\newtheorem{main}{Theorem}
\newtheorem{thm}{Theorem}[section]
\newtheorem{cor}[thm]{Corollary}
\newtheorem{lem}[thm]{Lemma}
\newtheorem{prop}[thm]{Proposition}
\newtheorem{thm*}{Theorem}
\theoremstyle{definition}
\newtheorem{fact}[thm]{Fact}
\theoremstyle{remark}
\newtheorem{remark}[thm]{Remark}
\numberwithin{equation}{section}
\newcommand{\Z}{\mathbb{Z}}
\newcommand{\R}{\mathbb{R}}
\newcommand{\Q}{\mathbb{Q}}
\newcommand{\C}{\mathbb{C}}
\newcommand{\N}{\mathbb{N}}
\newcommand{\Lang}{\mathcal{L}}
\newcommand{\cA
}{\mathcal{A}}
\newcommand{\cB}{\mathcal{B}}
\newcommand{\cM}{\mathcal{M}}
\newcommand{\cN}{\mathcal{N}}
\newcommand{\cR}{\mathcal{R}}
\newcommand{\cU}{\mathcal{U}}
\newcommand{\cV}{\mathcal{V}}
\renewcommand{\phi}{\varphi}
\newcommand{\scal}[1]{\left\langle #1 \right\rangle}
\newcommand{\norm}[1]{\left\lVert #1 \right\rVert}
\DeclareMathOperator{\qftp}{qftp}
\DeclareMathOperator{\tp}{tp}
\DeclareMathOperator{\Th}{Th}
\DeclareMathOperator{\re}{Re}
\DeclareMathOperator{\tr}{tr}
\DeclareMathOperator{\id}{id}
\DeclareMathOperator{\rank}{rank}
\DeclareMathOperator{\diag}{diag}
\newcommand{\cstar}{$\mathrm{C}^*$}
\begin{document}
	
	\title[Quantum expanders and  quantifier reduction]{Quantum expanders and quantifier reduction for tracial von Neumann algebras}
	\author[Farah, Jekel, and Pi]{Ilijas Farah, David Jekel, Jennifer Pi}

	\begin{abstract}
		We provide a complete characterization of theories of tracial von Neumann algebras that admit quantifier elimination. 
		We also show that the theory of a separable tracial von Neumann algebra $\cM$ is never model complete if its direct integral decomposition contains $\mathrm{II}_1$ factors $\mathcal{N}$ such that $M_2(\mathcal{N})$ embeds into an ultrapower of $\mathcal{N}$. The proof in the case of $\mathrm{II}_1$ factors uses an explicit construction based on random matrices and quantum expanders.
	\end{abstract}

	\address[Farah]{Department of Mathematics and Statistics,
		York University, \newline
		4700 Keele Street,
		Toronto, Ontario, Canada, M3J 1P3 \newline
		and Matemati\v cki Institut SANU \newline
		Kneza Mihaila 36, 11\,000 Beograd, p.p. 367 Serbia}
	\email{ifarah@yorku.ca}
	\urladdr{https://ifarah.mathstats.yorku.ca}
	\urladdr{https://orcid.org/0000-0001-7703-6931}
	
	\address[Jekel]{Fields Institute for Research in Mathematical Sciences, \newline
		222 College St, Toronto, Ontario, Canada, M5T 3J1}
	\email{djekel@fields.utoronto.ca}
	\address[Current address]{Department of Mathematical Sciences, University of Copenhagen \newline Universitetsparken 5, 2100 Copenhagen {\O}, Denmark}
	\email{daj@math.ku.dk}
	\urladdr{https://davidjekel.com}
	\urladdr{https://orcid.org/0000-0002-8580-5064}
	
	\address[Pi]{Department of Mathematics,
		University of California, Irvine, \newline
		410 Rowland Hall (Bldg.\# 400),
		Irvine, CA 92697-3875}
	\email{jspi@uci.edu}
	\address[Current address]{Mathematical Institute, University of Oxford, \newline 
		Radcliffe Observatory, Andrew Wiles Building, Woodstock Rd, Oxford OX2 6GG}
	\email{jennifer.pi@maths.ox.ac.uk}
	\urladdr{\href{https://sites.google.com/view/jpi314/home}{https://sites.google.com/view/jpi314/home}}
	\urladdr{https://orcid.org/0000-0003-1256-1086}
	
	% \thanks{IF and DJ were partially supported by the National Science and Engineering Research Council (Canada). JP was partially supported by the National Science Foundation (US), grant DMS-2054477.}
	
	\maketitle
	
	\section{Introduction}
	
	\subsection{On quantifier elimination}
	
	A common objection to the model-theoretic study of operator algebras \cite{GHS2013,FHS2013,FHS2014,FHS2014b} is that one needs to consider formulas with an arbitrarily large number of alternations of quantifiers. Since a typical human mind has difficulty parsing formulas such as $(\forall x_1)(\exists x_2)(\forall x_3)(\exists x_4)(\forall x_5)\psi(x_1,x_2,x_3,x_4 ,x_5)$ for a nontrivial relation $\psi$, it is natural to ask whether, for some theories, a given formula is equivalent to something simpler.  In particular, a theory $\mathrm{T}$ admits \emph{elimination of quantifiers} if every formula is equivalent modulo $\mathrm{T}$ to a quantifier-free formula (or in the metric setting, if every formula can be approximated by quantifier-free formulas).  
	
	Quantifier elimination has been isolated as a desirable property of theories from the very beginnings of model theory. Chang and Keisler \cite[\S 5.1]{chang1990model} wrote, ``Each time the method is applied to a new theory we must start from scratch in the proofs, because there are few opportunities to use general theorems about models. On the other hand, the method is extremely valuable when we want to beat a particular theory into the ground.’’ 
	Unfortunately--or fortunately, depending on one’s disposition--the only tracial von Neumann algebras whose theories admit quantifier elimination are of type I (i.e.\ a direct integral of matrix algebras), as the first author showed in \cite{Farah2023} (special cases were noted earlier in \cite{GHS2013}). Experts in operator algebras should not find it surprising that no II$_1$ factor has a theory that can be ``beaten into the ground''!
	
	Our first main result concerns which type I algebras admit quantifier elimination and confirms the conjecture from \cite{Farah2023}.
	
	\begin{main} \label{thm: main QE}
		Let $\cM = (M,\tau)$ be a WOT-separable tracial von Neumann algebra.  Then the following are equivalent.
		\begin{enumerate}[(1)]
			\item $\Th(\cM)$ admits quantifier elimination.
			\item $\cM$ is type I and any two projections $p$ and $q$ in $\cM$ with $\tau(p) = \tau(q)$ are conjugate by an automorphism of $\cM$.
		\end{enumerate}
	\end{main}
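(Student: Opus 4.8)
We would prove the two implications separately, the second being where the real work lies.

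\smallskip

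\noindent\emph{$(1)\Rightarrow(2)$.} That quantifier elimination forces $\cM$ to be type I is the main theorem of \cite{Farah2023}, so it remains to prove the conjugacy statement. If $p,q\in\cM$ are projections with $\tau(p)=\tau(q)$, then they have the same quantifier-free type, since the quantifier-free type of a single projection $x$ is determined by $\tau(x)$: using $x=x^*=x^2$, every $\tau(w(x,x^*))$ for a $*$-monomial $w$ equals $\tau(x)$ or $1$. Quantifier elimination then yields $\tp(p)=\tp(q)$, and the proof reduces to the following lemma: \emph{in a WOT-separable type I tracial von Neumann algebra $\cM$, two projections with the same complete type over $\emptyset$ are conjugate by an automorphism of $\cM$.} To prove it, I would use that $\cM$ decomposes as $\bigoplus_{n\ge1}M_n(L^\infty(X_n,\mu_n))$, that the central projection $z_n$ onto the $n$-homogeneous summand is definable over $\emptyset$, that $\mathrm{Aut}(\cM)$ preserves each summand and acts on it through a measure-preserving transformation of $(X_n,\mu_n)$ together with a unitary cocycle, and hence that the $\mathrm{Aut}(\cM)$-orbit of a projection $p$ is classified by the distributions of the pointwise-rank functions $r_p\colon X_n\to\{0,\dots,n\}$ together with atom combinatorics. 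The content of the lemma is that all of this information is recorded in $\tp(p)$: the numbers $\tau(pz_n)$, and more generally $\tau(pe)$ where $e$ is the central projection onto $\{r_p=k\}$ (definable from $p$), lie in $\tp(p)$, as does the data needed to compare atoms---for example whether $p$ dominates a minimal projection of a given trace.

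\smallskip

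\noindent\emph{$(2)\Rightarrow(1)$.} Assume $\cM$ is type I and every two projections of equal trace are conjugate by an automorphism. The first step is structural. Since $M_n(L^\infty([0,1]))$ for $n\ge2$ already fails the conjugacy condition---a rank-one projection on half of the unit interval and a rank-two projection on a quarter of it have equal trace but different rank distributions---hypothesis (2) forces every matrix amplification occurring in $\cM$ to have an \emph{atomic} base space; together with WOT-separability this means $\cM\cong\bigoplus_i M_{n_i}(\C)\oplus L^\infty(Y_0,\nu_0)$ with $Y_0$ atomless (possibly of mass $0$), the direct sum finite or countable. Each matrix summand is definable over $\emptyset$---by its trace-weight and its homogeneity degree, or, when several coincide, by their sum---and so is the atomless summand; hence, since these features as well as atomlessness are axiomatizable, every model of $\Th(\cM)$ has essentially the same shape, differing from $\cM$ only in the density of its atomless abelian part. (I would also record here the precise---somewhat intricate, number-theoretic---combinatorial condition on the weights $w_i$ and the masses of the atoms of $Y_0$ that is equivalent to (2), but for the back-and-forth the structural picture suffices.)

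\smallskip

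\noindent\emph{The back-and-forth, and the obstacle.} To establish quantifier elimination I would verify the standard criterion: given $\cN_1,\cN_2\models\Th(\cM)$, tuples $\bar a$ in $\cN_1$ and $\bar b$ in $\cN_2$ with $\qftp(\bar a)=\qftp(\bar b)$, and $c\in\cN_1$, there is $d$ in an elementary extension of $\cN_2$ with $\qftp(\bar bd)$ arbitrarily close to $\qftp(\bar ac)$. Passing to $\aleph_1$-saturated models and splitting along the $\emptyset$-definable summands of Step 1, the task decomposes. On each matrix summand and each atom the situation is rigid: the quantifier-free type of $\bar a$ already determines its compression there up to the (small) automorphism group of that summand, essentially by spectral theory and counting of multiplicities---and this is exactly the point at which condition (2), lifted from single projections to coherent countable families of spectral projections and relativized to corners (which are again algebras of the same kind), is used, so the back-and-forth cannot stall there. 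On the atomless abelian summand, which in every model of $\Th(\cM)$ is the unique atomless probability algebra of its density and is therefore homogeneous over separable subalgebras, the quantifier-free type over $W^*(\bar b)$ transported from $\qftp(\bar ac)$---via the quantifier-free isomorphism $W^*(\bar a)\cong W^*(\bar b)$ coming from $\qftp(\bar a)=\qftp(\bar b)$---is realized, producing $d$ there. Reassembling the summands yields the required $d$. I expect the main obstacle to be precisely this promotion of the single-projection hypothesis (2) to control over arbitrary tuples---i.e.\ showing that in these algebras $\qftp(\bar a)$ pins down the pair $W^*(\bar a)\hookrightarrow\cM$ up to $\mathrm{Aut}(\cM)$ by simultaneously conjugating a generating family of spectral projections---while keeping the rigid and flexible summands correctly synchronized; the structural classification of Step 1 is what makes this bookkeeping manageable, and the lemma of the forward direction is a comparatively soft auxiliary point.
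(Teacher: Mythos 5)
Your overall strategy matches the paper's: rule out diffuse matrix summands $M_n(\C)\otimes L^\infty[0,1]$, $n\geq 2$, observe that the remaining central summands are invariant under every automorphism (up to permuting one-dimensional summands of equal weight) and are definable, and then conjugate tuples summand by summand, with Specht's theorem handling the matrix blocks and quantifier elimination for atomless probability algebras handling the diffuse abelian block. The route differences are minor and acceptable: for $(1)\Rightarrow(2)$ you classify $\mathrm{Aut}(\cM)$-orbits of projections directly, where the paper instead uses that two projections of equal type are conjugate in an elementary extension and that atomic type I algebras admit no proper elementary extensions; for $(2)\Rightarrow(1)$ you run a back-and-forth, where the paper directly shows that tuples with equal quantifier-free types are approximately conjugate by automorphisms. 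Either variant would work if completed.

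However, there is a genuine gap at exactly the point you flag as ``the main obstacle,'' and flagging it is not the same as closing it. The entire content of $(2)\Rightarrow(1)$ is the claim that if $\bar a$ and $\bar b$ have the same quantifier-free type in $\cM$ and $p_j$ is the central projection onto a summand $M_{n_j}(\C)$ (or onto the group of one-dimensional summands of a fixed weight), then $p_j\bar a$ and $p_j\bar b$ have the same quantifier-free type in that summand. A priori the hypothesis only gives $\tau(w(\bar a))=\tau(w(\bar b))$ for words $w$, i.e., sums over all summands, and says nothing about the individual compressions. The paper extracts the compressed moments as follows: for a self-adjoint $*$-polynomial $f$ and Borel $E$, the spectral projections $1_E(f(\bar a))$ and $1_E(f(\bar b))$ have equal trace (equal quantifier-free types give equal spectral distributions), so hypothesis (2) produces an automorphism conjugating one to the other; that automorphism must fix $p_j$ (here one uses that no two matrix summands of dimension $\geq 2$ share a dimension and weight), whence $\tau(p_j 1_E(f(\bar a)))=\tau(p_j 1_E(f(\bar b)))$, and varying $E$ and $f$ gives $\tr_{n_j}(f(p_j\bar a))=\tr_{n_j}(f(p_j\bar b))$ for all $f$; only then does Specht's theorem apply. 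Your proposal asserts the conclusion (``the quantifier-free type of $\bar a$ already determines its compression\dots essentially by spectral theory and counting of multiplicities'') but supplies no mechanism for converting the single-projection hypothesis into equality of compressed traces of arbitrary words; moreover ``relativized to corners'' is not the right move, since hypothesis (2) concerns automorphisms of all of $\cM$, not of corners. Until this step is written out, the back-and-forth can stall precisely where you worried it might.
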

	
	Since the quantifier-free type of a projection is determined by its trace, condition (2) asserts that projections with the same quantifier-free type are conjugate by an automorphism.  We also give a more explicit description of when $\cM$ admits quantifier elimination in \S \ref{sec: QE tests}.  
	% We remark that the same algebra may or may not admit quantifier elimination, depending on the choice of the trace.  We will show in Propositions~\ref{P.dense} that quantifier elimination holds for generic theories of atomic tracial von Neumann algebras, while the theories of type I algebras themselves form a meager set.
	
	Special cases of tracial von Neumann algebras that admit quantifier elimination have been known for some time.  For instance, a diffuse commutative tracial von Neumann algebra corresponds to an atomless probability space, which Ben Ya'acov and Usvyatsov showed admit quantifier elimination in \cite[Example 4.3]{BYU2010} and \cite[Fact 2.10]{BY2012}. For further discussion, see \cite{BH2023} and \cite[\S 2.3]{JekelModelEntropy}.  The matrix algebras $M_n(\mathbb{C})$ also admit quantifier elimination thanks to the multivariate Specht's theorem \cite{Jing2015}.  Indeed, this result shows that two matrix tuples are unitarily conjugate if and only if they have the same $*$-moments under the trace, or equivalently, the same quantifier-free type.\footnote{However, $M_n(\mathbb{C})$ does not admit quantifier elimination as a $\mathrm{C}^*$-algebra (i.e.\ without the trace) since two nontrivial projections always have the same quantifier-free type but may not have the same type.}
	
	The question of quantifier elimination for $\mathrm{II}_1$ factors was studied in \cite[\S 2]{GHS2013}, which showed that the hyperfinite factor $\cR$ does not admit quantifier elimination, and this argument was observed in \cite{GH2023} to generalize to McDuff factors.  Furthermore, the results of \cite[\S 3]{GHS2013} imply that Connes-embeddable factors not elementarily equivalent to $\cR$ are not model complete, hence also do not admit quantifier elimination.  The first author \cite{Farah2023} extended this argument to refute quantifier elimination for $\mathrm{II}_1$ factors in general, and showed that tracial von Neumann algebras with a type $\mathrm{II}_1$ summand never admit quantifier elimination.  We also give another argument for this fact in Remark \ref{rem: alternate QE proof}.
	
	\subsection{On model completeness}
	
	Model completeness, introduced by Abraham Robinson, can be viewed as a poor person's version of quantifier elimination. A theory is \emph{model complete} if every embedding between its models is elementary.  While quantifier elimination means that every formula can be approximated by quantifier-free formulas, model completeness is equivalent to every formula being approximable by existential formulas (see \S \ref{subsec: QE and MC background}).  Thus, both quantifier elimination and model completeness are forms of quantifier reduction.
	
	Another characterization of model completeness for $\Th(\cM)$, under the assumption of the Continuum Hypothesis, is that for every separable $\cA$ and $\cB$ elementarily equivalent to $\cM$, every embedding $\cA \to \cB$ extends to an isomorphism $\cA^{\cU} \to \cB^{\cU}$ for some ultrafilter $\cU$ \cite[Corollary~16.6.5]{Fa:STCstar}.\footnote{The use of Continuum Hypothesis is, while necessary for this formulation, innocuous and removable at the expense of having a more complicated (but equally useful) formulation in terms of a back-and-forth system of partial isomorphisms between separable subalgebras of $\mathcal{A}$ and $\mathcal{B}$ that is $\sigma$-complete (see \cite[Theorem~16.6.4]{Fa:STCstar}).}  Operator algebraists will recognize this property as a generalization of the property of the hyperfinite II$_1$ factor $\cR$, that every embedding of $\cR$ into its ultrapower is unitarily equivalent to the diagonal embedding (the latter is elementary by \L o\' s's Theorem). By a standard ultrapower argument, this implies that every embedding of $\cR$ into a model of its theory, $\Th(\cR)$,  is elementary; this property was studied in \cite{AGKE2022} under the name of ``generalized Jung property.''  Note, however, that every embedding of $\cR$ into its ultrapower being elementary does not mean that $\mathcal{R}$ is model complete, since model completeness would require that every $\cM$ elementarily equivalent to $\cR$ also has the same property.
	
	Among tracial von Neumann algebras, type I algebras are model complete \cite{Farah2023} and algebras with a type II$_1$ summand are generally not model complete.  Indeed, the only possible model complete theory for Connes-embeddable $\mathrm{II}_1$ factors is $\Th(\cR)$ \cite[Proposition 3.2]{GHS2013}.  Moreover, \cite[Corollary 3.4]{GHS2013} showed that if the Connes embedding problem has a positive solution, then there is no model complete theory of a $\mathrm{II}_1$ factor; however, a negative solution of the Connes embedding problem was announced in \cite{JNVWY2020}, so the question of characterizing model complete theories of II$_1$ factors was still open.  It was conjectured in \cite{Farah2023} that tracial von Neumann algebras with a nontrivial type $\mathrm{II}_1$ summand are never model complete, and our second main theorem establishes this conjecture under a mild additional hypothesis that the $\mathrm{II}_1$ factors in the decomposition satisfy that $M_2(\mathcal{M})$ approximately embeds into $\mathcal{M}$.
	
	\begin{main} \label{thm: main MC}
		If $\cM$ is a $\mathrm{II}_1$ factor such that $M_2(\cM)$ embeds into $\cM^{\cU}$ for some ultrafilter $\cU$, then $\Th(\cM)$ is not model complete.
		
		More generally, let $\cM$ be a separable tracial von Neumann algebra with direct integral decomposition $\int_\Omega^{\oplus} (\cM_\omega,\tau_\omega)\,d\omega$.  Suppose that on a positive measure set, $\cM_\omega$ is a $\mathrm{II}_1$ factor such that $M_2(\cM_\omega)$ embeds into $\cM_\omega^{\cU}$ for some ultrafilter $\cU$.  Then $\Th(\cM)$ is not model complete.
	\end{main}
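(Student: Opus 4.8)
The plan is to refute model completeness by exhibiting two models $\cN\subseteq\cN'$ of $\Th(\cM)$ with $\cN$ not existentially closed in $\cN'$, the relevant extension being built out of random matrices. First I would dispose of the direct-integral reduction. Since the centre of a tracial von Neumann algebra is a definable set, after cutting $\cM$ down by a central projection one may assume $\cM_\omega$ is a $\mathrm{II}_1$ factor with $M_2(\cM_\omega)\hookrightarrow\cM_\omega^{\cU}$ for a.e.\ $\omega$; using the direct-integral model theory of \cite{Farah2023} one then isolates a factor fibre $\cM_{\omega_0}$, runs the construction below for it, and integrates the resulting formula and embedding back up to $\Th(\cM)$ (one must check that these can be chosen uniformly/measurably in $\omega$). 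So from now on $\cM$ is a separable $\mathrm{II}_1$ factor with $M_2(\cM)\hookrightarrow\cM^{\cU}$.

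Iterating the hypothesis gives more room: $M_{2^m}(\cM)=M_2(M_{2^{m-1}}(\cM))\hookrightarrow M_2(\cM^{\cU})=M_2(\cM)^{\cU}\hookrightarrow(\cM^{\cU})^{\cU}$, and since $M_{2^m}(\cM)$ is separable while $\cM^{\cU}$ is countably saturated with $\cM^{\cU}\equiv(\cM^{\cU})^{\cU}$, one gets $M_{2^m}(\cM)\hookrightarrow\cM^{\cU}$ for every $m$; a further saturation argument yields $\cR\otimes\cM\hookrightarrow\cM^{\cU}$ (von Neumann tensor product), and even copies of matricial ultraproducts inside $\cM^{\cU}$. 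Now $\cM^{\cU}\models\Th(\cM)$, and I want an embedding $\iota\colon\cM\hookrightarrow\cM^{\cU}$ (one that is, necessarily, not elementary) together with an existential formula $\theta$ and a generating tuple $\bar a$ of $\cM$ such that $\theta^{\cM}(\bar a)>0$ while $\theta^{\cM^{\cU}}(\iota(\bar a))=0$. Since $\iota(\cM)\cong\cM$ is a model of $\Th(\cM)$ contained in $\cM^{\cU}$ and existential formulas cannot increase under an embedding, this shows $\iota(\cM)$ is not existentially closed in $\cM^{\cU}$, hence $\Th(\cM)$ is not model complete.

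If $\cM$ is not McDuff this can be arranged without quantum expanders: take $\theta(\bar x)$ to measure the $\|\cdot\|_2$-distance from the relative commutant of $\bar x$ to containing a copy of $M_2(\mathbb{C})$. A generating tuple of a non-McDuff factor is rigid for this $\theta$ (the relative commutant of the diagonal copy of $\cM$ in $\cM^{\cU}$ is abelian), while the embedding $\iota\colon\cM\hookrightarrow\cM\otimes\cR\hookrightarrow\cM^{\cU}$ has a copy of $\cR$, hence of $M_2(\mathbb{C})$, in the relative commutant of $\iota(\cM)$, so $\theta$ vanishes on $\iota(\bar a)$. (This case needs no quantum expanders and overlaps with the Connes-embeddable situation analysed in \cite[\S3]{GHS2013}.) The quantum-expander construction is for the remaining case, $\cM$ McDuff---in particular $\cM=\cR$---where the relative-commutant formula is useless, since the diagonal copy of a McDuff factor already has $M_2(\mathbb{C})$ in its relative commutant. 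Here one invokes the existence of quantum expanders on matrix algebras---unitary tuples $(w_1^{(n)},\dots,w_k^{(n)})\in M_n(\mathbb{C})$ with $\|\tfrac1k\sum_i w_i^{(n)}x(w_i^{(n)})^*-\tau(x)1\|_2\le\varepsilon\|x\|_2$, with $\varepsilon$ independent of $n$---and the complementary quantum Alon--Boppana lower bound, to build a subtler existential $\theta$: roughly, $\theta(\bar x)$ records how efficiently $\bar x$ can be averaged towards its trace by $k$ unitary conjugations whose conjugating unitaries are constrained to a finite-dimensional subalgebra. Quantum expanders show that inside a matricial ultraproduct (hence inside $\cM^{\cU}$, using the room above) an appropriately placed copy $\iota(\cM)$ can be averaged this efficiently, giving $\theta^{\cM^{\cU}}(\iota(\bar a))=0$; the point is that a generating tuple of the $\mathrm{II}_1$ factor $\cM$ itself cannot, so $\theta^{\cM}(\bar a)>0$.

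The main obstacle is precisely this last rigidity: showing that a generating tuple of $\cM$ cannot be averaged as efficiently within $\cM$ as within $\cM^{\cU}$. This is where the genuinely quantum, random nature of the expanders is essential---classical/commutative expanders control only the ``diagonal'' directions, and one needs both Hastings's existence theorem and a matching lower bound, presumably via concentration of measure for Haar-random unitaries. A second technical point is turning ``quantum expander'' into an honest first-order formula: the language of tracial von Neumann algebras quantifies over operator-norm balls, whereas the channel estimates are in $\|\cdot\|_2$, so one must iterate the channel and/or truncate. Finally one has to verify the uniformity required for the direct-integral reduction in the general statement.
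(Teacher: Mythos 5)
Your logical frame (produce an embedding between models of $\Th(\cM)$ under which some existential formula strictly drops) is sound, and your treatment of the non-McDuff case is essentially correct: it is the known relative-commutant argument of \cite[\S 2--3]{GHS2013}, using $M_2(\cM)\hookrightarrow\cM^{\cU}\Rightarrow\cM\otimes\cR\hookrightarrow\cM^{\cU}$ (which the paper verifies in \S\ref{sec: amplification}). But the McDuff case is precisely where the theorem has new content --- it covers $\cR$ itself and, more to the point, McDuff factors not elementarily equivalent to $\cR$ and possibly not Connes-embeddable --- and there your proposal has a genuine gap that you yourself flag as ``the main obstacle.'' The formula you sketch (``how efficiently $\bar x$ can be averaged towards its trace by $k$ unitary conjugations whose conjugating unitaries are constrained to a finite-dimensional subalgebra'') does not appear to separate anything: averaging over unitaries of a finite-dimensional subalgebra $F$ can at best project onto $F'\cap\cM$, which in any $\mathrm{II}_1$ factor (including $\cM^{\cU}$) is again type $\mathrm{II}_1$, so the quantity is bounded away from $0$ in \emph{both} $\cM$ and $\cM^{\cU}$ for the same reason; and if you drop the finite-dimensionality constraint, Dixmier-type averaging makes it vanish in both. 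No rigidity statement of the required asymmetric form is proved or even plausibly formulated, so the proof does not go through.

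For contrast, the paper does not fix a generating tuple of $\cM$ and push it through one embedding. It constructs two triples of unitaries in $\cM^{\cU}$, namely $\mathbf{X}=(U_1\otimes 1,U_2\otimes 1,U_3\otimes 1)$ and $\mathbf{Y}=((U_1\oplus U_1)\otimes 1,(U_2\oplus U_2)\otimes 1,(U_3\oplus U_4)\otimes 1)$ built from independent Haar unitaries, and applies the type-space criterion of Lemma~\ref{lem: type MC}: concentration of measure plus the hypothesis $M_2(\cM)\hookrightarrow\cM^{\cU}$ give $\phi(\mathbf{Y})\le\phi(\mathbf{X})$ for every inf-formula (because $\mathbf{Y}$ is, up to the embedding, $\mathbf{X}\oplus\tilde{\mathbf{X}}$ with $\tilde{\mathbf{X}}$ of the same type), while Hastings's quantum expander theorem is used as a \emph{spectral gap} estimate to compute $\{\mathbf{X}\}'\cap\cM^{\cU}$ and $\{\mathbf{Y}\}'\cap\cM^{\cU}$ exactly; the former is a factor and the latter is not, so the types differ. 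This factoriality-of-the-relative-commutant dichotomy, made quantitative by the expander bound so that it is captured by a two-quantifier formula, is the idea your proposal is missing. Separately, your reduction of the general direct-integral statement by ``isolating a factor fibre'' does not work when the fibre measure is diffuse (there is then no factor direct summand); the paper needs a separate argument there (Lemma~\ref{lem: diffuse MC}, using $\cN=L^\infty[0,1]\otimes\cM\equiv\cM$, $\cN\oplus\cN\cong\cN$, and a non-central image of a central projection), together with the fact that model completeness passes to direct summands (Lemma~\ref{lem: direct sum}).
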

	
	The assumption that $M_2(\cM)$ embeds into an ultrapower of $\cM$ is closely related to \cite[Proposition 4.17]{GH2017}, and is immediate in several cases of interest.  For instance if $\cM$ is Connes embeddable this holds because $M_2(\cM)$ embeds into $\cR^{\cU}$ and hence into $M^{\cU}$ (of course, Theorem \ref{thm: main MC} in the Connes embeddable case also follows from \cite{GHS2013}). Another case where this condition is automatic is if $\cM$ is existentially closed in the class of $\mathrm{II}_1$ factors, since by definition there is an embedding of $M_2(\cM)$ into $\cM^{\cU}$ extending the diagonal embedding.  The condition also holds automatically if $\cM$ is McDuff, and more generally if its fundamental group is nontrivial; see \S \ref{sec: amplification}. 
	Although there are $\mathrm{II}_1$ factors such that $M_2(\cM)$ does not embed into $\cM$ \cite[Theorem C]{PopaVaes2022}, it is unknown at this point whether there exists any $\mathrm{II}_1$ factor such that $M_2(\cM)$ does not embed into $\cM^{\cU}$.  Since such an object would not be Connes-embeddable, it would no doubt be difficult to construct.  In \S \ref{sec: amplification}, we will discuss several conditions equivalent to $M_2(\cM)$ embedding into $\cM^{\cU}$.
	
	The proof of Theorem \ref{thm: main MC} is divided into two parts.  In the case of a $\mathrm{II}_1$ factor, we use a random matrix construction to create two tuples with similar behavior for their one-quantifier types, while their full types are distinguished by one having factorial commutant when the other does not.  In fact, this approach gives explicit sentences distinguishing their types (see \S \ref{sec: MC factor proof conclusion}).  The matrix construction shares some common ideas with \cite{Farah2023}, but also uses more substantial random matrix results such as Hastings's quantum expander theorem \cite{Hastings2007} and concentration of measure for random unitaries.  Thus, this is a first application of the combination of model theory and random matrix theory envisaged in \cite[\S 6]{JekelModelEntropy}. Already in \cite[\S 5]{Fa:Logic} it was predicted that deeper analysis of model theory of II$_1$ factors will necessarily involve free probability. 
	
	The extension to general tracial von Neumann algebras then requires two cases.  If the von Neumann algebra is a direct integral over a diffuse space, with fibers $M_\omega$, there is a direct argument to show the failure of model completeness when $M_2(\cM_\omega)$ embeds into $\cM_\omega^{\cU}$ (Lemma \ref{lem: diffuse MC}).  The remaining piece is the observation that if $\cM_1 \oplus \cM_2$ is model complete, then both $\cM_1$ and $\cM_2$ are model complete (Lemma \ref{lem: direct sum}).
	
	\subsection{Organization of this paper}
	
	In \S \ref{sec: preliminaries}, we recall background on tracial von Neumann algebras and continuous model theory, including specific tests for quantifier elimination and model completeness.
	In \S \ref{sec: QE proof}, we prove Theorem \ref{thm: main QE}, and in \S \ref{sec: QE tests}, we give several more explicit tests for quantifier elimination. In \S \ref{sec: MC factor proof}, we prove Theorem \ref{thm: main MC} in the case of $\mathrm{II}_1$ factors.  Then in \S \ref{sec: MC general case}, we prove the general case, relying on the fact that model completeness passes to direct summands (\S \ref{sec: MC direct sums}).  In the final section we give closing remarks:  in \S \ref{sec: topological} we discuss topological properties of theories of von Neumann algebras that have quantifier elimination or model completeness,  \S \ref{sec: amplification} is about the condition of $M_2(\cM)$ embedding to $\cM^{\cU}$, and \S \ref{sec: nontracial} is about quantifier elimination and model completeness in the non-tracial setting.
	
	\subsection*{Acknowledgements}
	
	We are grateful to the Fields Institute for hosting all three authors during the Thematic Program on Operator Algebras in Fall 2023 (IF as an organizer, DJ as a postdoc, and JP as a visitor).  We are grateful to Adrian Ioana for suggesting an argument that simplified the proof of Lemma~\ref{lem: second distance estimate}, and Ben Hayes for discussion of alternative proofs.  We thank Brent Nelson, Narutaka Ozawa, Isaac Goldbring, and Hiroshi Ando for discussions about type $\mathrm{III}$ factors.
	
	\section{Preliminaries} \label{sec: preliminaries}
	
	\subsection{Tracial von Neumann algebras}
	
	We assume familiarity with tracial von Neumann algebras, and recommend \cite{Ioana2023} for an introduction to the topic, as well as the standard reference books \cite{Blackadar2006,Dixmier1969,KadisonRingroseI,Sakai1971,TakesakiI,Zhu1993}.  In particular, we use the following notions and conventions:
	\begin{itemize}
		\item A \emph{tracial von Neumann algebra} is a finite von Neumann algebra with a specified tracial state. 
		\item The tracial state on $\cM$ will usually be denoted by~$\tau$ or $\tau_{\cM}$.
		\item The normalized trace on $M_n(\C)$ will be denoted by $\tr_n$.
		\item We also write $\norm{x}_2 = \tau(x^*x)^{1/2}$ when $x$ is an element of a tracial von Neumann algebra, and in particular when $x$ is a matrix, $\norm{x}_2 = \tr_n(x^*x)^{1/2}$ is the normalized Hilbert-Schmidt norm.
		\item The completion of $\cM$ with respect to $2$-norm is denoted $L^2(\cM)$.
		\item Inclusions and embeddings of tracial von Neumann algebras $\cN \subseteq \cM$ are assumed to be trace-preserving $*$-homomorphisms.
		\item If $\cN \subseteq \cM$, we denote by $E_{\cN}: \cM \to \cN$ the canonical conditional expectation; there is a unique conditional expectation that preserves the trace, and it is the restriction of the orthogonal projection $L^2(\cM) \to L^2(\cN)$.
	\end{itemize}
	
	\subsection{Continuous model theory} \label{sec: model theory prelims}
	
	We also assume some familiarity with continuous model theory, specifically model theory for metric structures; see e.g.\ \cite{BYBHU2008,Hart2023}.
	In particular:
	\begin{itemize}
		\item The structures under consideration are metric spaces, and the metric $d$ is one of the symbols in the language.  The structure can have multiple sorts; for instance, for a von Neumann algebra, there is one sort for each operator norm ball.
		
		\item Relation symbols are $\R$-valued, so in particular formulas will take values in $\R$ rather than evaluating to true/false.  The relation symbols and function symbols are required to be uniformly continuous across all models.
		
		\item Formulas are created in the usual recursive fashion with connectives from classical model theory replaced by continuous functions on $\R$, and the quantifiers $\forall$ and $\exists$ replaced with $\sup$ and $\inf$ (over appropriate bounded subsets of the von Neumann algebra). 
		
		\item For a language $\Lang$, and an $\Lang$-structure $\cM$, by the \textit{theory} of $\cM$ (denoted $\mathrm{Th}(\cM)$) we mean the set of all $\Lang$-sentences $\phi$ such that $\phi^\cM = 0$, except in \S \ref{sec: topological}, where it is more convenient to consider the theory as a bounded functional on the algebra of all formulas into $\mathbb R$. 
		
		\item For an $n$-tuple $\mathbf{a}$ coming from a structure $\cM$, the \textit{type} of $\mathbf{a}$ is the map $\tp^{\cM}(\mathbf{a}): \phi \mapsto \phi^\cM(\mathbf{a})$ which assigns to each $\Lang$-formula $\phi(x_1, \ldots, x_n)$ the value of $\phi^\cM(\mathbf{a})$. More generally, we say that any map $\mu$ which assigns a value $\phi(\mu) \in \R$ to each $\Lang$-sentence $\phi$ in $n$-variables is an $n$-type. For any fixed $n$, the space of all $n$-types is denoted $\mathbb{S}_n$.  Moreover, for a theory $\mathrm{T}$,  by $\mathbb{S}_n(\mathrm{T})$ we denote  the space of $n$-types that arise in models of $\mathrm{T}$.
		
		\item Quantifier-free formulas are those constructed recursively using connectives but no quantifiers.  The quantifier-free type $\qftp^{\cM}(\mathbf{a})$ is the restriction of $\tp^{\cM}(\mathbf{a})$ to quantifier-free formulas.
		
		\item The set $\mathbb{S}_n(\mathrm{T})$ is equipped with the \emph{logic topology}, which is the topology of pointwise convergence on $\Lang$-formulas, i.e. the weak$^*$-topology.  This makes $\mathbb{S}_n(\mathrm{T})$ into a compact Hausdorff space. Dually, each formula $\phi$ defines a continuous function on $\mathbb{S}_n(\mathrm{T})$.
		
		\item For any cardinal $\kappa$, we recall that a structure $\cM$ is \emph{$\kappa$-saturated} if every consistent type with parameters from a set $A \subseteq M$ with $|A| \leq \kappa$ is realized by some tuple $\mathbf{a}$ from $\cM$. (For operator algebraists, we note that a type is consistent with the theory of $\cM$ if it is in the weak$^*$-closure of the maps $\tp^{\cM}(\mathbf{a})$ for tuples $\mathbf{a} \in \cM$. Thus, countable ultraproducts of structures are countably saturated).
	\end{itemize}
	
	The language for tracial von Neumann algebras as metric structures was developed in \cite{FHS2014}, and other useful references include \cite[\S 2]{JekelCoveringEntropy} and \cite{GH2023}.  The sorts in this language are operator-norm balls, the functions are addition, multiplication, scalar multiplication, and adjoint, and the relation symbols are $\re \tr$ and the distance $d(x,y) = \norm{x - y}_2$.  All ultraproducts considered in this work are tracial; see \cite[\S 2.2]{FHS2014b} for a formal construction of tracial ultraproducts, and \cite[\S 16]{Fa:STCstar} or \cite[\S 2, \S 6]{Hart2023}  for more background on ultrafilters and ultraproducts in continuous model theory.
	
	\subsection{Definable Sets} \label{sec: definable}
	
	Lastly, in many arguments below we will need the notion of a definable set. These are sets that we are able to quantify over, without formally being a part of our language; see for instance \cite[Theorem 9.17]{BYBHU2008} and \cite[Definition~3.2.3 and Lemma~3.2.5]{FHLRTVW2021}. In particular, when $a$ is a definable element in some structure, then we can refer to it as if it were an interpretation of a  constant symbol in our language. We will use the following characterization of definable sets over a subset $A$ relative to a structure $\cM$, and refer the reader to \cite[\S 9]{BYBHU2008}, \cite[\S 2]{Goldbring2023spectralgap}, and \cite[\S 3]{FHLRTVW2021} for more information on definability.
	
	\begin{fact}
		Fix a structure $\cM$ and some subset $A \subseteq M$. Suppose $Z \subseteq M^n$ is a closed subset. Then $Z$ is a definable set in $\cM$ over $A$ if and only if for every $\epsilon > 0$, there exist $\delta > 0$ and a formula $\phi(x_1, \ldots, x_n)$, possibly using parameters from $A$, such that for any $\mathbf{x} \in M^n$,
		\[
		\phi^\cM(\mathbf{x}) < \delta \implies d(\mathbf{x}, Z) \leq \epsilon.
		\]
	\end{fact}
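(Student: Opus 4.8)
The plan is to work directly from the standard definition that a closed set $Z \subseteq M^n$ is definable over $A$ precisely when the distance predicate $\mathbf{x} \mapsto d(\mathbf{x},Z)$ is a definable predicate over $A$, i.e.\ a limit --- uniform on each sort --- of $\Lang$-formulas with parameters from $A$ (the formulation underlying \cite[\S 9]{BYBHU2008} and \cite[\S 3]{FHLRTVW2021}). Since the language has a separate sort for each operator-norm ball, I would fix once and for all an ambient ball $S$ large enough to contain $Z$ and all inputs at which we evaluate, put $K := \operatorname{diam}(S) < \infty$, and carry out every quantification and every uniform approximation over $S$, repeating the argument for each such $S$. (We may assume $Z \neq \emptyset$.)

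For the forward implication, assume $Z$ is definable and choose formulas $\psi_k$ over $A$ with $\psi_k^{\cM} \to d(\cdot, Z)$ uniformly on $S$. Given $\epsilon > 0$, take $k$ with $\sup_{\mathbf{x}\in S}|\psi_k^{\cM}(\mathbf{x}) - d(\mathbf{x},Z)| < \epsilon/2$ and set $\phi := \max(\psi_k, 0)$ and $\delta := \epsilon/2$. Then $\phi^{\cM}(\mathbf{x}) < \delta$ gives $d(\mathbf{x},Z) < \psi_k^{\cM}(\mathbf{x}) + \epsilon/2 \le \phi^{\cM}(\mathbf{x}) + \epsilon/2 < \epsilon$; note also that $\phi^{\cM} < \epsilon/2$ on $Z$, a point I will want in the converse.

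For the converse I would use the usual ``$\inf$-trick''. Fix $\epsilon>0$ and let $\delta = \delta_\epsilon > 0$ and the formula $\phi = \phi_\epsilon \ge 0$ over $A$ be as provided, assuming in addition that $\phi^{\cM}$ vanishes on $Z$ (see the last paragraph). Choose a continuous nondecreasing $h \colon [0,\infty)\to[0,K]$ with $h \equiv 0$ on $[0,\delta/2]$ and $h \equiv K$ on $[\delta,\infty)$, and form the bounded $\Lang$-formula over $A$
\[
Q_\epsilon(\mathbf{x}) \;:=\; \inf_{\mathbf{y}\in S}\Big( d(\mathbf{x},\mathbf{y}) + h\big(\phi^{\cM}(\mathbf{y})\big)\Big).
\]
Plugging $\mathbf{y} = \mathbf{z}$ for $\mathbf{z}\in Z$, where $h(\phi^{\cM}(\mathbf{z})) = 0$, yields $Q_\epsilon^{\cM}(\mathbf{x}) \le d(\mathbf{x},Z)$. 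In the other direction, for each $\mathbf{y}\in S$: if $\phi^{\cM}(\mathbf{y}) < \delta$ then the hypothesis gives $d(\mathbf{y},Z)\le\epsilon$, so $d(\mathbf{x},\mathbf{y}) \ge d(\mathbf{x},Z) - d(\mathbf{y},Z) \ge d(\mathbf{x},Z)-\epsilon$; and if $\phi^{\cM}(\mathbf{y})\ge\delta$ then $h(\phi^{\cM}(\mathbf{y})) = K \ge d(\mathbf{x},Z)$. Hence the bracket is always $\ge d(\mathbf{x},Z)-\epsilon$, so $Q_\epsilon^{\cM}(\mathbf{x}) \ge d(\mathbf{x},Z)-\epsilon$. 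Thus $\sup_{\mathbf{x}\in S}|Q_\epsilon^{\cM}(\mathbf{x}) - d(\mathbf{x},Z)| \le \epsilon$, and letting $\epsilon\to 0$ (and then varying $S$) exhibits $d(\cdot,Z)$ as a definable predicate over $A$.

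The step I expect to be the real content --- and which the criterion as displayed leaves implicit --- is the need, in the converse, to take the approximating formula $\phi_\epsilon$ to (approximately) vanish on $Z$ and not merely to satisfy the one-sided implication: the implication ``$\phi^{\cM}(\mathbf{x}) < \delta \implies d(\mathbf{x},Z)\le\epsilon$'' alone is satisfied vacuously by any $\phi$ bounded away from $0$, and one then cannot recover the upper estimate on $Q_\epsilon$. I would therefore either build ``$\phi^{\cM}|_Z \le \epsilon$'' into the hypothesis or invoke the equivalent forms of the definability criterion in \cite[\S 9]{BYBHU2008} and \cite[\S 3]{FHLRTVW2021}; with that in hand, everything else --- the $\inf$-trick and the two triangle-inequality estimates --- is routine, the only remaining care being the many-sorted bookkeeping described at the outset.
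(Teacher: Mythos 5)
The paper does not actually prove this Fact --- it is quoted from \cite[Theorem 9.17]{BYBHU2008} and \cite[Lemma 3.2.5]{FHLRTVW2021} --- so there is no internal proof to compare against. Your argument is the standard one underlying those sources (bound the distance predicate from above by plugging points of $Z$ into the $\inf$, and from below by the two-case triangle-inequality estimate with the cutoff $h$), and it is correct as written, including the multi-sorted bookkeeping and the forward direction.

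Your closing observation is moreover a genuine and correct catch rather than a pedantic one: as displayed, the criterion is satisfied vacuously by the constant formula $\phi \equiv 1$ with $\delta = 1/2$ for \emph{every} closed $Z$, so the ``if'' direction is false as literally stated. The cited sources include the missing clause (that $\phi$ vanishes, or is at most $\delta$, on $Z$), and that clause is exactly what makes your upper estimate $Q_\epsilon^{\cM} \le d(\cdot,Z)$ go through. The omission is harmless for the paper, since every formula it feeds into this criterion (e.g.\ the spectral-gap expressions $\sum_j \norm{[X_j,\cdot]}_2^2$, or the predicates witnessing definability of $S_k$) does vanish on the set in question; but your proof correctly identifies that the hypothesis as printed must be strengthened before the converse can be proved at all.
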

	If we say a set is definable in $\cM$, then we mean it is definable in $\cM$ over the empty set.
	
	\subsection{Quantifier elimination and model completeness} \label{subsec: QE and MC background}
	
	Recall that a theory $\mathrm{T}$ is said to admit \emph{quantifier elimination} if every $\Lang$-formula $\phi$ can be approximated uniformly across all models of $\mathrm{T}$ by quantifier-free $\Lang$-formulas.
	We will use the following characterization of quantifier elimination in terms of types.  A closely related statement for positive bounded logic is given in \cite[Proposition 14.21]{HIKO2003}.  The statement given here follows for instance from the proof of \cite[Lemma 2.14]{JekelModelEntropy}.
	
	\begin{lem} \label{lem: type QE}
		Let $\mathrm{T}$ be an $\Lang$-theory.  Then the following are equivalent:
		\begin{enumerate}
			\item $\mathrm{T}$ admits quantifier elimination.
			\item For every $n$ and every $\mu, \nu \in \mathbb{S}_n(\mathrm{T})$, if $\mu$ and $\nu$ agree on quantifier-free formulas, then $\mu = \nu$.
		\end{enumerate}
	\end{lem}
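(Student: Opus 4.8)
The plan is to prove (1) $\Rightarrow$ (2) by a direct approximation argument and (2) $\Rightarrow$ (1) by the Stone--Weierstrass theorem applied to the type space $\mathbb{S}_n(\mathrm{T})$. For (1) $\Rightarrow$ (2), assume $\mathrm{T}$ admits quantifier elimination and let $\mu,\nu \in \mathbb{S}_n(\mathrm{T})$ agree on all quantifier-free $n$-formulas. Fix an $\Lang$-formula $\phi(x_1,\dots,x_n)$ and $\epsilon > 0$, and pick a quantifier-free $\psi$ with $|\phi^{\cN}(\mathbf{b}) - \psi^{\cN}(\mathbf{b})| \le \epsilon$ for every model $\cN$ of $\mathrm{T}$ and every $n$-tuple $\mathbf{b}$ from $\cN$. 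Since $\mu$ and $\nu$ are realized in models of $\mathrm{T}$, this yields $|\phi(\mu) - \psi(\mu)| \le \epsilon$ and $|\phi(\nu) - \psi(\nu)| \le \epsilon$, while $\psi(\mu) = \psi(\nu)$ by hypothesis; hence $|\phi(\mu) - \phi(\nu)| \le 2\epsilon$. As $\epsilon$ and then $\phi$ were arbitrary, $\mu = \nu$.

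For (2) $\Rightarrow$ (1): by the conventions recalled in \S\ref{sec: model theory prelims}, $\mathbb{S}_n(\mathrm{T})$ with the logic topology is compact Hausdorff, and each $\Lang$-formula $\phi(x_1,\dots,x_n)$ defines a continuous real-valued function $\hat\phi$ on it; moreover every point of $\mathbb{S}_n(\mathrm{T})$ is by definition realized in a model of $\mathrm{T}$, so uniform approximability of $\phi$ by quantifier-free formulas across all models of $\mathrm{T}$ is equivalent to $\hat\phi$ lying in the uniform closure of $\mathcal{A}_n := \{\hat\psi : \psi \text{ a quantifier-free } n\text{-formula}\}$ inside $C(\mathbb{S}_n(\mathrm{T}))$. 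The set $\mathcal{A}_n$ contains all constant functions and is closed under post-composition with the continuous connectives of continuous logic --- in particular under addition, scalar multiplication, products, and the lattice operations $\max$ and $\min$ --- so it is a unital subalgebra (equivalently, a sublattice) of $C(\mathbb{S}_n(\mathrm{T}))$, all formulas being bounded. Hypothesis (2) says precisely that $\mathcal{A}_n$ separates the points of $\mathbb{S}_n(\mathrm{T})$: distinct types disagree on some quantifier-free formula. The Stone--Weierstrass theorem then forces $\overline{\mathcal{A}_n} = C(\mathbb{S}_n(\mathrm{T}))$, so each $\hat\phi$ is a uniform limit of quantifier-free formulas; that is, $\mathrm{T}$ admits quantifier elimination.

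The argument is short, and the only point deserving care --- the step I would write out in detail --- is the translation in the second paragraph between syntactic uniform approximation ``modulo $\mathrm{T}$'' (across all models and all tuples) and uniform approximation in $C(\mathbb{S}_n(\mathrm{T}))$, together with the verification that $\mathcal{A}_n$ is genuinely a unital subalgebra/sublattice, so that Stone--Weierstrass applies. This hinges on the standard convention that the connectives of continuous logic are (uniformly dense in) the continuous functions, so that $\max$, $\min$, and the affine maps are available, and on the fact that every formula over a language with bounded sorts takes values in a fixed compact interval. Both facts are routine in this setting and are exactly what underlies \cite[Lemma 2.14]{JekelModelEntropy} and \cite[Proposition 14.21]{HIKO2003}.
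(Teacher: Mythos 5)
Your proof is correct and follows essentially the route the paper intends: the paper does not write out a proof of this lemma but defers to the proof of \cite[Lemma 2.14]{JekelModelEntropy}, and its appendix proof of the model-completeness analogue (Lemma \ref{lem: type MC 2}) is exactly the harder, hand-rolled Urysohn/Stone--Weierstrass version of your argument, needed there because inf-formulas, unlike quantifier-free formulas, do not form a subalgebra of $C(\mathbb{S}_n(\mathrm{T}))$. Your direct application of Stone--Weierstrass to the unital subalgebra generated by quantifier-free formulas, together with the translation between uniform approximation over models of $\mathrm{T}$ and uniform approximation on the compact type space, is the standard and complete argument here.
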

	
	There is an analogous characterization for model completeness, which can be regarded as a folklore result since it closely parallels what happens in discrete model theory (see e.g. \cite[Theorem 2.2]{Hirschfeld}).  Recall that an \emph{inf formula}, or \emph{existential formula}, is a formula obtained by preceding a quantifier-free formula with one or more $\inf$-quantifiers.

	\begin{lem} \label{lem: type MC}
		Let $\mathrm{T}$ be an $\Lang$-theory.  Then the following are equivalent:
		\begin{enumerate}
			\item $\mathrm{T}$ is model complete, i.e.\ if $\cM$ and $\cN$ are models of $\mathrm{T}$, then every embedding $\cM \to \cN$ is an elementary embedding.
			\item For every $n$ and every pair $\mu, \nu \in \mathbb{S}_n(\mathrm{T})$, if $\psi(\mu) \leq \psi(\nu)$ for every $\inf$-formula $\psi$, then $\mu = \nu$.
			\item For every $\Lang$-formula $\phi$ and $\epsilon > 0$, there exists an inf-formula $\psi$ such that $|\phi - \psi| < \epsilon$ (on the appropriate sort or domain) for all models of $\mathrm{T}$.
		\end{enumerate}
	\end{lem}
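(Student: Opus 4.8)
The plan is to establish the cycle $(3)\Rightarrow(1)\Rightarrow(2)\Rightarrow(3)$ after recording two elementary observations about inf-formulas. First, an embedding $j\colon\cM\to\cN$ of $\Lang$-structures preserves all quantifier-free formulas and maps each sort of $\cM$ into the corresponding sort of $\cN$; hence for an inf-formula $\psi(\mathbf x)=\inf_{\mathbf y}\theta(\mathbf x,\mathbf y)$ one gets $\psi^{\cN}(j(\mathbf a))\le\psi^{\cM}(\mathbf a)$ for all $\mathbf a\in\cM$, i.e.\ inf-formulas are non-increasing along embeddings. Second, by renaming bound variables and pushing infima through connectives, the set of inf-formulas in a fixed tuple of variables is closed under finite $\max$ and $\min$, under $\psi\mapsto c\psi+d$ for scalars $c\ge0$, and under prefixing further $\inf$-quantifiers; in particular it contains the constant formulas. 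With these in hand, $(3)\Rightarrow(1)$ is immediate: given an embedding $j\colon\cM\to\cN$ between models of $\mathrm T$, a formula $\phi$, a tuple $\mathbf a\in\cM$ and $\epsilon>0$, I would pick by $(3)$ an inf-formula $\psi$ with $|\phi-\psi|\le\epsilon$ throughout the models of $\mathrm T$, whence $\phi^{\cN}(j\mathbf a)\le\psi^{\cN}(j\mathbf a)+\epsilon\le\psi^{\cM}(\mathbf a)+\epsilon\le\phi^{\cM}(\mathbf a)+2\epsilon$; letting $\epsilon\to0$ gives $\phi^{\cN}(j\mathbf a)\le\phi^{\cM}(\mathbf a)$, and applying this to $-\phi$ gives equality, so $j$ is elementary.

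For $(1)\Rightarrow(2)$ I would argue as follows. Assume $\mathrm T$ is model complete and take $\mu,\nu\in\mathbb S_n(\mathrm T)$ with $\psi(\mu)\le\psi(\nu)$ for every inf-formula $\psi$; realize $\nu$ as $\tp^{\cN}(\mathbf b)$ for some $\cN\models\mathrm T$. The crux is to show that $\mathrm T$, together with the atomic diagram of $\cN$ (in the language expanded by a constant $c_d$ for each $d\in\cN$) and the conditions asserting that $(c_{b_1},\dots,c_{b_n})$ realizes $\mu$, is consistent; any model of it yields an embedding $j\colon\cN\to\cM$ with $\cM\models\mathrm T$ and $j(\mathbf b)$ realizing $\mu$, so model completeness makes $j$ elementary and forces $\nu=\tp^{\cN}(\mathbf b)=\tp^{\cM}(j\mathbf b)=\mu$. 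To get consistency I would use compactness: bundle a finite fragment of the atomic diagram into a single quantifier-free $\theta(\mathbf x,\mathbf y)\ge0$ with $\theta^{\cN}(\mathbf b,\mathbf e)=0$ for the relevant tuple $\mathbf e$ from $\cN$; then $\inf_{\mathbf y}\theta$ is an inf-formula vanishing at $\nu$, hence---by the hypothesis and nonnegativity---vanishing at $\mu$, so any realization of $\mu$ in a model of $\mathrm T$, after passing to an ultrapower so the infimum is attained, can be enlarged to satisfy that fragment of the diagram, while the finitely many ``realizes $\mu$'' conditions present in the finite piece hold automatically.

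For $(2)\Rightarrow(3)$ I would fix the tuple of sorts of the free variables and work on the compact Hausdorff space $\mathbb S_n(\mathrm T)$, on which every formula induces a continuous function and on which a uniform $\epsilon$-estimate is the same as one over all models of $\mathrm T$. Let $\mathcal I\subseteq C(\mathbb S_n(\mathrm T))$ be the set of functions arising from inf-formulas; by the second observation $\mathcal I$ is a sublattice containing the constants. Reading hypothesis $(2)$ for the pairs $(\mu,\nu)$ and $(\nu,\mu)$ shows that for distinct $\mu,\nu$ there are inf-formulas $\psi_1,\psi_2$ with $\psi_1(\mu)<\psi_1(\nu)$ and $\psi_2(\mu)>\psi_2(\nu)$; combined with closure under positive affine maps and the constants, this yields the two-point interpolation property: for all $\mu\ne\nu$ and all reals $a,b$ there is $f\in\mathcal I$ with $f(\mu)=a$ and $f(\nu)=b$. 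The lattice form of the Stone--Weierstrass theorem then makes $\mathcal I$ uniformly dense in $C(\mathbb S_n(\mathrm T))$, so every formula $\phi$ lies within $\epsilon$ of some inf-formula, which is exactly $(3)$.

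The step I expect to be the main obstacle is the consistency argument inside $(1)\Rightarrow(2)$: one must set up the atomic diagram so that its models really are extensions of $\cN$, compress a finite fragment into one quantifier-free formula, and then exploit the inf-formula hypothesis---together with an ultrapower to realize an infimum---to absorb that fragment over a realization of $\mu$. By contrast, the behaviour of inf-formulas under embeddings and connectives, the translation between estimates on $\mathbb S_n(\mathrm T)$ and estimates in models, and the verification of the interpolation hypotheses needed for the lattice Stone--Weierstrass theorem should all be routine.
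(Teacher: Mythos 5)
Your proposal is correct, and it follows the same overall architecture as the paper's proof (the cycle $(3)\Rightarrow(1)\Rightarrow(2)\Rightarrow(3)$, with $(3)\Rightarrow(1)$ identical). The two remaining implications are executed with slightly different but interchangeable standard tools. For $(1)\Rightarrow(2)$, the paper avoids the atomic diagram and compactness: it fixes a $\kappa^+$-saturated model realizing both types, takes a small elementary substructure $\cN$ containing the realization of $\nu$ together with a dense family $\mathbf{z}$, encodes finite fragments of $\qftp(\mathbf{y},\mathbf{z})$ as inf-formulas exactly as you do, and then uses saturation (rather than an ultrapower plus compactness) to produce a copy of $\cN$ sitting over the realization of $\mu$; the key point in both versions is the same, namely that the hypothesis transfers the vanishing of these inf-formulas from $\nu$ to $\mu$. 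For $(2)\Rightarrow(3)$, the paper does not cite the Kakutani--Stone lattice theorem as a black box but reproves what is needed: a Urysohn-type separation of disjoint closed subsets of $\mathbb{S}_n(\mathrm{T})$ by $[0,1]$-valued inf-formulas, followed by a layer-cake sum $\frac{1}{k}\sum_j \psi_j$ approximating $\phi$. Your appeal to the two-point interpolation property (obtained from $(2)$ applied to both orderings of a pair, plus closure of inf-formulas under $\max$, $\min$, and positive affine maps) is a legitimate shortcut and buys a shorter write-up at the cost of invoking an external theorem; the paper's version is self-contained. One small point to make explicit in your version: when passing from density in $C(\mathbb{S}_n(\mathrm{T}))$ back to a statement about all models of $\mathrm{T}$, you should note that $\sup$-norm closeness on the type space is exactly uniform closeness over all models, which you do assert and which is correct.
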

	
	The proof is similar to the quantifier elimination case, but more technical.  Since it has not been explicitly given in the literature for metric structures to our knowledge, we include the proof as an appendix.  The fact that quantifier elimination implies model completeness is immediate since Lemma \ref{lem: type QE} (1) implies Lemma \ref{lem: type QE} (3), or alternatively since Lemma \ref{lem: type QE} (2) implies Lemma \ref{lem: type MC} (2).
	
	\section{Quantifier elimination for tracial von Neumann algebras} \label{sec: QE}
	
	\subsection{Proof of Theorem \ref{thm: main QE}} \label{sec: QE proof}
	
	Toward the proof of Theorem \ref{thm: main QE}, first note that we can restrict our attention to type I algebras.  Indeed, the first author already showed that any tracial von Neumann algebra with a type II$_1$ summand does not admit quantifier elimination \cite[Theorem~1]{Farah2023} (another argument is given in Remark \ref{rem: alternate QE proof} below).  The next lemma will similarly allow us to eliminate summands of the form $M_n(\C) \otimes L^\infty[0,1]$ with $n \geq 2$, by showing that if either (1) or (2) in Theorem \ref{thm: main QE} happens, then there can be no such summands.
	
	\begin{lem} \label{lem: eliminate diffuse matrix term}
		Suppose that  $\cM$ is a tracial von Neumann algebra.  Assume either that $\Th(\cM)$ admits quantifier elimination or that any two projections of the same trace are conjugate by an automorphism of $\cM$. Then $\cM$ cannot have a direct summand of the form $M_n(\C) \otimes L^\infty[0,1]$ for $n \geq 2$.
	\end{lem}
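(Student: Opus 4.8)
The plan is to derive a contradiction from the assumption that $\cM$ has a direct summand $\cN \cong M_n(\C) \otimes L^\infty[0,1]$ with $n \geq 2$, by exhibiting two projections in $\cM$ with the same trace that are not conjugate by any automorphism, and simultaneously two elements (or tuples) with the same quantifier-free type but different full type. The key observation is that inside $M_n(\C) \otimes L^\infty[0,1]$ there are projections of ``the same size'' that live in very different positions relative to the center. Concretely, write $\cM = \cN \oplus \cN'$ where $\cN' $ is the complementary summand, and let $z$ denote the central projection of $\cM$ onto $\cN$, so $\tau(z) = \lambda > 0$. Inside $\cN = M_n(\C) \otimes L^\infty[0,1]$ with its trace $\tau_\cN = \frac{1}{\lambda}\tau|_\cN$, consider on one hand the projection $p_0 = e_{11} \otimes 1$ (a rank-one matrix unit tensored with the identity function), which has $\tau_\cN(p_0) = 1/n$, and on the other hand a projection $q_0 = 1 \otimes \chi_E$ where $E \subseteq [0,1]$ has Lebesgue measure $1/n$, so $\tau_\cN(q_0) = 1/n$ as well. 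Pulling these back to $\cM$ (i.e.\ viewing them as projections in $\cM$ that are zero on $\cN'$) gives two projections $p, q \in \cM$ with $\tau(p) = \tau(q) = \lambda/n$.

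The first step is to show $p$ and $q$ are not conjugate by any automorphism of $\cM$. Any automorphism $\alpha$ of $\cM$ preserves the center and permutes the central summands; since $\cN \cong M_n(\C)\otimes L^\infty[0,1]$ is not isomorphic to any other summand of $\cM$ unless that summand is also of this form with the same $n$ and same central measure, $\alpha$ must (after composing with a suitable identification) restrict to an automorphism of $\cN$. Now the point is a property of projections that is invariant under automorphisms of $\cN$ but distinguishes $p_0$ from $q_0$: for instance, the projection $p_0 = e_{11}\otimes 1$ has the feature that $p_0 \cN p_0 \cong L^\infty[0,1]$ is diffuse abelian with the ``full'' central measure, whereas $q_0 = 1\otimes\chi_E$ has $q_0 \cN q_0 \cong M_n(\C) \otimes L^\infty(E)$, which is a $\mathrm{II}_1$-corner-like algebra with a nontrivial matrix part — in particular $p_0\cN p_0$ is abelian but $q_0 \cN q_0$ is not. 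Since ``the compression $r\cN r$ is abelian'' is preserved by any automorphism (automorphisms send $r$ to an automorphic image and conjugate the compression), $p_0$ and $q_0$ are not conjugate in $\cN$, hence $p$ and $q$ are not conjugate in $\cM$. This immediately contradicts the hypothesis in the second case of the lemma.

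For the quantifier-elimination case one must instead produce a failure of Lemma~\ref{lem: type QE}(2): two tuples with the same quantifier-free type but different type in models of $\Th(\cM)$. The natural move is to use the projections $p$ and $q$ themselves: a single projection's quantifier-free type is determined by its trace (as noted in the paragraph after Theorem~\ref{thm: main QE}), so $\qftp^{\cM}(p) = \qftp^{\cM}(q)$; it then remains to check $\tp^{\cM}(p) \neq \tp^{\cM}(q)$, which follows because the property ``there is a projection $r \leq p$ with $\tau(r)=\tau(p)/2$ such that $p-r$ and $r$ generate a nonabelian algebra'' — or more cleanly, a first-order-expressible statement capturing ``$p\cM p$ has a matrix summand of size $\geq 2$'' versus ``$p\cM p$ is abelian'' — separates them. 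I would phrase this via an $\inf$-$\sup$ formula asserting the existence of a copy of $M_2(\C)$ inside the compression, or equivalently the existence of elements $v$ with $v^*v = p_1$, $vv^* = p_2$ orthogonal subprojections of $p$ of equal trace summing to $p$ with $[v,z']=0$ failing appropriately; such a formula evaluates differently on $p$ and $q$. The main obstacle is precisely this last point: one has to be careful that the distinguishing property is genuinely a property of the \emph{type} of the single element $p$ in $\cM$ (not using extra parameters), and that it transfers to all models of $\Th(\cM)$, i.e.\ it is captured by an honest $\Lang$-sentence with one free variable — the rest of the argument is soft structure theory of type I algebras and automorphisms.
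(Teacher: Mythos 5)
Your construction is exactly the paper's: the same two projections $E_{1,1}\otimes 1$ and $1\otimes\chi_E$ in the summand $M_n(\C)\otimes L^\infty[0,1]$, extended by zero, with equal traces and hence equal quantifier-free types. The only difference is the invariant used to separate them. The paper simply observes that $1\otimes\chi_E$ is central in $\cM$ while $E_{1,1}\otimes 1$ is not; centrality is captured by the one-variable formula $\sup_{y}\norm{[x,y]}_2$ (with $y$ in the unit ball), which evaluates to $0$ on one projection and to a positive number on the other, so the types differ, and since centrality is automorphism-invariant they are not automorphically conjugate either. Your abelian-versus-nonabelian-corner invariant also works and the "main obstacle" you flag at the end is easily dispatched: take $\phi(x)=\sup_{y_1,y_2}\norm{[xy_1x,\,xy_2x]}_2$, which is an honest one-variable formula vanishing on the projection with abelian corner and strictly positive on the other; and there is no transfer issue since both types are realized in $\cM$ itself, so Lemma~\ref{lem: type QE}(2) applies directly. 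Your detour through automorphisms permuting the central summands is also unnecessary: any automorphism carrying $p$ to $q$ restricts to an isomorphism $p\cM p\to q\cM q$, which is impossible when one corner is abelian and the other is not.
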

	
	\begin{proof}
		By contrapositive, suppose that $\cM$ has a direct summand of the form $M_n(\C) \otimes L^\infty[0,1]$.  In $M_n(\C) \otimes L^\infty[0,1]$, consider the projections $p = 1 \otimes \mathbf{1}_{[0,1/n]}$ and $q = E_{1,1} \otimes 1$, where $E_{1,1}$ is the canonical matrix unit in $M_n(\C)$.  These two projections have the same trace, hence they have the same $*$-moments, i.e.\ the same quantifier-free type.  However, they do not have the same type because $p$ is central and $q$ is not central in $M_n(\C) \otimes L^\infty[0,1]$, hence also in $\cM$.  So $\cM$ cannot admit quantifier elimination.  Furthermore, since $p$ and $q$ do not have the same type, they cannot be conjugate by an automorphism of $\cM$.
	\end{proof}
	
	Therefore, it suffices to prove Theorem \ref{thm: main QE} in the case where $\cM$ is a direct sum of an optional $L^\infty[0,1]$ term and matrix algebras.  
	Let us decompose $\cM$ as follows:
	\[
	\cM = (L^\infty[0,1], \alpha_0) \oplus \left( \bigoplus_{j \in J} (M_{n_j}(\C), \alpha_j) \right).
	\]
	Here $\alpha_j$, for $j\in \{0\}\sqcup J$, are the weights of the direct summands. Thus $\alpha_0+\sum_{j\in J} n_j \alpha_j=1$. 
	% By \cite[Lemma~3.2]{FG2023}, all $\alpha_j$ can be computed from the theory of $\cM$, and  readers familiar with this result may appreciate a translation between our  $\alpha_j$ notation and the function $\rho_\cM$ used in \cite{FG2023} . Such translation will be provided at a convenient place, after \eqref{eq.alpha-j} in the proof of Theorem \ref{thm: main QE}. 
	
	We rely on the following classification of the automorphisms of $\cM$ (for background on the structure theory for finite-dimensional algebras, see e.g.\ \cite[\S 3.1]{Davidson1996}, \cite[\S 3.2]{JonesSunder1997}).  Every automorphism of $\cM$ is a composition of the following:
	\begin{enumerate}
		\item A direct sum of automorphisms of each component
		(a measure-space automorphism of $L^\infty[0,1]$ and a unitary conjugation of each $M_n(\C)$ term), 
		\item Swaps of matrix algebras $M_n(\C)$ of the same dimension and the same weight.
	\end{enumerate}
	
	We first focus on the atomic portion.
	
	\begin{lem} \label{lem: no identical matrix summands}
		Suppose that $\cM$ is a tracial von Neumann algebra such that any two projections of the same trace are conjugate by an automorphism of $\cM$. Then any two matrix summands of $\cM$ with a common dimension greater than or equal to 2 must have different weights.
	\end{lem}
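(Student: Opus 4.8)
The plan is to prove the contrapositive: assuming $\cM$ has two matrix summands, each isomorphic to $M_n(\C)$ with $n \geq 2$ and both carrying the same weight $\alpha$ (so a minimal projection in either summand has trace $\alpha$), I will exhibit two projections of $\cM$ of equal trace that are not conjugate by any automorphism of $\cM$. The invariant that does the separating is the trace of the central support: for a projection $p$, the central support $z(p)$ is the smallest central projection dominating $p$, and any automorphism $\phi$ of $\cM$ satisfies $z(\phi(p)) = \phi(z(p))$ (because $\phi$ maps $Z(\cM)$ onto $Z(\cM)$ and is an order isomorphism of the projection lattice, hence preserves the relevant infimum) and preserves $\tau$ (this is part of what it means to be an automorphism of the \emph{tracial} von Neumann algebra $\cM$, and is precisely why, in the classification recalled above, only matrix blocks of equal weight may be swapped). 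Hence $p \mapsto \tau(z(p))$ is constant on automorphism orbits.

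Now write $e_1$ and $e_2$ for the unit projections of the two given summands, regarded as central projections of $\cM$; each satisfies $\tau(e_i) = n\alpha$. Since $n \geq 2$, choose a projection $p \leq e_1$ of rank $2$ inside the first summand, so that $\tau(p) = 2\alpha$ and $z(p) = e_1$. Next choose minimal projections $q_1 \leq e_1$ and $q_2 \leq e_2$ in the respective summands and set $q = q_1 + q_2$, so that $\tau(q) = \alpha + \alpha = 2\alpha = \tau(p)$, while $z(q) = e_1 + e_2$. Then $\tau(z(p)) = n\alpha$ but $\tau(z(q)) = 2n\alpha$, and these differ since $\alpha > 0$; so by the previous paragraph no automorphism of $\cM$ can carry $p$ to $q$, even though $\tau(p) = \tau(q)$. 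This contradicts the hypothesis that projections of equal trace are conjugate by an automorphism, completing the proof.

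I do not anticipate a genuine obstacle here; the only point requiring thought is the choice of the pair $(p,q)$, namely recognizing that $\tau(z(\cdot))$ — which records how much central mass a projection is spread over — distinguishes a projection concentrated in a single block from one of the same trace distributed over two blocks of equal weight. Alternatively, one could argue directly from the structure of $\mathrm{Aut}(\cM)$ recalled above: componentwise automorphisms fix each matrix block setwise and block swaps merely permute the blocks, so the orbit of a projection supported in a single block consists of projections supported in a single block, whereas $q$ meets two blocks. I would nevertheless prefer the central-support formulation, since it is self-contained and avoids invoking the explicit classification of automorphisms.
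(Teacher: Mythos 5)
Your proof is correct and uses exactly the same witnesses as the paper: a rank-$2$ projection in one of the two equal-weight blocks versus the sum of two rank-$1$ projections, one in each block. The paper simply asserts that these are not automorphically conjugate (implicitly relying on the classification of automorphisms recalled just before the lemma), whereas you justify it with the invariant $p \mapsto \tau(z(p))$; this is a clean, self-contained way to supply the step the paper leaves implicit.
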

	
	\begin{proof}
		Suppose there is some $j, k \in J$ so that $n_j = n_k \geq 2$, and $\alpha_j = \alpha_k$. 
		Let $p$ be a projection of rank $2$ in the $M_{n_j}(\C)$ summand, and let $q$ be a projection of rank $1$ in both the $M_{n_j}(\C)$ and $M_{n_k}(\C)$ summands (and $p$, $q$ are both 0 in all other summands.) Then $\tau(p) = \tau(q) = \frac{2 \alpha_j}{n_j}$, but $p$ and $q$ are not conjugate by any automorphism. 
	\end{proof}
	
	% Should we restate Theorem 1.1 here?
	
	\begin{proof}[Proof of Theorem \ref{thm: main QE}]
		(1) $\implies$ (2).  Suppose that $\cM$ admits elimination of quantifiers. In order to deal with the diffuse $L^\infty$ term and the atomic terms separately, we first show that the central projection $1_{L^\infty}$ is a definable element (see \S \ref{sec: definable}).  Note that for each $k$, the set
		\[
		S_k = \{e_1,\dots,e_k \in P(\cM) \cap Z(\cM): e_i e_j = 0, \tau(e_j) = \alpha_0/k \text{ for } i, j = 1, \dots, k \}
		\]
		is definable using the definability of the center (see \cite[Lemma 4.2]{FHS2013}) and the stability of projections.  Moreover, if $x$ is any element satisfying
		\[
		\inf_{(e_1,\dots,e_k) \in S_k} d\left(x,\sum_{j=1}^k e_j \right) \leq \epsilon,
		\]
		then $x$ is $\epsilon$-close to a central projection that is divisible into $k$ central projections of trace $\alpha_0/k$.  If $k$ is large enough, then the sum of the weights of discrete summands that are less than or equal to $\alpha_0/k$ will be less than $\epsilon^2$.  Hence, $\sum_{j=1}^k e_j$ will be $2\epsilon$-close to $1_{L^\infty}$.  So $1_{L^\infty}$ is definable.
		
		Let $p, q$ be two projections with the same trace. As noted in the proof of Lemma \ref{lem: eliminate diffuse matrix term}, $p$ and $q$ then have the same quantifier-free type and hence they have the same type.  Because $1_{L^\infty}$ is definable, every formula over $L^\infty$ and every formula over $\cN := \cM \ominus L^\infty$ can be expressed as a definable predicate over $\cM$. Thus, $1_{L^\infty} p$ and $1_{L^\infty}q$ have the same type in $L^\infty[0,1]$ and $(1_\cN)p$ and $(1_\cN)q$ have the same type in $\cN$.  
		Then, $1_{L^\infty} p$ and $1_{L^\infty}q$ are two projections of the same trace in $L^{\infty}[0,1]$ and therefore conjugate by an automorphism.  Meanwhile, $(1 - 1_{L^\infty})p$ and $(1 - 1_{L^\infty})q$ have the same type in $\cN$, hence they are conjugate by an automorphism in some elementary extension $\tilde{\cN}$ of $\cN$.  Since $\cN$ is type I and atomic, $\tilde{\cN}$ must equal $\cN$ (see \cite[Proposition 4.3]{FG2023} or \cite[Proposition 3.7(2)]{Jekel2024Optimal}).  Thus, $(1_\cN)p$ and $(1_\cN)q$ are conjugate by an automorphism of $\cN$, and so $p$ and $q$ are conjugate by an automorphism of $\cM$.
		
		(2) $\implies$ (1)
		Let $\mathrm{T} := \Th(\cM)$. We must check that every $\mathrm{T}$ type is determined by its quantifier-free type.
		First note that all $\mathrm{T}$ types can be realized in $\cM$; indeed, $\cM^{\cU}$ is countably saturated (see \S \ref{sec: model theory prelims}) and is a direct sum of $L^\infty[0,1]^{\cU}$ and $(\C^{n_j})^{\cU} = \C^{n_j}$ and ${M_{n_j}(\C)^{\cU} = M_{n_j}(\C)}$.   Any tuple of elements in $L^\infty[0,1]^{\cU}$  has the same type as some tuple in $L^\infty[0,1]$, and swapping out the element in the $L^\infty[0,1]^{\cU}$ summand for one of the same type will not change the type of the overall element in $\cM^{\cU}$.
		
		Fix some $\mathbf{x} = (x_1,\dots,x_k)$ and $\mathbf{y} = (y_1,\dots,y_k)$ in $\cM$ with the same quantifier-free type.
		We shall build a sequence of automorphisms $\sigma_n$ of $\cM$ such that $\sigma_n(\mathbf{x}) \rightarrow \mathbf{y}$, so $\tp^\cM(\mathbf{x}) = \tp^\cM(\mathbf{y})$. 
		Since there are no identical matrix summands with the same weight by Lemma \ref{lem: no identical matrix summands}, the only possible automorphisms of $\cM$ are those which are a direct sum of automorphisms of each component, possibly composed with swaps of copies of $\C$ which have the same weight. This motivates the following decomposition of $\cM$, where we group together copies of $\C$ which have the same weight:\footnote{By \cite[Lemma~3.2]{FG2023}, the data used in \eqref{eq.alpha-j} is computable from the theory of $\cM$. For reader's convenience we provide a translation.  In the terminology of \cite{FG2023}, $\alpha_0=\rho_{\cM}(1,0)$, $\rho_{\cM}(m,0)=0$ for $m\geq 2$, $\rho_{\cM}(1,k)$, for $k\geq 1$, is the sequence in which each $\alpha_j$, for $j\in J_1$, appears $n_j$ times, arranged in decreasing order.  Finally, $\rho_{\cM}(n_j,1)=\alpha_j$ and $\rho_{\cM}(n,k)=0$ if $n\neq n_j$ for all $j$ or if $k\geq 2$.}
		\begin{equation}   \label{eq.alpha-j}
			\cM = (L^\infty[0,1], \alpha_0) \oplus \left( \bigoplus_{j \in J_1} (\C, \alpha_j)^{\oplus n_j} \right) \oplus \left( \bigoplus_{j \in J_2} (M_{n_j}(\C), \alpha_j) \right).
		\end{equation}
		We will build the automorphisms on each summand of \eqref{eq.alpha-j} separately.
		
		We start with the matrix summands.  Let $p_j$, $j \in J_2$, be the central projection onto the $j$th summand $M_{n_j}(\C)$, where $n_j \geq 2$.  We claim that $p_j\mathbf{x} = (p_j x_1,\dots,p_j x_k)$ and ${p_j \mathbf{y} = (p_j y_1,\dots,p_j y_k)}$ have the same quantifier-free type in $M_{n_j}(\C)$.  Let $f$ be a self-adjoint non-commutative $*$-polynomial.  For Borel $E \subseteq \R$, we have $\tau(1_E(f(\mathbf{x}))) = \tau(1_E(f(\mathbf{y})))$, so by assumption there is some automorphism $\sigma$ conjugating $1_E(f(\mathbf{x}))$ to $1_E(f(\mathbf{y}))$.  As noted above, the automorphism $\sigma$ must fix $p_j$, so $\sigma(p_j 1_E(f(\mathbf{x}))) = p_j 1_E(f(\mathbf{y}))$. Hence, $\tau(p_j 1_E(f(\mathbf{x}))) = \tau(p_j 1_E(f(\mathbf{y})))$, or equivalently ${\tr_{n_j}(1_E(f(p_j \mathbf{x}))) = \tr_{n_j}(1_E(f(p_j \mathbf{y})))}$. Since $E$ was arbitrary, $f(p_j \mathbf{x})$ and $f(p_j \mathbf{y})$ have the same empirical spectral distribution, hence also ${\tr_n(f(p_j \mathbf{x})) = \tr_n(f(p_j \mathbf{y}))}$.  This holds for all $f$, so the multivariate Specht's theorem \cite{Jing2015} implies that ${u_j p_j\mathbf{x}u_j^* = p_j \mathbf{y}}$ for some unitary $u \in M_{n_j}(\C)$.
		
		The same argument as in the matrix case shows that when $p_j$ for $j \in J_1$ is the central projection onto some summand of the form $\C^{n_j}$, $n_j \geq 1$, with each copy of $\C$ having the same weight $\alpha_j$, we obtain that $\qftp^{\C^{n_j}} (p_j \mathbf{x}) = \qftp^{\C^{n_j}}(p_j \mathbf{y})$, so some automorphism (i.e.\ permutation)  $\pi_j$ of $\C^{n_j}$ sends $p_j \mathbf{x}$ to $p_j \mathbf{y}$.
		
		Finally, let $p_0$ be the central projection onto the $L^\infty[0,1]$ summand. Then ${p_0 = 1 - \sum_{j \in J_1 \sqcup J_2} p_j}$, where $p_j$ is the central projection onto the $j$th summand of $\cM$. Hence, for any non-commutative $*$-polynomial $f$,
		\[
		\tau(p_0 f(\mathbf{x})) = \tau(f(\mathbf{x})) - \sum_{j \in J_1 \sqcup J_2} \tau(p_j f(\mathbf{x})) = \tau(f(\mathbf{y})) - \sum_{j \in J_1 \sqcup J_2} \tau(p_j f(\mathbf{y})) = \tau(p_0 f(\mathbf{y})),
		\]
		so we again obtain that $\qftp^{L^\infty}(p_0 \mathbf{x}) = \qftp^{L^\infty} (p_0 \mathbf{y})$.  By \cite[Lemma 2.16]{JekelModelEntropy}, there is a sequence of automorphisms $\alpha_n$ of $L^\infty[0,1]$ such that $\alpha_n(\mathbf{x}) \to \mathbf{y}$.
		
		To conclude, let $\sigma_n$ be the direct sum of the automorphisms in each summand of $\cM$ given by the arguments above, that is,
		\[
		\sigma_n = \alpha_n \oplus \bigoplus_{j \in J_1} \pi_j \oplus \bigoplus_{j \in J_2} \operatorname{Ad}_{u_j}.
		\]
		Then $\sigma_n(\mathbf{x}) \to \mathbf{y}$, so $\tp^{\cM}(\mathbf{x}) = \tp^{\cM}(\mathbf{y})$. Hence, $\cM$ admits elimination of quantifiers by Lemma \ref{lem: type QE}.
	\end{proof}
	
	\subsection{Tests for quantifier elimination} \label{sec: QE tests}
	
	The criterion for quantifier elimination of Theorem \ref{thm: main QE}, though simple, does not clearly indicate how to decide if a tracial von Neumann algebra admits quantifier elimination based on a given description as a direct sum of matrix algebras.  So we now give more explicit criteria, starting with the following characterization in terms of possible obstructions.
	
	\begin{prop} \label{prop: QE obstructions}
		A separable tracial von Neumann algebra $\cM$ admits quantifier elimination if and only if all the following conditions hold:
		\begin{enumerate}[(1)]
			\item $\cM$ is type I.
			\item $\cM$ has no summands of the form $M_n(\C) \otimes L^\infty[0,1]$ for $n \geq 2$.
			\item If $\cM$ has an $L^\infty[0,1]$ summand with weight $\alpha_0$, and if $p$ and $q$ are two projections in the atomic part, then either $\tau(p) = \tau(q)$ or $|\tau(p) - \tau(q)| > \alpha_0$.
			\item If $p$ and $q$ are two projections in the atomic part with $\tau(p) = \tau(q)$, then we have (letting $E_ {Z(\cM)}$ denote the center-valued trace in $\cM$) $E_{Z(\cM)}[p] = \sigma \circ E_{Z(\cM)}[q]$ where $\sigma$ is an automorphism of $\cM$ given by a permutation of one-dimensional summands with the same weight.
		\end{enumerate}
	\end{prop}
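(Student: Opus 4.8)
The plan is to read the proposition off Theorem~\ref{thm: main QE}: by that theorem $\Th(\cM)$ admits quantifier elimination precisely when (1) holds together with the property, call it $(\ast)$, that any two projections of $\cM$ of equal trace are conjugate by an automorphism. So I would fix a type~I algebra $\cM$ and argue that $(2)\wedge(3)\wedge(4)$ is equivalent to $(\ast)$. Throughout I would use that $\cM=(L^\infty[0,1],\alpha_0)\oplus\bigoplus_j(M_{n_j}(\C),\alpha_j)$ with the $L^\infty$ term possibly absent --- this uses type~I and condition (2), the latter holding whenever $(\ast)$ does by Lemma~\ref{lem: eliminate diffuse matrix term} --- together with the description of its automorphisms recalled in \S\ref{sec: QE proof}: each is a direct sum of a measure-space automorphism of $L^\infty[0,1]$ and unitary conjugations on the matrix blocks, post-composed with a permutation of one-dimensional summands of equal weight. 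Two consequences I would record at once: every automorphism fixes $1_{L^\infty}$, and every automorphism commutes with the center-valued trace $E_{Z(\cM)}$.

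For $(\ast)\Rightarrow(2)\wedge(3)\wedge(4)$: condition (2) is exactly Lemma~\ref{lem: eliminate diffuse matrix term}. For (3) I would argue by contradiction: if $\alpha_0>0$ and $p,q$ are atomic projections with $\tau(p)>\tau(q)$ and $\tau(p)-\tau(q)\le\alpha_0$, I enlarge $q$ by a projection $r\le 1_{L^\infty}$ with $\tau(r)=\tau(p)-\tau(q)$, so that $(\ast)$ yields an automorphism $\gamma$ with $\gamma(p)=q+r$; then $\gamma(1_{L^\infty})=1_{L^\infty}$ together with $p\cdot 1_{L^\infty}=0$ forces $r=0$, a contradiction. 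For (4), given atomic $p,q$ of equal trace and $\gamma$ with $\gamma(p)=q$, I would apply $E_{Z(\cM)}$: since $p$ is atomic, $E_{Z(\cM)}[p]$ is supported on the center of the atomic part, and since $\gamma$ permutes the matrix summands by a dimension- and weight-preserving permutation --- which by Lemma~\ref{lem: no identical matrix summands} fixes every summand of dimension $\ge 2$ --- the identity $E_{Z(\cM)}[q]=\gamma(E_{Z(\cM)}[p])$ says exactly that these center-valued traces differ by a permutation of one-dimensional summands of equal weight, which is condition (4).

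For the converse I would take projections $p=p_0+p_a$ and $q=q_0+q_a$, split into their $L^\infty$ and atomic parts, with $\tau(p)=\tau(q)$, and build an automorphism taking $p$ to $q$ summand by summand. First, $|\tau(p_a)-\tau(q_a)|=|\tau(p_0)-\tau(q_0)|\le\alpha_0$, so (3) forces $\tau(p_a)=\tau(q_a)$, hence $\tau(p_0)=\tau(q_0)$ (automatic if there is no $L^\infty$ summand). Next, (4) applied to $p_a,q_a$ gives a permutation $\sigma$ of equal-weight one-dimensional summands with $E_{Z(\cM)}[p_a]=\sigma(E_{Z(\cM)}[q_a])$; since $\sigma$ fixes every matrix summand, this means $p_a$ and $\sigma(q_a)$ have the same rank in every summand of $\cM$. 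Then a measure-space automorphism of $L^\infty[0,1]$ carries $p_0$ to $q_0$ (equal traces), and unitary conjugations carry each block of $p$ to the corresponding block of $\sigma(q)$ (equal ranks); the direct sum of these, composed with $\sigma^{-1}$, sends $p$ to $q$, so $(\ast)$ holds and Theorem~\ref{thm: main QE} finishes the proof. I expect the only genuinely delicate point to be the bookkeeping for (4): one must track how an automorphism acts on $Z(\cM)$ and invoke Lemma~\ref{lem: no identical matrix summands} to exclude honest swaps of matrix summands, so that conjugate atomic projections really do have center-valued traces agreeing up to a permutation confined to the one-dimensional part; everything else follows the pattern of the proof of Theorem~\ref{thm: main QE}.
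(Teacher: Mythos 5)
Your proposal is correct and follows essentially the same route as the paper: reduce everything to the criterion of Theorem~\ref{thm: main QE}, get (2) from Lemma~\ref{lem: eliminate diffuse matrix term}, refute (3) by padding the smaller atomic projection with a diffuse projection and using that automorphisms fix $1_{L^\infty}$, derive (4) from the automorphism-invariance of $E_{Z(\cM)}$ together with Lemma~\ref{lem: no identical matrix summands}, and prove the converse by assembling an automorphism summand by summand. The only difference is presentational (you make explicit why $q$ and $p+r$ cannot be conjugate, which the paper leaves implicit), so nothing further is needed.
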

	
	\begin{proof}
		Suppose $\cM$ admits quantifier elimination.  Then \cite[Theorem~1]{Farah2023} implies (1) and Lemma \ref{lem: eliminate diffuse matrix term} implies (2).
		
		For (3), suppose for contradiction that there are two projections $p$ and $q$ in the atomic part with $0 < |\tau(p) - \tau(q)| \leq \alpha_0$, and without loss of generality suppose that $\tau(p) < \tau(q)$.  Let $p'$ be a projection in $L^\infty[0,1]$ such that $\tau(p') = \tau(q) - \tau(p)$.  Then $q$ and $p' + p$ have the same trace but are not equivalent by an automorphism, so by Theorem \ref{thm: main QE}, $\cM$ does not have quantifier elimination.
		
		For (4), let $p$ and $q$ be projections in the atomic part with $\tau(p) = \tau(q)$.  By Theorem \ref{thm: main QE}, $p$ and $q$ are conjugate by an automorphism.  Hence also $E_{Z(\cM)}[p]$ and $E_{Z(\cM)}[q]$ are conjugate by an automorphism.  In light of Lemma \ref{lem: no identical matrix summands}, every automorphism must fix the central projections associated to $M_n(\C)$ terms for $n \geq 2$.  Thus, $E_{Z(\cM)}[p]$ and $E_{Z(\cM)}[q]$ must have equal components in each of the $M_n(\C)$ summands for $n \geq 2$.  So they differ by an automorphism that merely permutes the one-dimensional summands.
		
		Conversely, assume (1)--(4).  Let $p$ and $q$ be two projections of the same trace.  Using (3), the traces of $p$ and $q$ in the $L^\infty[0,1]$ summand must agree, so there is an automorphism of $\cM$ such that $\alpha(p) - q$ is in the atomic part of $\cM$.  So assume without loss of generality that $p$ and $q$ are in the atomic part.  By (4), after applying an automorphism, we can assume that $E_{Z(\cM)}[p] = E_{Z(\cM)}[q]$.  Hence, the components of $p$ and $q$ in each direct summand $M_n(\C)$ of $\cM$ (where $n \geq 1$), have the same rank, and hence are unitarily conjugate. Overall, $p$ and $q$ are conjugate by an automorphism.  By Theorem \ref{thm: main QE}, $\cM$ admits quantifier elimination.
	\end{proof}
	
	Next, we describe how to test condition (4) for the atomic part in terms of the weights in the direct sum decomposition.  As motivation, recall that by Lemma \ref{lem: no identical matrix summands}, two matrix algebras of the same dimension cannot have the same weight.  In fact, there are many more constraints of a similar nature.  For instance, if
	\[
	\cM = (\C,1/2) \oplus (\C,1/3) \oplus (\C,1/6),
	\]
	then $1 \oplus 0 \oplus 0$ and $0 \oplus 1 \oplus 1$ have the same trace but are not automorphically conjugate.  Another example is if
	\[
	\cM = (\C,2/5) \oplus (M_3(\C),3/5),
	\]
	then $\cM$ does not admit quantifier elimination since a rank $2$ projection in the second summand has the same trace as $1$ in the first summand.  Hence, we must consider various ways that zero could be written as a linear combinations of ranks of projections from different summands.  More generally, as in Proposition \ref{prop: QE obstructions} (3), quantifier elimination requires that no number smaller than $\alpha_0$ can be written as such a linear combination.  This gives essentially all the conditions that are needed, though one must also handle the one-dimensional summands carefully since Lemma \ref{lem: no identical matrix summands} only applies for $n \geq 2$.
	
	\begin{prop} \label{prop: QE linear inequality}
		Let $\cM$ be a separable tracial von Neumann algebra.  Then $\cM$ admits quantifier elimination if and only if $\cM$ has a decomposition of the form:
		\[
		\cM = (L^\infty[0,1], \alpha_0) \oplus \left( \bigoplus_{j \in J_1} (\C, \alpha_j)^{\oplus n_j} \right) \oplus \left( \bigoplus_{j \in J_2} (M_{n_j}(\C), \alpha_j) \right),
		\]
		where for some countable sets $J_1, J_2$, such that
		\begin{enumerate}[(1)]
			\item The weights satisfy $\alpha_0 \geq 0$ and $\alpha_j > 0$ for $j \in J_1 \cup J_2$, and the weights sum to $1$.
			\item The indices $\alpha_j$ for $j \in J_1$ are distinct, that is, we have grouped together \emph{all} one-dimensional summands of the same weight in our decomposition.
			\item For all choices of integers $|r_j| \leq n_j$ for $j \in J_1 \cup J_2$ which are not all zero, we have
			\[
			\left| \sum_{j \in J_1} r_j \alpha_j + \sum_{j \in J_2} \frac{r_j \alpha_j}{n_j} \right| > \alpha_0.
			\]
		\end{enumerate}
	\end{prop}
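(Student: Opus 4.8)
The plan is to deduce this from Proposition~\ref{prop: QE obstructions}, translating its conditions (1)--(4) into the single linear inequality (3) above. It is convenient to abbreviate $w_j = \alpha_j$ for $j \in J_1$ and $w_j = \alpha_j/n_j$ for $j \in J_2$, so that $w_j$ is the trace of a minimal projection of the $j$-th block (one coordinate of $\C^{\oplus n_j}$ when $j \in J_1$, a rank-one projection of $M_{n_j}(\C)$ when $j \in J_2$); then a projection of ``rank'' $a_j$ in block $j$ has trace $a_j w_j$, with $0 \le a_j \le n_j$. All the series below converge absolutely, since $\sum_{j \in J_1} n_j \alpha_j \le 1$ and $\sum_{j \in J_2} \alpha_j \le 1$.

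For the ``only if'' direction I would start from $\cM$ with quantifier elimination. Proposition~\ref{prop: QE obstructions}(1)--(2), together with the structure theory of separable type~I tracial von Neumann algebras, shows that $\cM$ is the direct sum of an optional $(L^\infty[0,1],\alpha_0)$ term (allowing $\alpha_0 = 0$) and countably many matrix summands; grouping all one-dimensional summands of a common weight and keeping each $M_{n_j}(\C)$ with $n_j \ge 2$ separate produces a decomposition of the stated form satisfying (1) and (2). To get (3) I would argue by contradiction: suppose there are integers $|r_j| \le n_j$, not all zero, with $|S| \le \alpha_0$, where $S := \sum_{j \in J_1} r_j \alpha_j + \sum_{j \in J_2} r_j \alpha_j / n_j = \sum_j r_j w_j$, and assume $S \ge 0$ after possibly exchanging the roles of the positive and negative $r_j$. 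Take $p$ to be the projection of rank $\max(r_j,0)$ in block $j$ and $q$ the projection of rank $\max(-r_j,0)$ in block $j$. Then $p$ and $q$ lie in the atomic part, are supported on disjoint sets of blocks, and $\tau(p) - \tau(q) = S$; neither is $0$, since $q = 0$ would force all $r_j \ge 0$, hence $S = 0$ and then all $r_j = 0$. If $S = 0$, then $p$ and $q$ are nonzero projections of equal trace supported on disjoint blocks; since by Lemma~\ref{lem: no identical matrix summands} every automorphism of $\cM$ acts within each block (a measure automorphism of $L^\infty[0,1]$, a unitary conjugation of each $M_{n_j}(\C)$, and a permutation of the equal-weight coordinates of each $\C^{\oplus n_j}$), no automorphism can send $q$ to $p$, contradicting Theorem~\ref{thm: main QE}. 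If $0 < S \le \alpha_0$, then $\alpha_0 > 0$ and $p$, $q$ directly violate Proposition~\ref{prop: QE obstructions}(3). Either way quantifier elimination fails, so (3) must hold.

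For the ``if'' direction I would assume a decomposition of the stated form satisfying (1)--(3) and verify the four conditions of Proposition~\ref{prop: QE obstructions}. Conditions (1) and (2) there are immediate from the form of the decomposition. For the other two, given projections $p$, $q$ in the atomic part with ranks $a_j$, $b_j$ in block $j$, set $r_j = a_j - b_j$; then $|r_j| \le n_j$ and $\tau(p) - \tau(q) = \sum_j r_j w_j$. If the $r_j$ are not all zero, condition (3) gives $|\tau(p) - \tau(q)| > \alpha_0 \ge 0$, which is Proposition~\ref{prop: QE obstructions}(3), and which also shows that $\tau(p) = \tau(q)$ forces $a_j = b_j$ for every $j$. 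In that case the components of $p$ and $q$ in each $M_{n_j}(\C)$ summand ($j \in J_2$) have equal rank, hence equal center-valued trace, while in each $\C^{\oplus n_j}$ summand ($j \in J_1$) they are rank-$a_j$ coordinate projections and so are interchanged by a permutation $\sigma_j$ of those $n_j$ equal-weight copies of $\C$; the automorphism $\sigma = \bigoplus_{j \in J_1} \sigma_j \oplus \id$ then satisfies $E_{Z(\cM)}[p] = \sigma \circ E_{Z(\cM)}[q]$, which is Proposition~\ref{prop: QE obstructions}(4). Hence $\cM$ admits quantifier elimination.

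The argument is largely bookkeeping, and the step I expect to require the most care is the non-conjugacy claim in the ``only if'' direction: one must confirm that the automorphism classification of $\cM$ genuinely forbids transporting mass between the disjointly supported blocks carrying $p$ and $q$. This is precisely where Lemma~\ref{lem: no identical matrix summands} (no swaps of matrix summands of dimension $\ge 2$) and the grouping convention (2) (so that coordinate swaps occur only inside a single $\C^{\oplus n_j}$ block) are needed; the boundary cases $\alpha_0 = 0$ and $p$ or $q$ equal to $0$ then fall out of the same analysis.
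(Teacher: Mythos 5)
Your proof is correct and follows essentially the same route as the paper's: the same rank-$\max(\pm r_j,0)$ projections in each block for the ``only if'' direction, the same appeal to the automorphism classification (blocks are preserved because weights and dimensions distinguish them), and the same rank-matching deduction from condition (3) in the converse. The differences are cosmetic — you route both directions through Proposition \ref{prop: QE obstructions}, whereas the paper works directly from Theorem \ref{thm: main QE}, equalizing the traces of $p$ and $q$ by adding compensating projections $p_0,q_0$ in the $L^\infty[0,1]$ summand instead of splitting into the cases $S=0$ and $0<S\le\alpha_0$; also, your parenthetical claim that neither $p$ nor $q$ is zero is justified only in the $S=0$ case (if all $r_j\ge 0$ and not all vanish then $S>0$, so $q$ may well be $0$ when $0<S\le\alpha_0$), but that is the only case where nonvanishing is actually used, so nothing breaks.
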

	
	\begin{proof}
		Suppose $\cM$ admits quantifier elimination.  We already know $\cM$ decomposes into an optional $L^\infty[0,1]$ term and an atomic part.  By grouping the one-dimensional terms with the same weight, we obtain a direct sum decomposition satisfying conditions (1) and (2).  It remains to check condition (3).  By contrapositive, suppose that there exist integers $|r_j| \leq n_j$ satisfying
		\[
		\left| \sum_{j \in J_1} r_j \alpha_j + \sum_{j \in J_2} \frac{r_j \alpha_j}{n_j} \right| \leq \alpha_0.
		\]
		For $j \in J_1$, let $p_j$ and $q_j$ be projections in $(\C,\alpha_j)^{\oplus n_j}$ such that
		\[
		\rank(p_j) = \max(r_j,0), \qquad \rank(q_j) = \max(-r_j,0).
		\]
		Similarly, for $j \in J_2$, let $p_j$ and $q_j$ be projections in $(M_{n_j}(\C),\alpha_j)$ with the same rank conditions.  Thus, $\rank(p_j) - \rank(q_j) = r_j$.  Finally, let
		\[
		t = \sum_{j \in J_1} r_j \alpha_j + \sum_{j \in J_2} \frac{r_j \alpha_j}{n_j},
		\]
		and let $p_0$ and $q_0$ be projections in $(L^\infty[0,1],\alpha_0)$ such that $\tau(p_0) = \max(-t,0)$ and $\tau(q_0) = \max(t,0)$, so that $\tau(p_0) - \tau(q_0) = -t$.  Let
		\[
		p = p_0 \oplus \bigoplus_{j \in J_1} p_j \oplus \bigoplus_{j \in J_2} p_j, \qquad q = q_0 \oplus \bigoplus_{j \in J_1} q_j \oplus \bigoplus_{j \in J_2} q_j.
		\]
		By construction,
		\[
		\tau(p) - \tau(q) = \tau(p_0) - \tau(q_0) + \sum_{j \in J_1} \alpha_j r_j + \sum_{j \in J_2} \frac{\alpha_j r_j}{n_j} = 0.
		\]
		However, $p$ and $q$ are not automorphically conjugate.  Indeed, $r_j$ is nonzero for some $j$.  If $j \in J_1$, the components of $p$ and $q$ in the central summand $(\C,\alpha_j)^{\oplus n_j}$ have different ranks, and $(\C,\alpha_j)^{\oplus n_j}$ is invariant under automorphisms because we grouped together all the terms with the same weight.  Similarly, if $j \in J_2$, then the components of $p$ and $q$ in $(M_{n_j}(\C),\alpha_j)$ have different ranks, and by Lemma \ref{lem: no identical matrix summands}, $(M_{n_j}(\C),\alpha_j)$ must be invariant under automorphisms since there is only one summand with a given dimension and weight.  Hence, if (3) does not hold, then $\Th(\cM)$ does not admit quantifier elimination.
		
		Conversely, suppose $\cM$ has a decomposition satisfying (1) - (3).  Consider two projections $p = p_0 \oplus \bigoplus_{j \in J_1 \sqcup J_2} p_j$ and $q= q_0 \oplus \bigoplus_{j \in J_1 \sqcup J_2} q_j$ in $\cM$ with the same trace. Then 
		\[
		\tau(p_0) - \tau(q_0) = \sum_{j \in J_1 \sqcup J_2} \frac{\alpha_j (\text{rank}(q_j) - \text{rank}(p_j))}{n_j}.
		\]
		Hence,
		\[
		\left| \sum_{j \in J_1} \alpha_j (\operatorname{rank}(q_j) - \operatorname{rank}(p_j)) + \sum_{j \in J_2} \frac{\alpha_j (\operatorname{rank}(q_j) - \operatorname{rank}(p_j))}{n_j} \right| = |\tau(p_0) - \tau(q_0)| \leq \alpha_0.
		\]
		By condition (3), this forces $\rank(p_j) = \rank(q_j)$ for all $j \in J_1 \sqcup J_2$.  In particular, for $j \in J_1$, $p_j$ and $q_j$ are projections in $(\C,\alpha_j)^{\oplus n_j}$ with the same rank and hence conjugate by an automorphism permuting the summands.  Moreover, for $j \in J_2$, $p_j$ and $q_j$ are projections in $M_{n_j}(\C)$ with the same rank, hence they are unitarily conjugate.  Finally, since $p_j$ and $q_j$ have the same trace for $j \in J_1 \sqcup J_2$, we deduce that $p_0$ and $q_0$ have the same trace in $L^\infty[0,1]$ and hence they are conjugate by a measure-preserving transformation.  Patching the automorphisms on each summand together, $p$ and $q$ are automorphically conjugate.  Thus, by Theorem \ref{thm: main QE}, $\cM$ has quantifier elimination.
	\end{proof}

	\section{Model completeness for $\mathrm{II}_1$ factors} \label{sec: MC factor proof}
	
	This section proves Theorem \ref{thm: main MC} in the case of a $\mathrm{II}_1$ factor $\cM$.  The proof is a more sophisticated variant of \cite[Lemma 2.1]{Farah2023}, which was in turn based on \cite[Corollary~6.11]{Bro:Topological}.
	
	Our construction is based on random matrix theory.  Let $\mathbb{U}_n$ denote the unitary group of $M_n(\mathbb C)$.  As a compact Lie group, $\mathbb{U}_n$ has a unique left-invariant probability measure, called the \emph{Haar measure}.  By a \emph{Haar random unitary}, we mean a $\mathbb{U}_n$-valued random variable $U^{(n)}$ whose probability distribution is the Haar measure, i.e., $\mathbb{E}[f(U^{(n)})] = \int_{\mathbb{U}_n} f(u)\,d\operatorname{Haar}(u)$ for every continuous function $f$ on $\mathbb{U}_n$.  Let $U_1^{(n)}$, $U_2^{(n)}$, $U_3^{(n)}$, and $U_4^{(n)}$ be independent Haar random unitaries.  We assume throughout that they are on the same probability space $(\Omega,\mathcal{F},P)$.
	
	Consider the tensor decomposition $\cM \cong M_n(\C) \otimes \cM^{1/n}$, where $\cM^{1/n}$ is the $1/n$ compression of $\cM$ \cite[\S 2.6-2.8]{MvNROO4}; for each $n$, we fix a decomposition for the entire argument, and write $\cM = M_n(\C) \otimes \cM^{1/n}$.  We set
	\[
	\mathbf{X}^{(n)} = (X_1^{(n)},X_2^{(n)},X_3^{(n)}) = (U_1^{(n)} \otimes 1_{\cM^{1/n}},U_2^{(n)} \otimes 1_{\cM^{1/n}}, U_3^{(n)} \otimes 1_{\cM^{1/n}})
	\]
	and
	\[
	\mathbf{Y}^{(n)} = (Y_1^{(n)},Y_2^{(n)},Y_3^{(n)}) = ((U_1^{(n)} \oplus U_1^{(n)}) \otimes 1_{\cM^{1/2n}}, (U_2^{(n)} \oplus U_2^{(n)}) \otimes 1_{\cM^{1/2n}}, (U_3^{(n)} \oplus U_4^{(n)}) \otimes 1_{\cM^{1/2n}}).
	\]
	Fix a free ultrafilter $\cU$ on $\cN$ and consider $\mathbf{X}(\omega) = [\mathbf{X}^{(n)}(\omega)]_{n \in \cN}$ and $\mathbf{Y}(\omega) = [\mathbf{Y}^{(n)}(\omega)]_{n \in \N}$.  Thus, $\mathbf{X}$ and $\mathbf{Y}$ are intuitively tuples of random elements of $\cM^{\cU}$; however, we have to proceed carefully because $\mathbf{X}$ and $\mathbf{Y}$ are not necessarily measurable functions of $\omega$ (see \cite[\S 6]{GaoJekelIntegral}).  Thus, formally, our arguments are based on first fixing an outcome $\omega$ for which the $\mathbf{X}^{(n)}$'s satisfy some conditions, and then using the values of $\mathbf{X}$ and $\mathbf{Y}$ associated to this $\omega$.

	\subsection{Outline of the proof} \label{sec: outline}
	The outline of the argument is as follows:
	\begin{enumerate}[(1)]
		\item \label{item1} Almost surely, for every $\inf$-formula $\phi$, $\phi^{\cM^\cU}(\mathbf{Y}) \leq \phi^{\cM^{\cU}}(\mathbf{X})$.
		\item Almost surely, the commutant $\mathbf{X}' \cap \cM^{\cU}$ is given by
		\[
		\cA = \prod_{n \to \cU} (\C 1_{M_n(\C)} \otimes \cM^{1/n}) \subseteq \prod_{n \to \cU} (M_n(\C) \otimes \cM^{1/n}).
		\]
		\item Almost surely, the commutant $\mathbf{Y}' \cap \cM^{\cU}$ is given by
		\[
		\cB = \prod_{n \to \cU} [(\C 1_{M_n(\C)} \oplus \C 1_{M_n(\C)}) \otimes \cM^{1/2n}] \subseteq \prod_{n \to \cU} (M_{2n}(\C) \otimes \cM^{1/2n}).
		\]
		\item Consequently, $\mathbf{X}' \cap \cM^{\cU}$ has trivial center but $\mathbf{Y}' \cap \cM^{\cU}$ does not and so $\mathbf{X}$ and $\mathbf{Y}$ do not have the same type.
		\item By Lemma \ref{lem: type MC} together with (1) and (4), $\Th(\cM)$ is not model complete.
	\end{enumerate}
	The notation explained above will be fixed throughout the section.  Moreover, we continue with the standing assumption that $M_2(\cM)$ embeds into $\cM^{\cU}$ for some ultrafilter $\cU$, but this will only be used in the proof of \eqref{item1}, in  Lemma \ref{lem: inf formula limit}.
	
	\subsection{Concentration of measure and approximate embedding}
	
	For step (1), we use the following concentration of measure estimate which is based on the log-Sobolev inequality of Gross \cite{Gross1975}.  The application of concentration in random matrix theory is due to Ben Arous and Guionnet \cite{BAG1997}; see also \cite{Guionnet2009} and \cite[\S 2.3 and 4.4]{AGZ2009}.
	
	\begin{prop}[{See \cite[\S 4.4 and Appendix F.6]{AGZ2009} and \cite[Theorem 5.16-5.17]{Meckes2019}}] \label{prop: concentration}
		Let ${f: \mathbb{U}_n^{\times m} \to \R}$ be an $L$-Lipschitz function with respect to $\norm{\cdot}_2$.  Let $\mathbf{U}^{(n)}$ be a random element of $\mathbb{U}_n^{\times m}$ with probability distribution given by the Haar measure.  Then for some positive constant $c$ independent of $n$, for all $\delta > 0$,
		\[
		\mathbb{P}( |f(\mathbf{U}^{(n)}) - \mathbb{E}[f(\mathbf{U}^{(n)})]| \geq \delta) \leq e^{-cn^2 \delta / L^2}.
		\]
	\end{prop}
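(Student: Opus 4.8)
The plan is to obtain this as the standard consequence of a logarithmic Sobolev inequality (LSI) on $\mathbb{U}_n$ together with the Herbst argument, so that the proposition becomes a restatement of \cite[Theorems~5.16--5.17]{Meckes2019} (see also \cite[\S 4.4 and Appendix~F.6]{AGZ2009}) once the normalization of the metric is accounted for. First I would fix the following data: give $\mathbb{U}_n$ the bi-invariant Riemannian metric induced by the \emph{unnormalized} Hilbert--Schmidt inner product $\langle X,Y\rangle = \Tr(X^*Y)$ on $\mathfrak{u}(n)$, write $d_g$ for the associated geodesic distance, and note that under the embedding $\mathbb{U}_n \subseteq M_n(\C)$ one has $\norm{U-V}_{HS} \le d_g(U,V)$ (chords are shorter than geodesics), while $\norm{\cdot}_2 = n^{-1/2}\norm{\cdot}_{HS}$ by definition of the normalized trace. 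Consequently, a function on $\mathbb{U}_n^{\times m}$ that is $L$-Lipschitz with respect to the $\ell^2$-product of the normalized norms $\norm{\cdot}_2$ is $(L/\sqrt{n})$-Lipschitz with respect to the $\ell^2$-product of the geodesic distances.

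The analytic heart is the claim that Haar measure on $\mathbb{U}_n$ satisfies an LSI with constant $\rho_n \le c_0/n$ for a universal constant $c_0$, with respect to $d_g$. For $SU(n)$ this is immediate from the Bakry--\'Emery criterion: the bi-invariant metric is Einstein with $\mathrm{Ric} = \tfrac{n}{2}\langle\cdot,\cdot\rangle$, so Haar measure satisfies $\mathrm{CD}(n/2,\infty)$ and hence (building on Gross's Gaussian LSI \cite{Gross1975}) an LSI with constant $O(1/n)$. The point requiring care is the flat central circle of $\mathbb{U}_n$, along which the curvature vanishes; this is dealt with exactly as in \cite[Theorems~5.16--5.17]{Meckes2019} (using that $\mathbb{U}_n$ has $\mathrm{CD}(0,\infty)$ together with geodesic diameter $O(\sqrt{n})$ and dimension $n^2$), and it is the only genuinely non-formal ingredient. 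Since an LSI tensorizes with no loss of constant, Haar measure on $\mathbb{U}_n^{\times m}$ satisfies an LSI with the same constant $\rho_n \le c_0/n$ with respect to the product geodesic distance, for every fixed $m$.

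It then remains to run the Herbst argument: if a probability measure satisfies an LSI with constant $\rho$, then every function $F$ that is $1$-Lipschitz for the associated distance obeys $\mathbb{P}(|F-\mathbb{E}F|\ge t) \le 2\exp(-t^2/(2\rho))$. Applying this to $F = (\sqrt{n}/L)\,f$, which by the first paragraph is $1$-Lipschitz for the product geodesic distance, and to $t = \sqrt{n}\,\delta/L$, gives
\[
\mathbb{P}\bigl(\,|f(\mathbf{U}^{(n)}) - \mathbb{E}[f(\mathbf{U}^{(n)})]|\ge \delta\,\bigr)
\;\le\; 2\exp\!\left(-\frac{n\,\delta^2}{2\rho_n L^2}\right)
\;\le\; 2\exp\!\left(-\frac{n^2\delta^2}{2c_0 L^2}\right),
\]
which is the asserted estimate (indeed with the sharper $\delta^2$ in the exponent), with $c = 1/(2c_0)$. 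The main obstacle is, as noted, pinning down the $O(1/n)$ dependence of the LSI constant on $\mathbb{U}_n$ in the presence of its flat center; everything else is the elementary rescaling $\norm{\cdot}_2 = n^{-1/2}\norm{\cdot}_{HS}$, the inequality $\norm{\cdot}_{HS}\le d_g$, tensorization of LSIs, and the Herbst argument, none of which interacts with $n$ in a delicate way.
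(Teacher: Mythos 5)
Your route --- pass from the normalized $\norm{\cdot}_2$ to the unnormalized Hilbert--Schmidt/geodesic metric (picking up a factor $n^{-1/2}$ in the Lipschitz constant), establish a log-Sobolev inequality for Haar measure on $\mathbb{U}_n$ with constant $O(1/n)$, tensorize, and run Herbst --- is exactly the standard argument and is precisely what the cited sources \cite[\S 4.4, App.~F.6]{AGZ2009} and \cite[Thm.~5.16--5.17]{Meckes2019} carry out; the paper offers no proof beyond the citation, so there is no divergence of method. Two caveats. First, a minor one: your parenthetical justification for the $\mathbb{U}_n$ LSI (``$\mathrm{CD}(0,\infty)$ together with geodesic diameter $O(\sqrt{n})$'') would, taken literally, only yield an LSI constant of order $\mathrm{diam}^2 = O(n)$, which is useless here; the actual argument in Meckes writes $U = e^{i\theta}V$ with $V \in SU(n)$ and $\theta$ ranging over an arc of Hilbert--Schmidt length $2\pi/\sqrt{n}$, so that the flat central direction contributes an LSI constant $O(1/n)$ matching the Bakry--\'Emery constant for $SU(n)$. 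Since you defer to Meckes for this step, the proof is fine, but the heuristic you offer in its place does not deliver the constant.

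Second, and more substantively: you derive the bound $2\exp(-cn^2\delta^2/L^2)$ and describe it as ``sharper'' than the stated $\exp(-cn^2\delta/L^2)$. That is backwards in the regime that matters: for $\delta < 1$ one has $\delta^2 < \delta$, so your bound is \emph{weaker} and does not imply the printed statement. In fact the printed statement is false for small $\delta$: take $f(U) = \re U_{11}$, which is $\sqrt{n}$-Lipschitz for $\norm{\cdot}_2$; then $\sqrt{n}\,U_{11}$ is asymptotically a standard complex Gaussian, so $\mathbb{P}(|f| \geq n^{-1/2})$ stays bounded below, whereas the printed bound at $\delta = n^{-1/2}$, $L = \sqrt{n}$ would give $e^{-c\sqrt{n}} \to 0$. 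So the exponent in the proposition should read $\delta^2$ (this is also what both references state), and what you have proved is the correct statement rather than a strengthening of it. The discrepancy propagates: the application in Lemma \ref{lem: type convergence} takes $\delta = 1/n$, which with the correct $\delta^2$ form yields the non-summable bound $e^{-c/L^2}$; one should instead take $\delta_n = n^{-1/2}$ (or any $\delta_n \to 0$ with $n^2\delta_n^2 \gtrsim \log n$), after which the Borel--Cantelli argument goes through unchanged.
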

	
	\begin{lem} \label{lem: type convergence}
		For every $3$-variable formula $\phi$, there is a constant $C(\varphi)$ such that
		\begin{equation} \label{eq: type convergence}
			\lim_{n \to \cU} \phi^{\cM}(\mathbf{X}^{(n)}) = C(\varphi) \text{ for a.e. } \omega \in \Omega.
		\end{equation}
		In particular, $\lim_{n \to \cU} \tp^{\cM}(\mathbf{X}^{(n)})$ is almost surely constant.
	\end{lem}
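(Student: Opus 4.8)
The plan is to realize $\omega\mapsto\phi^{\cM}(\mathbf{X}^{(n)}(\omega))$ as a Lipschitz function of the Haar unitaries $\mathbf U^{(n)}=(U_1^{(n)},U_2^{(n)},U_3^{(n)})$, with a Lipschitz constant that does \emph{not} depend on $n$, and then combine the concentration estimate of Proposition~\ref{prop: concentration} with the Borel--Cantelli lemma. First I would reduce to the case in which $\phi$ is built using only connectives that are Lipschitz on bounded subsets of $\R$: by the standard approximation of formulas in continuous logic, for every $\phi$ and $\epsilon>0$ there is such a formula $\phi_\epsilon$ with $|\phi^{\cN}(\mathbf a)-\phi_\epsilon^{\cN}(\mathbf a)|<\epsilon$ for \emph{all} tracial von Neumann algebras $\cN$ and all $\mathbf a$ in the relevant operator-norm ball. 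For a Lipschitz-connective formula $\phi$ one checks by induction on its construction that $\phi^{\cN}$, as a function on the appropriate product of unit balls of $\cN$, is $L_\phi$-Lipschitz in $\norm{\cdot}_2$ with $L_\phi$ depending only on $\phi$ and not on $\cN$; the base case uses that $\re\tr$ is $1$-Lipschitz in $\norm{\cdot}_2$ by Cauchy--Schwarz and that $*$-monomials are Lipschitz on the unit ball, while for the quantifier step one uses $|\sup_y\psi^{\cN}(\mathbf a,y)-\sup_y\psi^{\cN}(\mathbf b,y)|\le\sup_y|\psi^{\cN}(\mathbf a,y)-\psi^{\cN}(\mathbf b,y)|$ (and likewise for $\inf$).

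Since $\mathbf X^{(n)}(\omega)=(U_1^{(n)}(\omega)\otimes 1_{\cM^{1/n}},U_2^{(n)}(\omega)\otimes 1_{\cM^{1/n}},U_3^{(n)}(\omega)\otimes 1_{\cM^{1/n}})$ and $u\mapsto u\otimes 1_{\cM^{1/n}}$ is isometric from the normalized Hilbert--Schmidt metric on $\mathbb U_n$ into $(\cM,\norm{\cdot}_2)$, the function $f_n\colon\mathbb U_n^{\times 3}\to\R$, $\mathbf u\mapsto\phi^{\cM}(\mathbf u\otimes 1_{\cM^{1/n}})$, is $L_\phi$-Lipschitz with $L_\phi$ independent of $n$. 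Proposition~\ref{prop: concentration} then gives, for each $\delta>0$,
\[
\mathbb P\big(|f_n(\mathbf U^{(n)})-\mathbb E[f_n(\mathbf U^{(n)})]|\ge\delta\big)\le e^{-cn^2\delta/L_\phi^2},
\]
which is summable in $n$. By Borel--Cantelli, for a.e.\ $\omega$ we have $|f_n(\mathbf U^{(n)}(\omega))-\mathbb E[f_n(\mathbf U^{(n)})]|<\delta$ for all but finitely many $n$, and since $\cU$ is free this gives $|\lim_{n\to\cU}\phi^{\cM}(\mathbf X^{(n)}(\omega))-\lim_{n\to\cU}\mathbb E[\phi^{\cM}(\mathbf X^{(n)})]|\le\delta$ (both ultrafilter limits exist because $\phi$ is bounded on its sort, and no measurability of $\mathbf X$ is needed since each $f_n$ is continuous). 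Intersecting the resulting probability-one events over $\delta=1/k$ shows $\lim_{n\to\cU}\phi^{\cM}(\mathbf X^{(n)})=\lim_{n\to\cU}\mathbb E[\phi^{\cM}(\mathbf X^{(n)})]=:C(\phi)$ almost surely, which is \eqref{eq: type convergence} for Lipschitz-connective $\phi$. For arbitrary $\phi$, apply this to $\phi_{1/k}$ and intersect the a.s.\ events over $k$; using $|\phi^{\cM}(\mathbf X^{(n)})-\phi_{1/k}^{\cM}(\mathbf X^{(n)})|<1/k$ one sees that $(C(\phi_{1/k}))_k$ is Cauchy and that $\lim_{n\to\cU}\phi^{\cM}(\mathbf X^{(n)})=\lim_k C(\phi_{1/k})$ almost surely.

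For the final assertion, note that since the language is separable there is a countable family $\mathcal D$ of $3$-variable formulas that is uniformly dense, over all tracial von Neumann algebras and unit-ball tuples, in all $3$-variable formulas. Intersecting over $\phi\in\mathcal D$ the probability-one events just produced, on the resulting probability-one event the type $\lim_{n\to\cU}\tp^{\cM}(\mathbf X^{(n)}(\omega))$ agrees on $\mathcal D$ with the fixed functional $C(\cdot)$; since both $\phi\mapsto\lim_{n\to\cU}\phi^{\cM}(\mathbf X^{(n)}(\omega))$ and $\phi\mapsto C(\phi)$ are $1$-Lipschitz for the sup-norm on formulas, they coincide on all formulas, so $\lim_{n\to\cU}\tp^{\cM}(\mathbf X^{(n)})$ is almost surely the constant type $C(\cdot)$.

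The one step requiring genuine care is the $n$-independence of the Lipschitz constant $L_\phi$, without which the concentration rate $e^{-cn^2\delta/L_\phi^2}$ need not be summable: this hinges both on the quantifier step above (suprema and infima over operator-norm balls of $\cM$ do not inflate the Lipschitz constant) and on the embedding $u\mapsto u\otimes 1_{\cM^{1/n}}$ being $\norm{\cdot}_2$-isometric, so that passing from $M_n(\C)$ to the much larger algebra $\cM$ does not degrade the concentration of measure.
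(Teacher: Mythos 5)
Your proposal is correct and follows essentially the same route as the paper: reduce to a (countable, dense) family of formulas whose Lipschitz constant in $\norm{\cdot}_2$ is independent of $n$, use that $u\mapsto u\otimes 1_{\cM^{1/n}}$ is $\norm{\cdot}_2$-isometric, and then apply Proposition~\ref{prop: concentration} together with Borel--Cantelli to identify the ultralimit with the ultralimit of the expectations. The only cosmetic difference is that the paper takes $\delta=1/n$ in the concentration bound while you fix $\delta$ and intersect over $\delta=1/k$; both yield the same conclusion.
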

	
	\begin{proof}
		To prove the claims, it suffices to show \eqref{eq: type convergence} holds almost surely for each $\phi$ in a countable dense set of formulas (as usual in measure theory, ``almost surely'' distributes over countable conjunctions).
		
		In fact, the dense set of formulas can be chosen to be Lipschitz.  Indeed, a formula will be Lipschitz as long as the atomic formulas and the connectives used are all Lipschitz; the quantifiers do not cause any issue since the supremum of a family of $L$-Lipschitz functions is $L$-Lipschitz.  The atomic formulas are traces of non-commutative polynomials, and for every non-commutative polynomial $p$ and $R > 0$, there is some $L$ such that $\tau(p)$ is $L$-Lipschitz with respect to $\norm{\cdot}_2$ on each operator norm ball of radius $R$.  The connectives in the language are continuous functions $\R^m \to \R$, which can all be approximated on compact sets by Lipschitz functions.
		
		So assume that $\phi$ is an $L$-Lipschitz formula in three variables. Note that $\mathbf{X}^{(n)}$ depends in a Lipschitz manner upon $\mathbf{U}^{(n)} = (U_1^{(n)},U_2^{(n)},U_3^{(n)})$; indeed, the mapping $M_n(\C) \to \cM$ given by $u \mapsto u \otimes 1_{\cM^{1/n}}$ is $1$-Lipschitz.  In particular, $\phi^{\cM}(\mathbf{X}^{(n)})$ is an $L$-Lipschitz function of $\mathbf{U}^{(n)}$.  Therefore, applying Proposition \ref{prop: concentration} with $\delta = 1/n$,
		\[
		\mathbb{P}( |\phi^{\cM}(\mathbf{X}^{(n)}) - \mathbb{E}[\phi^{\cM}(\mathbf{X}^{(n)})])| \geq 1/n) \leq e^{-cn / L^2}.
		\]
		By the Borel-Cantelli lemma, this implies that almost surely
		\[
		\lim_{n \to \infty} |\phi^{\cM}(\mathbf{X}^{(n)}) - \mathbb{E}[\phi^{\cM}(\mathbf{X}^{(n)})]| = 0, \quad \text{hence} \quad 
		\lim_{n \to \cU} \phi^{\cM}(\mathbf{X}^{(n)}) = \lim_{n \to \cU} \mathbb{E}[\phi^{\cM}(\mathbf{X}^{(n)})].\qedhere
		\]
	\end{proof}
	
	\begin{lem} \label{lem: inf formula limit}
		Almost surely, for every $\inf$-formula $\phi$ in three variables,
		\begin{equation} \label{eq: inf formula inequality}
			\lim_{n \to \cU} \phi^{\cM}(\mathbf{Y}^{(n)}) \leq \lim_{n \to \cU} \phi^{\cM}(\mathbf{X}^{(n)}).
		\end{equation}
	\end{lem}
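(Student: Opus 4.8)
\emph{Proof plan.} The plan is to prove the equivalent inequality $\phi^{\cM^{\cU}}(\mathbf{Y}) \le \phi^{\cM^{\cU}}(\mathbf{X})$ for every inf-formula $\phi$ in three variables; since $\lim_{n\to\cU}\phi^{\cM}(\mathbf{X}^{(n)}) = \phi^{\cM^{\cU}}(\mathbf{X})$ and likewise for $\mathbf{Y}$ by \L{}o\'{s}'s theorem, this is exactly \eqref{eq: inf formula inequality}. As in the proof of Lemma \ref{lem: type convergence} it suffices to treat a countable dense family of Lipschitz inf-formulas, which we write as $\phi = \inf_{\mathbf{w}}\psi(\mathbf{x},\mathbf{w})$ with $\psi$ quantifier-free and $\mathbf{w}$ a $k$-tuple ranging over a fixed product of operator-norm balls. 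The key is to split $\mathbf{Y}$ into its two blocks: let $q = [q^{(n)}]_{n\to\cU}\in\cM^{\cU}$ be the trace-$\tfrac{1}{2}$ projection coming from the first-block projections $q^{(n)} := (1\oplus 0)\otimes 1_{\cM^{1/2n}}\in M_{2n}(\C)\otimes\cM^{1/2n}$. Then $q$ commutes with $\mathbf{Y}$, both $q\cM^{\cU}q$ and $q^{\perp}\cM^{\cU}q^{\perp}$ are isomorphic to the ultraproduct $\prod_{n\to\cU}(M_n(\C)\otimes\cM^{1/2n})$, and $\mathbf{Y} = \mathbf{X}_1\oplus\mathbf{X}_2$ where, under these identifications, $\mathbf{X}_1 := q\mathbf{Y}q = [\mathbf{U}^{(n)}\otimes 1_{\cM^{1/2n}}]_{n\to\cU}$ with $\mathbf{U}^{(n)} = (U_1^{(n)},U_2^{(n)},U_3^{(n)})$ and $\mathbf{X}_2 := q^{\perp}\mathbf{Y}q^{\perp} = [\mathbf{U}'^{(n)}\otimes 1_{\cM^{1/2n}}]_{n\to\cU}$ with $\mathbf{U}'^{(n)} = (U_1^{(n)},U_2^{(n)},U_4^{(n)})$. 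The argument then proceeds through the chain $\phi^{\cM^{\cU}}(\mathbf{Y}) \le \phi^{q\cM^{\cU}q}(\mathbf{X}_1) \le \phi^{\cM^{\cU}}(\mathbf{X})$.

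For the first inequality I would restrict the infimum defining $\phi^{\cM^{\cU}}(\mathbf{Y})$ to block-diagonal witnesses $\mathbf{w}_1\oplus\mathbf{w}_2$ with $\mathbf{w}_1\in(q\cM^{\cU}q)^k$ and $\mathbf{w}_2\in(q^{\perp}\cM^{\cU}q^{\perp})^k$. Since $\mathbf{U}^{(n)}$ and $\mathbf{U}'^{(n)}$ are identically distributed (each being a triple of independent Haar unitaries), the concentration-of-measure argument of Lemma \ref{lem: type convergence} shows that almost surely $\tp^{q\cM^{\cU}q}(\mathbf{X}_1) = \tp^{q^{\perp}\cM^{\cU}q^{\perp}}(\mathbf{X}_2)$; fix such an $\omega$ and work with the corresponding values of $\mathbf{X},\mathbf{Y}$. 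Then for any $\mathbf{w}_1$ the quantifier-free type of $\mathbf{w}_1$ over $\mathbf{X}_1$ is consistent over $\mathbf{X}_2$ with $\Th(q^{\perp}\cM^{\cU}q^{\perp})$, and since $q^{\perp}\cM^{\cU}q^{\perp}$ is an ultraproduct, hence countably saturated, it is realized by some $\mathbf{w}_2$ in the same norm ball. Because the trace of a block-diagonal element is the average of the normalized traces of its blocks and $\tau(q) = \tau(q^{\perp}) = \tfrac{1}{2}$, this gives $\qftp^{\cM^{\cU}}(\mathbf{Y},\mathbf{w}_1\oplus\mathbf{w}_2) = \qftp^{q\cM^{\cU}q}(\mathbf{X}_1,\mathbf{w}_1)$, hence $\psi^{\cM^{\cU}}(\mathbf{Y},\mathbf{w}_1\oplus\mathbf{w}_2) = \psi^{q\cM^{\cU}q}(\mathbf{X}_1,\mathbf{w}_1)$; taking the infimum over $\mathbf{w}_1$ yields $\phi^{\cM^{\cU}}(\mathbf{Y})\le\phi^{q\cM^{\cU}q}(\mathbf{X}_1)$.

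The second inequality is where the standing hypothesis $M_2(\cM)\hookrightarrow\cM^{\cU}$ is used; it is equivalent to the existence of a trace-preserving embedding $\cM\hookrightarrow(\cM^{1/2})^{\cU}$ (cf.\ \S\ref{sec: amplification}). Compressing by a projection of trace $\tfrac{1}{n}$ and amplifying by $M_n(\C)$ produces, for each $n$ and an auxiliary free ultrafilter $\cU'$ on $\bbN$, a trace-preserving $*$-embedding $\Theta_n : M_n(\C)\otimes\cM^{1/n}\hookrightarrow \bigl(M_n(\C)\otimes\cM^{1/2n}\bigr)^{\cU'}$ carrying $\mathbf{U}^{(n)}\otimes 1_{\cM^{1/n}}$ to the image of $\mathbf{U}^{(n)}\otimes 1_{\cM^{1/2n}}$ under the diagonal embedding. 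Given a near-optimal witness $\mathbf{z} = [\mathbf{z}^{(n)}]_{n\to\cU}$ for $\mathbf{X}$ in $\cM^{\cU}$, the tuple $\Theta_n(\mathbf{z}^{(n)})$ has the same joint $*$-moments with that diagonal copy of $\mathbf{U}^{(n)}\otimes 1_{\cM^{1/2n}}$ as $\mathbf{z}^{(n)}$ has with $\mathbf{U}^{(n)}\otimes 1_{\cM^{1/n}}$, since $\Theta_n$ is a trace-preserving $*$-homomorphism; choosing a single representative $\mathbf{w}_1^{(n)}\in(M_n(\C)\otimes\cM^{1/2n})^k$ of $\Theta_n(\mathbf{z}^{(n)})$ from its defining sequence (with the same operator-norm bound), one can arrange that the joint $*$-moments of $(\mathbf{U}^{(n)}\otimes 1_{\cM^{1/2n}},\mathbf{w}_1^{(n)})$ differ from those of $(\mathbf{U}^{(n)}\otimes 1_{\cM^{1/n}},\mathbf{z}^{(n)})$ by at most $1/n$. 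Then $\mathbf{w}_1 := [\mathbf{w}_1^{(n)}]_{n\to\cU}\in(q\cM^{\cU}q)^k$ satisfies $\qftp^{q\cM^{\cU}q}(\mathbf{X}_1,\mathbf{w}_1) = \qftp^{\cM^{\cU}}(\mathbf{X},\mathbf{z})$ by \L{}o\'{s}'s theorem, so $\phi^{q\cM^{\cU}q}(\mathbf{X}_1)\le\psi^{q\cM^{\cU}q}(\mathbf{X}_1,\mathbf{w}_1) = \psi^{\cM^{\cU}}(\mathbf{X},\mathbf{z})$; taking the infimum over $\mathbf{z}$ completes the chain.

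The hard part will be the second inequality: one must move a near-optimal witness for $\mathbf{X}$ out of $\cM^{\cU}$ and into the strictly smaller corner $q\cM^{\cU}q$ attached to $\mathbf{X}_1$, and it is precisely for this that $M_2(\cM)$ — equivalently, each amplified compression $M_n(\C)\otimes\cM^{1/n}$ — must embed approximately into the corresponding corner, via an embedding that fixes the random-matrix part while transporting the witness with controlled moments. By contrast, the first inequality is essentially formal once $\mathbf{X}_1$ and $\mathbf{X}_2$ are known to have the same type, which is the same concentration input already used in Lemma \ref{lem: type convergence}; the only role of ``almost surely'' in the statement is to secure that equality of types.
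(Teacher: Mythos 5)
Your proposal is correct and is essentially the paper's argument: both proofs rest on the same two ingredients, namely the almost sure equality of the limiting types of $(U_1^{(n)},U_2^{(n)},U_3^{(n)})$ and $(U_1^{(n)},U_2^{(n)},U_4^{(n)})$ (so that a witness over one triple transfers to the other by countable saturation) and the embedding $i^{(n)}:\cM^{1/n}=M_2(\cM)^{1/2n}\to(\cM^{1/2n})^{\cV}$ extracted from the hypothesis $M_2(\cM)\hookrightarrow\cM^{\cV}$, which transports the near-optimal witness for $\mathbf{X}$ into the half-sized corner while fixing the random-matrix part. The only difference is bookkeeping: the paper realizes $\mathbf{Y}$ as the diagonal image $j(\mathbf{Y})=i^{\cU}(\mathbf{X})\oplus i^{\cU}(\tilde{\mathbf{X}})$ in the iterated ultrapower $(\cM^{\cU})^{\cV}$ and exhibits the witness $i^{\cU}(\mathbf{Z})\oplus i^{\cU}(\tilde{\mathbf{Z}})$ there in one step, whereas you stay inside $\cM^{\cU}$, factor through the intermediate quantity $\phi^{q\cM^{\cU}q}(\mathbf{X}_1)$, and pay for avoiding the iterated ultrapower with the representative-extraction/diagonalization needed to turn $\Theta_n(\mathbf{z}^{(n)})$ into an honest sequence; both routes are valid.
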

	
	\begin{proof}
		Let $\tilde{\mathbf{X}}^{(n)}$ be defined analogously to $\mathbf{X}^{(n)}$ but with $U_4^{(n)}$ in place of $U_3^{(n)}$, that is, $\tilde{\mathbf{X}}^{(n)} = (U_1^{(n)} \otimes 1_{\cM^{1/n}}, U_2^{(n)} \otimes 1_{\cM^{1/n}}, U_4^{(n)} \otimes 1_{\cM^{1/n}})$.  Since $\tilde{\mathbf{X}}^{(n)}$ has the same probability distribution as $\mathbf{X}^{(n)}$, the almost sure limit of $\tp^{\cM}(\tilde{\mathbf{X}}^{(n)})$ agrees with that of $\mathbf{X}^{(n)}$.
		
		In the following, we fix an outcome $\omega$ in the probability space such that the limit as $n \to \cU$ of the type of $\mathbf{X}^{(n)}$ and the type of $\tilde{\mathbf{X}}^{(n)}$ at $\omega$ agree with the almost sure limits given by Lemma \ref{lem: type convergence}.  Let $\phi$ be an existential formula.  Then $\phi$ can be expressed as
		\[
		\phi(x_1,x_2, x_3) = \inf_{z_1, \dots, z_k} \psi(x_1,x_2, x_3, z_1,\dots,z_k),
		\]
		where $\psi$ is a quantifier-free formula and each $z_j$ ranges over the unit ball.  Since $\cM^{\cU}$ is countably saturated (see \S \ref{sec: model theory prelims}), there exists some $\mathbf{Z} \in (\cM^{\cU})_1^k$ such that $\phi^{\cM^{\cU}}(\mathbf{X}) = \psi^{\cM^{\cU}}(\mathbf{X},\mathbf{Z})$.  Now because $\mathbf{X}$ and $\tilde{\mathbf{X}}$ have the same type in $\cM^{\cU}$, there also exists some $\tilde{\mathbf{Z}} \in (\cM^{\cU})_1^k$ such that $(\mathbf{X},\mathbf{Z})$ and $(\tilde{\mathbf{X}},\tilde{\mathbf{Z}})$ have the same quantifier-free type.
		
		In the hypotheses of Theorem \ref{thm: main MC}, we assumed there is an embedding $i: M_2(\cM) \to \cM^{\cV}$ for some ultrafilter $\cV$.\footnote{By standard methods, one can choose $\cV=\cU$ (see \cite[Theorem 16.7.4]{Fa:STCstar}), but this is besides the point.} 
		Let $i^{(n)}$ be the corresponding embedding
		\[
		i^{(n)}: \cM^{1/n} = M_2(\cM)^{1/2n} \to (\cM^{\cV})^{1/2n} \cong (\cM^{1/2n})^{\cV}.
		\]
		Then let
		\[
		i^{\cU} = \prod_{n \to \cU} (\id_{M_n(\C)} \otimes i^{(n)}): \cM^{\cU} = \prod_{n \to \cU} (M_n(\C) \otimes \cM^{1/n}) \to \prod_{n \to \cU} (M_n(\C) \otimes (\cM^{1/2n})^{\cV}) \cong ((\cM^{1/2})^{\cV})^{\cU}.
		\]
		Consider $i^{\cU}(\mathbf{X}) \oplus i^{\cU}(\tilde{\mathbf{X}})$ and $i^{\cU}(\mathbf{Z}) \oplus i^{\cU}(\tilde{\mathbf{Z}})$ as elements of
		\[
		M_2(((\cM)^{1/2})^{\cV})^{\cU}) = (\cM^{\cV})^{\cU} = (\cM^{\cU})^{\cV}.
		\]
		Note that $(i^{\cU}(\mathbf{X}) \oplus i^{\cU}(\tilde{\mathbf{X}}), \, i^{\cU}(\mathbf{Z}) \oplus i^{\cU}(\tilde{\mathbf{Z}}))$ has the same quantifier-free type as $(\mathbf{X},\mathbf{Z})$, and in particular,
		\[
		\phi^{(\cM^{\cU})^{\cV}}(i^{\cU}(\mathbf{X}) \oplus i^{\cU}(\tilde{\mathbf{X}})) \leq \psi^{(\cM^{\cU})^{\cV}}(i^{\cU}(\mathbf{X}) \oplus i^{\cU}(\tilde{\mathbf{X}}), \, i^{\cU}(\mathbf{Z}) \oplus i^{\cU}(\tilde{\mathbf{Z}})) = \phi^{\cM^{\cU}}(\mathbf{X}).
		\]
		On the other hand,
		\[
		i^{\cU}(\mathbf{X}) \oplus i^{\cU}(\tilde{\mathbf{X}}) = j(\mathbf{Y}),
		\]
		where $j$ is the diagonal embedding
		\[
		j: \cM^{\cU} \to (\cM^{\cU})^{\cV} \text{ or equivalently } \prod_{n \to \cU} (M_{2n}(\C) \otimes \cM^{1/2n}) \to \prod_{n \to \cU} (M_{2n}(\C) \otimes (\cM^{1/2n})^{\cV}).
		\]
		Hence,
		\[
		\phi^{\cM^{\cU}}(\mathbf{Y}) = \phi^{(\cM^{\cU})^{\cV}}(j(\mathbf{Y})) = \phi^{(\cM^{\cU})^{\cV}}(i^{\cU}(\mathbf{X}) \oplus i^{\cU}(\tilde{\mathbf{X}})) \leq \phi^{\cM^{\cU}}(\mathbf{X}).
		\]
		This proves the asserted inequality \eqref{eq: inf formula inequality}.
	\end{proof}

	\subsection{Spectral gap and quantum expanders}
	
	For steps (2) and (3) from \S \ref{sec: outline}, we want precise control over the commutants of the $\mathbf{X}$ and $\mathbf{Y}$.  Hence, we will use the notion of \emph{spectral gap} for an inclusion $\cN \subseteq \cM$ of tracial von Neumann algebras.  For $d \in \N$ and $C > 0$, we say that $\cN \subseteq \cM$ has \emph{$(C,d)$-spectral gap} if there exist $x_1$, \dots, $x_d$ in the unit ball $B_1^{\cN}$ such that
	\begin{equation} \label{eqn: spectral gap def}
		d(y,\cN' \cap \cM)^2 \leq C \sum_{j=1}^d \norm{[x_j,y]}_2^2 \text{ for } y \in \cM,
	\end{equation}
	where $\cN' \cap \cM = \{z \in \cM: [z,x] = 0 \text{ for } x \in \cN\}$.  If this is true for some $d$ and $C$, we say that $\cN \subseteq \cM$ has \emph{spectral gap}.  In the case $\cN = \cM$, note that $\cN' \cap \cM$ reduces to the center $Z(\cM)$, and in this case, we will say simply that $\cM$ has spectral gap.  The relevance of spectral gap for continuous logic was already observed by Goldbring \cite{Goldbring2023spectralgap}, who showed that spectral gap for $\cN \subseteq \cM$ implies that $\cN' \cap \cM$ is a definable set with parameters from $\cN$.
	
	It is well known that when the $x_j$'s in \eqref{eqn: spectral gap def} are unitaries, the inequality can be reformulated in the following way, which will motivate our use of quantum expanders.
	
	\begin{lem} \label{lem: spectral gap}
		Let $\cN \subseteq \cM$ be an inclusion of tracial von Neumann algebras and $\epsilon > 0$, let $u_1$, \dots, $u_d$ be unitaries in $\cN$.  Then the following are equivalent:
		\begin{enumerate}[(1)]
			\item For $a \in \cM$,
			\[
			\norm{a - E_{\cN' \cap \cM}(a)}_2^2 \leq \frac{1}{\epsilon} \sum_{j=1}^d \norm{[u_j,a]}_2^2.
			\]
			\item For $a \in \cM$,
			\[
			\norm{\sum_{j=1}^d u_j(a - E_{\cN' \cap \cM}(a)) u_j^* + u_j^*(a - E_{\cN}(a))u_j}_2 \leq (2d - \epsilon) \norm{a - E_{\cN' \cap \cM}(a)}_2.
			\]
		\end{enumerate}
	\end{lem}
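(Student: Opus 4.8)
The plan is to move everything to the Hilbert space $L^2(\cM)$ and read both conditions off a single self-adjoint operator. For a unitary $u \in \cN$, write $\theta_u$ for the operator $x \mapsto u x u^*$ on $L^2(\cM)$; it is unitary with adjoint $\theta_{u^*}$, it commutes with the orthogonal projections $Q$ onto $L^2(\cN)$ and $P$ onto $L^2(\cN' \cap \cM)$ (conjugation by $u \in \cN$ preserves both $\cN$ and $\cN' \cap \cM$), and it fixes every element of $\cN' \cap \cM$. Set $\Phi := \sum_{j=1}^d (\theta_{u_j} + \theta_{u_j}^*)$, a self-adjoint operator with $-2d \le \Phi \le 2d$, and $\Psi := \sum_j \theta_{u_j}$, so that $\Phi = \Psi + \Psi^*$ and $\Psi P = dP$. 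From $\norm{[u_j,a]}_2^2 = \norm{u_j a u_j^* - a}_2^2 = \langle (2 - \theta_{u_j} - \theta_{u_j}^*) a, a \rangle$ one gets $\sum_j \norm{[u_j,a]}_2^2 = \langle (2d - \Phi) a, a \rangle$, and since $(2d - \Phi)$ annihilates the range of $P$ this equals $\langle (2d - \Phi) b, b \rangle$ with $b := a - E_{\cN' \cap \cM}(a)$. Thus condition (1) is precisely the operator inequality $2d - \Phi \ge \epsilon(1 - P)$ on $L^2(\cM)$.

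Next I would rewrite the element inside the norm in (2). With $b = a - E_{\cN'\cap\cM}(a)$ and $c = a - E_\cN(a)$, using that $E_{\cN'\cap\cM}(a)$ is fixed by each $\theta_{u_j}$ and that $E_\cN(a) \in \cN$, one finds $\sum_j u_j b u_j^* = \Psi(a) - d\,E_{\cN'\cap\cM}(a)$ and $\sum_j u_j^* c u_j = \Psi^*(a) - \Psi^*(E_\cN(a))$, so that the expression inside the norm in (2) is $Ta$ with $T := \Phi - dP - \Psi^* Q$. The claim then becomes: $\norm{Ta}_2 \le (2d - \epsilon)\norm{b}_2$ for all $a$ if and only if $2d - \Phi \ge \epsilon(1-P)$. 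Here the point of having $E_\cN$ rather than $E_{\cN'\cap\cM}$ in the second group of terms is that it replaces $\Psi^* = \sum_j \theta_{u_j}^*$ by its compression away from $L^2(\cN)$, which is exactly what is needed so that the operator-norm bound in (2) matches the \emph{one-sided} form bound (1) and not the strictly stronger two-sided bound $\norm{\Phi b}_2 \le (2d-\epsilon)\norm{b}_2$. For the implication (1) $\Rightarrow$ (2) I would expand $\norm{Ta}_2^2$ using $\Psi P = dP$ and the commutation of $\Phi$ and $\Psi$ with $P$ and $Q$, then dominate it by $(2d-\epsilon)^2 \norm{b}_2^2$ via $\Phi \le 2d - \epsilon$ on $(1-P)L^2(\cM)$; for the converse I would test (2) on vectors lying in spectral subspaces of $\Phi$ near the top of its spectrum inside $L^2(\cM) \ominus L^2(\cN' \cap \cM)$.

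The main obstacle I anticipate is the middle step: identifying $T$ correctly and verifying that the $E_\cN$-correction term produces exactly the cancellation making $\norm{Ta}_2 \le (2d - \epsilon)\norm{b}_2$ equivalent to the one-sided inequality $2d - \Phi \ge \epsilon(1-P)$, rather than to a strictly stronger or weaker statement; if tracking constants carefully forces a mild adjustment in the numerology, this is where it would surface. Everything else — the passage to $L^2(\cM)$, the identity $\sum_j \norm{[u_j,a]}_2^2 = \langle(2d-\Phi)a,a\rangle$, and the commutation relations for $\theta_{u_j}$ with $P$ and $Q$ — is routine Hilbert-space bookkeeping.
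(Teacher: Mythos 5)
Your framework coincides with the paper's: the paper works with $Ta = ([u_1,a],\dots,[u_d,a])$ on $L^2(\cM)$, computes $T^*T = 2d - \Phi$ in your notation, notes that $T$ vanishes on $\mathcal{P} = \cN'\cap\cM$, and restates (1) exactly as your form bound $2d-\Phi \ge \epsilon(1-P)$. The genuine gap is at the step you flagged, and it is not a matter of numerology: the equivalence you assert for $T=\Phi - dP - \Psi^*Q$ is false, because condition (2) with $E_{\cN}$ taken literally cannot be equivalent to (1). Indeed, for $a\in(\cN'\cap\cM)\setminus\cN$ the right-hand side of (2) vanishes ($E_{\cN'\cap\cM}(a)=a$), while $E_{\cN}(a)\in\cN\cap\cN'=Z(\cN)$ by $\cN$-bimodularity of $E_\cN$, so $a-E_{\cN}(a)$ lies in $\cN'\cap\cM$, is fixed by each $\theta_{u_j}^*$, and the left-hand side equals $d\norm{a-E_{\cN}(a)}_2>0$. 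Such $a$ exist in situations where (1) holds (e.g.\ $\cN=M_2(\C)\otimes 1\subseteq M_4(\C)$ with the Pauli unitaries and $a=1\otimes x$, $x$ traceless), so (1) does not imply the printed (2). Your heuristic that the $E_\cN$-correction compresses $\Psi^*$ "away from $L^2(\cN)$'' in just the right way is also refuted at the other extreme $\cN=\cM$: there the second group of terms vanishes identically and the printed (2) degenerates to $\norm{\Psi|_{\mathcal{P}^\perp}}\le 2d-\epsilon$, which is automatic for $\epsilon\le d$ and carries no spectral-gap information.

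The resolution is that the second conditional expectation in (2) is a typo for $E_{\cN'\cap\cM}$; this is the reading the paper's proof silently uses, and it is the only one consistent with the applications (Corollaries 4.4 and 4.6, where $\cN'\cap\cM=\C 1$ and both expectations are $\tr_n$). With that reading your operator $T$ collapses to $\Phi(1-P) = (2d-T^*T)(1-P)$, and the lemma becomes: $T^*T\ge\epsilon$ on $\mathcal{P}^\perp$ iff $\norm{\Phi|_{\mathcal{P}^\perp}}\le 2d-\epsilon$. The one-sided versus two-sided discrepancy you correctly identified is then the whole content of the proof, and the paper closes it not by any cancellation involving $E_\cN$ but by invoking the bound $\norm{T^*T}\le 2d$, i.e.\ $\Phi\ge 0$, so that the operator-norm bound on the self-adjoint operator $\Phi|_{\mathcal{P}^\perp}$ reduces to the one-sided bound $\Phi|_{\mathcal{P}^\perp}\le 2d-\epsilon$. (You should satisfy yourself about the lower bound on $\Phi$ in whatever generality you need it; only the direction (2) $\Rightarrow$ (1), which uses no lower bound on $\Phi$ at all, is needed downstream in the paper.) Your proposed route for (1) $\Rightarrow$ (2) via the upper bound $\Phi\le 2d-\epsilon$ alone cannot give the norm estimate without such a lower bound.
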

	
	\begin{proof}
		Let $T: L^2(\cM) \to L^2(\cM)^d$ be given by $T(a) = ([u_1,a], \dots, [u_d,a])$.  By elementary computation,
		\[
		T^*T(a) = 2d\,a - \sum_{j=1}^d u_jau_j^* - \sum_{j=1}^d u_j^*au_j.  
		\]
		Let $\mathcal{P} = \cN' \cap \cM$.  Note that $T$ vanishes on $\mathcal{P}$ and $a - E_{\mathcal{P}}(a)$ is the orthogonal projection of $a$ onto $\mathcal{P}^{\perp}$.  Therefore, condition (1) can be restated as $\epsilon \norm{a}_2^2 \leq \norm{T(a)}_2^2 = \scal{a,T^*T(a)}$ for $a \in \mathcal{P}^{\perp}$,
		which is equivalent to the spectrum of $T^*T |_{\mathcal{P}^{\perp}}$ being contained in $[\epsilon,\infty)$.  Meanwhile, condition (2) can be restated as $\norm{(2d - T^*T)|_{\mathcal{P}^{\perp}}} \leq 2d - \epsilon$; since $\norm{T^*T} \leq 2d$, this is equivalent to the above.
	\end{proof}
	
	\emph{Quantum expanders} are defined as follows.  For $\epsilon > 0$ and $d \geq 2$, a \emph{$(d,\epsilon)$-quantum expander} is a sequence of $d$-tuples of $n \times n$ unitaries $U_1^{(n)}$, \dots, $U_d^{(n)}$ such that for $A \in M_n(\C)$,
	\begin{equation} \label{eq.4.10}
		\norm{\sum_{j=1}^d U_j^{(n)} (A - \tr_n(A)) (U_j^{(n)})^*}_2 \leq (d - \epsilon) \norm{A - \tr_n(A)}_2.
	\end{equation}
	This estimate has the same form as Lemma \ref{lem: spectral gap} except that the latter is symmetrized with respect to $u_j$ and $u_j^*$.  We remark that ${(U_1^{(n)}, \dots, U_d^{(n)}, (U_1^{(n)})^*, \dots, (U_d^{(n)})^*)}$ is a $(2d,2\epsilon)$-quantum expander whenever $(U_1^{(n)}, \dots, U_d^{(n)})$ is a $(d,\epsilon)$-quantum expander; this follows because the adjoint of the map $A \mapsto \sum_{j=1}^d U_j^{(n)} A (U_j^{(n)})^*$ is the map $A \mapsto \sum_{j=1}^d (U_j^{(n)})^* A U_j^{(n)}$.
	
	The following relationship between spectral gap and quantum expanders is immediate from applying Lemma \ref{lem: spectral gap} with $\cN = \cM = M_n(\C)$ and $\cN' \cap \cM = \C 1$.
	
	\begin{cor} \label{cor: quantum expanders}
		Unitaries $U_1^{(n)}$, \dots, $U_d^{(n)}$ witness $(d,1/\epsilon)$ spectral gap for $M_n(\C)$ if and only if $(U_1^{(n)},\dots,U_d^{(n)},(U_1^{(n)})^*,\dots,(U_d^{(n)})^*)$ is a $(2d,\epsilon)$-quantum expander.
	\end{cor}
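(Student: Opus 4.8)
The plan is to obtain Corollary~\ref{cor: quantum expanders} directly from Lemma~\ref{lem: spectral gap} by specializing that lemma to the inclusion $\cN = \cM = M_n(\C)$; everything else is a matter of unwinding the notation. First I would record what the ingredients of Lemma~\ref{lem: spectral gap} become in this case: the relative commutant is $\cN' \cap \cM = Z(M_n(\C)) = \C 1_{M_n(\C)}$, the trace-preserving conditional expectation onto it is $a \mapsto \tr_n(a)\,1_{M_n(\C)}$, and hence $d(a, \cN' \cap \cM) = \norm{a - \tr_n(a)\,1_{M_n(\C)}}_2$ for every $a \in M_n(\C)$. Since unitaries lie in the unit ball $B_1^{M_n(\C)}$, the tuple $(U_1^{(n)}, \dots, U_d^{(n)})$ is a legitimate choice of witnesses in the definition of spectral gap, so the statement that $U_1^{(n)}, \dots, U_d^{(n)}$ witness $(d, 1/\epsilon)$ spectral gap for $M_n(\C)$ is exactly condition~(1) of Lemma~\ref{lem: spectral gap} applied with $u_j = U_j^{(n)}$ and the relative commutant and conditional expectation described above.

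Next I would match condition~(2) of Lemma~\ref{lem: spectral gap} against the quantum expander inequality. Writing $b := a - \tr_n(a)\,1_{M_n(\C)}$, condition~(2) in this special case asserts that
\[
\norm{\sum_{j=1}^d \left( U_j^{(n)}\, b\, (U_j^{(n)})^* + (U_j^{(n)})^*\, b\, U_j^{(n)} \right)}_2 \;\leq\; (2d - \epsilon)\,\norm{b}_2 \qquad \text{for all } a \in M_n(\C).
\]
The point is that the left-hand side equals $\norm{\sum_{k=1}^{2d} V_k\, b\, V_k^*}_2$ for the $2d$-tuple $(V_1, \dots, V_{2d}) := (U_1^{(n)}, \dots, U_d^{(n)}, (U_1^{(n)})^*, \dots, (U_d^{(n)})^*)$, so the displayed inequality is verbatim the defining inequality~\eqref{eq.4.10} for $(V_1, \dots, V_{2d})$ to be a $(2d, \epsilon)$-quantum expander, i.e.\ \eqref{eq.4.10} with $2d$ unitaries and parameter $\epsilon$. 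By the equivalence of conditions~(1) and~(2) in Lemma~\ref{lem: spectral gap}, the corollary follows.

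I do not expect any genuine obstacle here, since the argument is purely a translation of definitions. The one point to keep straight is that replacing the $d$ commutators $[U_j^{(n)}, a]$ by the symmetrized $2d$-tuple $(U_1^{(n)}, \dots, U_d^{(n)}, (U_1^{(n)})^*, \dots, (U_d^{(n)})^*)$ is exactly what converts the one-sided commutator estimate of the spectral gap condition into the two-sided averaged estimate appearing in condition~(2), and this is what accounts for the ``$2d$'' rather than ``$d$'' in the resulting quantum expander parameter. One should also take a moment to confirm that $\tr_n(a)\,1_{M_n(\C)}$, with $\tr_n$ normalized, really is the trace-preserving conditional expectation onto $\C 1_{M_n(\C)}$, but this is immediate.
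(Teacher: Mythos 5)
Your proposal is correct and is exactly the paper's argument: the paper derives the corollary by applying Lemma~\ref{lem: spectral gap} with $\cN = \cM = M_n(\C)$, so that $\cN' \cap \cM = \C 1$ and $E_{\cN'\cap\cM}(a) = \tr_n(a)1$, and then observing that condition~(2) of that lemma is verbatim the quantum expander inequality~\eqref{eq.4.10} for the symmetrized $2d$-tuple. Your unwinding of the definitions, including the bookkeeping that turns the parameter $\epsilon$ in $(2d-\epsilon)$ into the expander parameter and $1/\epsilon$ into the spectral gap constant, matches the intended proof.
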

	
	Our argument uses Hastings's result that random unitaries give quantum expanders with high probability \cite{Hastings2007}; a similar result with matrix amplifications was shown by Pisier \cite{PisierExpanders}, and a generalization to other unitary representations was proved in \cite{BC2022}.   We remark as well that various other constructions of quantum expanders could have been used instead.  (A rich variety of determistic constructions exists, for instance, based on discrete Fourier transforms on non-abelian groups \cite{AS2004,BT2007,BST2010}, quantum versions of Margulis expanders \cite{GE2008}, systematic adaptation of classical expanders \cite{Harrow2008}, and zig-zag constructions \cite[\S 4]{BST2010}.)  Moreover, if $G$ is a group with property (T) (see \cite{BdlHVpropertyT} for background) with generators $g_1$, \dots, $g_d$, and $(\pi_j)_{j \in \N}$ is a sequence of irreducible unitary representations of $G$ on $\C^{n_j}$, then $(\pi_j(g_1),\dots,\pi_j(g_d))$ is a $(d,\epsilon)$-quantum expander where $\epsilon$ is related to the Kazhdan constant; thus, for instance, one can obtain quantum expanders from irreducible representations of $G = SL_3(\Z)$.  Property (T) groups and quantum expanders can be applied in many of the same contexts; see for instance the two proofs of \cite[Lemma 4.3]{ioana2021almost}.
	
	For the reader's convenience, we recall the precise statment of Hastings' result.
	
	\begin{thm}[{Hastings \cite{Hastings2007}, see also \cite[Lemma 1.8]{PisierExpanders}}] \label{thm: Hastings}
		Let $U_1^{(n)}$, \dots, $U_d^{(n)}$ be independent Haar random unitary matrices, and consider the (random) map $\Phi^{(n)}: M_n(\C) \to M_n(\C)$,
		\[
		\Phi^{(n)}(A) = \frac{1}{2d} \sum_{j=1}^d (U_j^{(n)} A (U_j^{(n)})^* + (U_j^{(n)})^*AU_j^{(n)})
		\]
		Let $\lambda_1^{(n)} \geq \lambda_2^{(n)} \geq \dots$ be the eigenvalues of $\Phi^{(n)}$ (here $\lambda_1^{(n)} = 1$ with eigenspace the span of the identity matrix).   Then almost surely
		\[
		\lim_{n \to \infty} \lambda^{(n)}_2 = \frac{\sqrt{2d - 1}}{d}.
		\]
	\end{thm}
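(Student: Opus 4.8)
The plan is to reduce the almost-sure statement to the convergence of the expectations $\mathbb{E}\lambda_2^{(n)}$ via concentration of measure, identify the limiting spectral picture with Kesten's spectral measure of the free group $F_d$ (which gives the lower bound), and establish the matching upper bound --- that no eigenvalue of $\Phi^{(n)}$ escapes above the limiting support --- which is the crux.

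First, $\Phi^{(n)}$ is self-adjoint for the Hilbert--Schmidt inner product on $M_n(\C)$, since $(\Phi^{(n)})^*(A) = \frac{1}{2d}\sum_j\bigl((U_j^{(n)})^*AU_j^{(n)} + U_j^{(n)}A(U_j^{(n)})^*\bigr) = \Phi^{(n)}(A)$; so its eigenvalues are real, $\lambda_1^{(n)} = 1$ with eigenvector $1_{M_n(\C)}$, and the traceless subspace $M_n^0(\C)$ is $\Phi^{(n)}$-invariant ($\Phi^{(n)}$ being unital and trace-preserving). As a function of $\mathbf{U}^{(n)} = (U_1^{(n)},\dots,U_d^{(n)})$, $\lambda_2^{(n)} = \max\operatorname{spec}\bigl(\Phi^{(n)}|_{M_n^0(\C)}\bigr)$ is Lipschitz for $\norm{\cdot}_2$ with constant $O(\sqrt n)$ (a perturbation of each $U_j^{(n)}$ of size $\epsilon$ in $\norm{\cdot}_2$ moves $\Phi^{(n)}$ by $O(\sqrt n\,\epsilon)$ in operator norm, hence each eigenvalue by $O(\sqrt n\,\epsilon)$ by Weyl), so Proposition \ref{prop: concentration} gives $\mathbb{P}\bigl(|\lambda_2^{(n)} - \mathbb{E}\lambda_2^{(n)}| \ge \delta\bigr) \le e^{-\Omega(n\delta)}$, and by Borel--Cantelli (as in the proof of Lemma \ref{lem: type convergence}) $\lambda_2^{(n)} - \mathbb{E}\lambda_2^{(n)} \to 0$ a.s.; the same applies to each fixed moment of $\Phi^{(n)}|_{M_n^0(\C)}$. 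Hence it suffices to prove $\mathbb{E}\lambda_2^{(n)} \to \frac{\sqrt{2d-1}}{d}$.

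Under $M_n(\C) \cong \C^n \otimes \overline{\C^n}$ the map $A \mapsto UAU^*$ becomes $U \otimes \overline{U}$, so $\Phi^{(n)} = \frac{1}{2d}\sum_{j=1}^d\bigl(U_j^{(n)} \otimes \overline{U_j^{(n)}} + (U_j^{(n)})^* \otimes \overline{(U_j^{(n)})^*}\bigr)$, and using that the $U_j^{(n)}$ are asymptotically $*$-free Haar unitaries (so $\tfrac1n\Tr(U_w^{(n)}) \to \delta_{w = e \text{ in } F_d}$ in $L^2$ for each word $w$),
\[
\frac{1}{n^2}\Tr\bigl((\Phi^{(n)})^{2k}\bigr) = \frac{1}{(2d)^{2k}}\sum_{|w| = 2k}\Bigl|\tfrac1n\Tr(U_w^{(n)})\Bigr|^2 \xrightarrow{n\to\infty} \frac{\#\{|w| = 2k : w = e \text{ in } F_d\}}{(2d)^{2k}},
\]
the sum over words of length $2k$ in the letters $U_j^{(n)}, (U_j^{(n)})^*$; the limit is the $2k$-th moment of Kesten's spectral measure $\mu$ of the $2d$-regular tree, which by Kesten's theorem is symmetric with support $\bigl[-\frac{\sqrt{2d-1}}{d}, \frac{\sqrt{2d-1}}{d}\bigr]$. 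Thus the empirical spectral distribution of $\Phi^{(n)}|_{M_n^0(\C)}$ converges to $\mu$ (a.s., by the concentration above), and since $\mu$ charges every neighbourhood of its right endpoint, a positive proportion of the eigenvalues accumulate near $\frac{\sqrt{2d-1}}{d}$; in particular $\liminf_n \lambda_2^{(n)} \ge \frac{\sqrt{2d-1}}{d}$ a.s., hence $\liminf_n \mathbb{E}\lambda_2^{(n)} \ge \frac{\sqrt{2d-1}}{d}$.

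The matching upper bound $\limsup_n \lambda_2^{(n)} \le \frac{\sqrt{2d-1}}{d}$ is the main obstacle, because convergence of the spectral distribution does not by itself preclude a single outlier eigenvalue. One route, following Hastings \cite{Hastings2007}, refines the trace method: using $\Tr\bigl((\Phi^{(n)}|_{M_n^0(\C)})^{2k}\bigr) \ge (\lambda_2^{(n)})^{2k}$, one estimates $\mathbb{E}\bigl[\Tr((\Phi^{(n)})^{2k})\bigr] = (2d)^{-2k}\sum_{|w|=2k}\mathbb{E}\bigl[|\Tr(U_w^{(n)})|^2\bigr]$ by Weingarten calculus with $k = k(n) \to \infty$ and tracks the contribution of words that do not reduce to the identity carefully enough to conclude $\mathbb{E}\bigl[(\lambda_2^{(n)})^{2k(n)}\bigr]^{1/2k(n)} \to \frac{\sqrt{2d-1}}{d}$; this bookkeeping is where the difficulty lies. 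The cleaner route is strong asymptotic freeness: by Haagerup--Thorbj\o rnsen and Collins--Male, $(U_1^{(n)},\dots,U_d^{(n)})$ converges \emph{strongly} to a free Haar unitary family, and the refinement of this that also controls the conjugate variables --- equivalently the norms of polynomials in the $U_j^{(n)} \otimes \overline{U_j^{(n)}}$, the input underlying Pisier's treatment of quantum expanders \cite{PisierExpanders} (see also \cite{BC2022}) --- yields $\norm{\Phi^{(n)}|_{M_n^0(\C)}} \to \frac{\sqrt{2d-1}}{d}$ almost surely, and symmetry of the limiting measure then forces $\lambda_2^{(n)} \to \frac{\sqrt{2d-1}}{d}$. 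In either approach Kesten's theorem and the moment computation are routine; the genuine difficulty is this strong-convergence (no-outlier) estimate.
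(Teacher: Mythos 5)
Your sketch is correct in outline, but it takes a noticeably different route from the paper. The paper's proof of Theorem \ref{thm: Hastings} is essentially a citation check: Hastings states convergence in probability, and the authors observe that his explicit estimates already give almost sure convergence --- the lower bound $\liminf_n \lambda_2^{(n)} \geq \sqrt{2d-1}/d$ comes from a \emph{deterministic} bound in \cite{Hastings2007} (so no probabilistic argument is needed there at all), and the upper bound comes from his tail estimate $\mathbb{P}(\lambda_2^{(n)} > c\sqrt{2d-1}/d) \leq c^{-(1/4)n^{2/15}}(1+O(\log(n)n^{-2/15}))$, which is summable, so Borel--Cantelli applies. You instead reconstruct the proof architecture: concentration of measure (with the correct $O(\sqrt{n})$ Lipschitz constant for $\lambda_2^{(n)}$ via Weyl) to reduce to expectations, the moment computation $\frac{1}{n^2}\Tr((\Phi^{(n)})^{2k}) \to$ the $2k$-th moment of the Kesten measure of the $2d$-regular tree via asymptotic $*$-freeness for the lower bound, and then the no-outlier upper bound deferred to Hastings's trace method with growing $k(n)$ or to the strong convergence of $U_j^{(n)} \otimes \overline{U_j^{(n)}}$ (correctly noting that plain Collins--Male strong freeness does not suffice and one needs the tensor-with-conjugate refinement as in \cite{BC2022} and \cite{PisierExpanders}). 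Both proofs ultimately lean on the same external source for the genuinely hard estimate, so there is no gap in the sense that matters here; what your version buys is a conceptual skeleton that makes clear which parts are soft (concentration, Kesten lower bound) and which are not, and your concentration-plus-Borel--Cantelli upgrade to almost sure convergence is a legitimate alternative to the paper's direct use of Hastings's summable tail bound --- though note that for the lower bound the paper's observation that Hastings's bound is deterministic is even cheaper than your spectral-distribution argument.
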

	
	\begin{proof}
		The situation above is the Hermitian case with $D = 2d$ in Hastings's terminology.  Hastings \cite{Hastings2007} at the top of the second page asserts convergence in probability of $\lambda_2^{(n)}$.  Hastings's arguments in fact yield almost sure convergence.  Indeed, $\liminf_{n \to \infty} \lambda_2^{(n)} \geq \sqrt{2d-1} / d$ follows from a deterministic lower bound on $\lambda_2^{(n)}$ in \cite[eq (12)]{Hastings2007}  which gives (using $\lambda_H=2\sqrt{D-1}/D=\sqrt{2d-1}/d$, see \cite[(3)]{Hastings2007}),  $\lambda_2 \geq \frac{2d-1}d(1-O(\ln(\ln(n))/\ln(n))$.
		
		For the converse inequality,  at the end of \S II.F, Hastings shows that for $c >1$, the probability that $\lambda_2^{(n)}$ is greater than $c \sqrt{2d - 1}/ d$ is bounded by
		\[
		c^{-(1/4)n^{2/15}} (1 + O(\log(n)n^{-2/15})).
		\]
		Because this is summable, the Borel-Cantelli lemma implies that almost surely we have $\limsup_{n \to \infty} \lambda_2^{(n)} \leq c \lambda_H$.  Since $c > 1$ was arbitrary, this yields almost sure convergence.
	\end{proof}

	\begin{cor} \label{cor: Hastings spectral gap}
		Let $d > 1$, and let $U_1^{(n)}$, \dots, $U_d^{(n)}$ be Haar random unitary matrices. Then almost surely, for sufficiently large $n$, we have for all $A \in M_n(\C)$,
		\begin{equation} \label{eq: matrix spectral gap}
			\norm{A - \tr_n(A)}_2^2 < \frac{d}{(d-1)^2} \sum_{j=1}^d \norm{[A,U_j^{(n)}]}_2^2.
		\end{equation} 
	\end{cor}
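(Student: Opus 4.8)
The plan is to read the estimate \eqref{eq: matrix spectral gap} off Hastings's theorem (Theorem~\ref{thm: Hastings}). First I would reduce to the case $\tr_n(A)=0$, which is harmless: replacing $A$ by $A-\tr_n(A)1$ changes neither side of \eqref{eq: matrix spectral gap}, since scalars commute with each $U_j^{(n)}$ and $\norm{A-\tr_n(A)}_2$ is unaffected. For such $A$ I would use the identity computed exactly as in the proof of Lemma~\ref{lem: spectral gap} (taken with $\cN=\cM=M_n(\C)$, so $\cN'\cap\cM=\C 1$): writing $T\colon L^2(M_n(\C))\to L^2(M_n(\C))^d$ for $T(A)=([A,U_1^{(n)}],\dots,[A,U_d^{(n)}])$, one has $T^*T = 2d(\id - \Phi^{(n)})$, where $\Phi^{(n)}$ is the self-adjoint map of Theorem~\ref{thm: Hastings}. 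Hence
\[
\sum_{j=1}^d \norm{[A,U_j^{(n)}]}_2^2 = \scal{A, T^*T A} = 2d\,\scal{A,(\id-\Phi^{(n)})A}.
\]

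Next I would apply the spectral theorem to $\Phi^{(n)}$: its largest eigenvalue is $1$ with eigenspace $\C 1$, so on the orthogonal complement $\{A:\tr_n(A)=0\}$ we have $\scal{A,\Phi^{(n)}A}\le \lambda_2^{(n)}\norm{A}_2^2$, and therefore $\scal{A,(\id-\Phi^{(n)})A}\ge (1-\lambda_2^{(n)})\norm{A}_2^2$. This uses only the one-sided bound coming from $\lambda_2^{(n)}$, so it never needs control of the most negative eigenvalue of $\Phi^{(n)}$, and in particular there is no need to route through the two-sided quantum-expander estimate \eqref{eq.4.10} or Corollary~\ref{cor: quantum expanders}. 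Combining the two displays, whenever $\lambda_2^{(n)}<1$ we get, for all $A\in M_n(\C)$,
\[
\norm{A-\tr_n(A)}_2^2 \le \frac{1}{2d(1-\lambda_2^{(n)})}\sum_{j=1}^d \norm{[A,U_j^{(n)}]}_2^2 .
\]
By Theorem~\ref{thm: Hastings}, almost surely $\lambda_2^{(n)}\to \sqrt{2d-1}/d$, which is $<1$ because $d>1$; hence almost surely $\lambda_2^{(n)}<1$ for all large $n$ and $\frac{1}{2d(1-\lambda_2^{(n)})}\to \frac{1}{2(d-\sqrt{2d-1})}$.

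It then remains to verify the purely algebraic inequality $\frac{1}{2(d-\sqrt{2d-1})} < \frac{d}{(d-1)^2}$, which I expect to be the only step requiring any care (the analytic content is entirely imported from Theorem~\ref{thm: Hastings}). Clearing denominators, it is equivalent to $2d\sqrt{2d-1}<d^2+2d-1$; since $d^2+2d-1>0$ for $d\ge 1$, squaring makes it equivalent to $4d^2(2d-1)<(d^2+2d-1)^2$, i.e.\ to $0<(d^2+2d-1)^2-4d^2(2d-1)=(d-1)^4$, which holds because $d>1$. Therefore, almost surely, $\frac{1}{2d(1-\lambda_2^{(n)})}<\frac{d}{(d-1)^2}$ for all sufficiently large $n$; inserting this into the previous display gives the strict inequality \eqref{eq: matrix spectral gap} for every non-central $A$ (for $A\in\C 1$ both sides vanish). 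Thus the corollary is essentially routine once Theorem~\ref{thm: Hastings} is in hand, the only pleasant surprise being that the constant $d/(d-1)^2$ is exactly the one that makes the final inequality collapse to $(d-1)^4>0$.
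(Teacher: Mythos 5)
Your proof is correct and follows essentially the same route as the paper's: both pass from Hastings's theorem to the identity $T^*T=2d(\id-\Phi^{(n)})$ (which is exactly the computation inside Lemma~\ref{lem: spectral gap}), obtain the constant $\tfrac{1}{2d(1-\lambda_2^{(n)})}\to\tfrac{1}{2(d-\sqrt{2d-1})}$, and check this is $<\tfrac{d}{(d-1)^2}$ --- the paper does the last step in one line by rationalizing the denominator and using $\sqrt{2d-1}<d$, whereas you square and land on $(d-1)^4>0$, which is equivalent. Your parenthetical caveat that the strict inequality degenerates to $0<0$ for scalar $A$ is a fair (and slightly more careful) reading than the paper's own proof, which does not address strictness.
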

	
	\begin{proof}
		Let $\Phi^{(n)}$ be as in the previous theorem.  Note that $\ker(\tr_n)$ is the orthogonal complement of $\C 1$, which is the $\lambda_1^{(n)}$-eigenspace of $\Phi^{(n)}$.  Hence,
		\[
		\norm{ \sum_{j=1}^d (U_j^{(n)} (A - \tr_n(A)) (U_j^{(n)})^* + (U_j^{(n)})^* (A - \tr_n(A)) U_j^{(n)}) }_2 \leq 2d \lambda_2^{(n)} \norm{A - \tr_n(A)}_2.
		\]
		This means that $U_1^{(n)}$, \dots, $U_d^{(n)}$ satisfy Lemma \ref{lem: spectral gap} (2) with $\cN = \mathbb{C}$, $\cM = M_n(\mathbb{C})$, and $2d - \epsilon^{(n)} = 2d \lambda_2^{(n)}$.  Hence, by Lemma \ref{lem: spectral gap}, the $U_j^{(n)}$ witness spectral gap for $\mathbb{C} \subseteq M_n(\mathbb{C})$ with constant $\epsilon^{(n)} = 2d(1 - \lambda_2^{(n)})$.  By Hastings's theorem, almost surely,
		\[
		\frac{1}{\epsilon^{(n)}} = \frac{1}{2d(1 - \lambda_2^{(n)})} \to \frac{1}{2d - 2 \sqrt{2d-1}} = \frac{d + \sqrt{2d-1}}{2(d^2 - 2d + 1)}
		\]
		We can bound the right-hand side by $d / (d-1)^2$ because $\sqrt{2d - 1} < d$.
	\end{proof}

	\subsection{Controlling the relative commutants}
	
	\begin{lem} \label{lem: first distance estimate}
		Let $\cA_n = 1_{M_n(\C)} \otimes \cM^{1/n}$ and let $\cA = \prod_{n \to \cU} \cA_n$.  Then almost surely, for all $a \in \cM^{\cU}$,
		\[
		d(a,\cA)^2 \leq \frac{3}{4} \sum_{j=1}^3 \norm{[X_j,a]}_2^2.
		\]
		In particular, $\cA = \{\mathbf{X}\}' \cap \cM^{\cU}$.
	\end{lem}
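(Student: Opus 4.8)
The plan is to reduce the estimate to the matrix-level spectral gap already established. Applying Corollary~\ref{cor: Hastings spectral gap} with $d = 3$, so that $d/(d-1)^2 = 3/4$, we obtain an almost sure event on which, for all sufficiently large $n$ and all $A \in M_n(\C)$,
\[
\norm{A - \tr_n(A)}_2^2 \leq \frac{3}{4} \sum_{j=1}^3 \norm{[A, U_j^{(n)}]}_2^2.
\]
I would fix an outcome $\omega$ in this event, so that everything below is deterministic.

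The next step is to amplify this from $M_n(\C)$ to $\cM = M_n(\C) \otimes \cM^{1/n}$. Using $\tau_{\cM} = \tr_n \otimes \tau_{\cM^{1/n}}$ we identify $L^2(\cM)$ with the Hilbert space tensor product $L^2(M_n(\C)) \otimes L^2(\cM^{1/n})$; under this identification the trace-preserving conditional expectation $E_{\cA_n}$ onto $\cA_n = 1_{M_n(\C)} \otimes \cM^{1/n}$ is $\tr_n \otimes \id$ (the first factor being the orthogonal projection of $L^2(M_n(\C))$ onto $\C 1$), and, since $X_j^{(n)} = U_j^{(n)} \otimes 1_{\cM^{1/n}}$, the map $a \mapsto [X_j^{(n)}, a]$ acts as $[U_j^{(n)}, \,\cdot\,] \otimes \id$. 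Expanding $a \in \cM$ as $\sum_i a_i \otimes e_i$ along an orthonormal basis $(e_i)$ of $L^2(\cM^{1/n})$, with $a_i \in L^2(M_n(\C)) = M_n(\C)$, one gets $\norm{a - E_{\cA_n}(a)}_2^2 = \sum_i \norm{a_i - \tr_n(a_i)}_2^2$ and $\norm{[X_j^{(n)}, a]}_2^2 = \sum_i \norm{[a_i, U_j^{(n)}]}_2^2$. Applying the matrix estimate to each $a_i$ and summing over $i$ yields, for all sufficiently large $n$ and all $a \in \cM$,
\[
\norm{a - E_{\cA_n}(a)}_2^2 \leq \frac{3}{4} \sum_{j=1}^3 \norm{[X_j^{(n)}, a]}_2^2.
\]
Equivalently, if $T$ denotes the commutator operator from the proof of Lemma~\ref{lem: spectral gap}, the corresponding operator on $L^2(\cM)$ is $T \otimes \id$, whose compression to $\ker(\tr_n) \otimes L^2(\cM^{1/n})$ has the same spectral lower bound as the compression of $T$ to $\ker(\tr_n)$.

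Finally I would transfer the estimate to the ultraproduct. For $a = [a^{(n)}]_{n \to \cU} \in \cM^{\cU}$, put $b = [E_{\cA_n}(a^{(n)})]_{n \to \cU} \in \cA$. The set of $n$ for which the previous display holds is cofinite, hence a member of $\cU$, and the $\norm{\cdot}_2$-distance in an ultraproduct is the $\cU$-limit of the distances of representatives, so
\[
d(a,\cA)^2 \leq \norm{a - b}_2^2 = \lim_{n \to \cU} \norm{a^{(n)} - E_{\cA_n}(a^{(n)})}_2^2 \leq \lim_{n \to \cU} \frac{3}{4} \sum_{j=1}^3 \norm{[X_j^{(n)}, a^{(n)}]}_2^2 = \frac{3}{4} \sum_{j=1}^3 \norm{[X_j,a]}_2^2.
\]
For the final claim, the inclusion $\cA \subseteq \{\mathbf{X}\}' \cap \cM^{\cU}$ is clear because each $\cA_n$ commutes with $U_j^{(n)} \otimes 1_{\cM^{1/n}}$; conversely, if $a$ commutes with $X_1, X_2, X_3$, the displayed inequality forces $d(a,\cA) = 0$, and $\cA$ is a von Neumann subalgebra of $\cM^{\cU}$, hence $\norm{\cdot}_2$-closed, so $a \in \cA$.

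The only substantial input here is Hastings's theorem, which is already absorbed into Corollary~\ref{cor: Hastings spectral gap}; the step that needs care is the amplification, namely the observation that tensoring on one side by an arbitrary tracial von Neumann algebra does not degrade the spectral gap constant, together with the bookkeeping that converts the matrix inequality into the distance bound inside $\cM^{\cU}$.
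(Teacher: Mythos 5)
Your proof is correct and follows essentially the same route as the paper's: apply Corollary~\ref{cor: Hastings spectral gap} with $d=3$, tensorize the resulting matrix inequality with the identity on $L^2(\cM^{1/n})$ (you spell out the orthonormal-basis expansion that the paper delegates to a cited lemma), and pass to the ultralimit. The only cosmetic difference is that you exhibit $[E_{\cA_n}(a^{(n)})]_{n\to\cU}$ as a competitor for the distance rather than invoking that conditional expectations commute with ultraproducts; both give the same bound.
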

	
	\begin{proof}
		By Corollary \ref{cor: Hastings spectral gap} with $d = 3$, for sufficiently large $n$ and $A \in M_n(\C)$, we have almost surely
		\[
		\norm{A - \tr_n(A)}_2^2 \leq \frac{3}{4} \sum_{j=1}^3 \norm{[A,U_j^{(n)}]}_2^2.
		\]
		Because this is an inequality between linear operators on a Hilbert space, we may tensorize with the identity on $L^2(\cM^{1/n})$ (see e.g.\ \cite[Lemma 4.18]{GJKEP2023}), to obtain for $a \in M_n(\C) \otimes \cM^{1/n} = \cM$ that
		\[
		\norm{a - E_{\cA_n}[a]}_2^2 \leq \frac{3}{4} \sum_{j=1}^3 \norm{[a,X_j^{(n)}]}_2^2 \text{ for } a \in \cM.
		\]
		Then in the ultralimit, we obtain
		\[
		\norm{a - E_{\cA}[a]}_2^2 \leq \frac{3}{4} \sum_{j=1}^3 \norm{[a,X_j]}_2^2 \text{ for } a \in \cM^{\cU},
		\]
		since conditional expectations commute with ultraproducts.  This is the desired estimate for $\cA$.  For the final claim, $\cA \subseteq \{\mathbf{X}\}' \cap \cM^\cU$ is immediate from the construction of $\mathbf{X}$, and the opposite inclusion follows from the spectral gap estimate that we just proved.
	\end{proof}
	
	The analogous statement for $\mathbf{Y}$ is more delicate, and this is where we use the specific way that $\mathbf{X}$ and $\mathbf{Y}$ were constructed from $U_1^{(n)}$, \dots, $U_4^{(n)}$; this part of the argument was simplified due to the suggestion of Adrian Ioana and it is a close relative to the proof of \cite[Lemma~4.6]{ioana2021almost}.
	
	\begin{lem} \label{lem: second distance estimate} For a II$_1$ factor $\cM$ and for $\cB$ and $\mathbf{Y}$ as defined in \S\ref{sec: outline},  almost surely, for $b \in \cM^{\cU}$,
		\[
		d(b,\cB)^2 \leq 7 \sum_{j=1}^3 \norm{[Y_j,b]}_2^2.
		\]
		In particular, $\{\mathbf{Y}\}' \cap \cM^{\cU} = \cB$.
	\end{lem}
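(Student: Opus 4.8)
The plan is to reduce everything to one spectral-gap inequality in $M_{2n}(\C)$ and then tensorize and pass to the ultraproduct exactly as in the proof of Lemma~\ref{lem: first distance estimate} (conditional expectations commute with ultraproducts). Writing $\cM = M_{2n}(\C) \otimes \cM^{1/2n}$ as in \S\ref{sec: MC factor proof} and viewing $A \in M_{2n}(\C) = M_2(\C) \otimes M_n(\C)$ as a $2\times2$ block matrix $A = (A_{ij})$ with $A_{ij} \in M_n(\C)$, set $A_{ij}^\circ = A_{ij} - \tr_n(A_{ij})1_n$. It suffices to show that, almost surely, for all large $n$ and all $A \in M_{2n}(\C)$,
\[
\tfrac12\bigl(\norm{A_{11}^\circ}_2^2 + \norm{A_{12}}_2^2 + \norm{A_{21}}_2^2 + \norm{A_{22}^\circ}_2^2\bigr) \leq 7\sum_{j=1}^3 \norm{[\hat Y_j^{(n)}, A]}_2^2,
\]
where $\hat Y_1^{(n)} = \diag(U_1^{(n)}, U_1^{(n)})$, $\hat Y_2^{(n)} = \diag(U_2^{(n)}, U_2^{(n)})$, $\hat Y_3^{(n)} = \diag(U_3^{(n)}, U_4^{(n)})$: the left side equals $\norm{A - E_{\C 1 \oplus \C 1}(A)}_2^2$ for the conditional expectation onto the block-scalar subalgebra $\C1_n \oplus \C1_n$, and tensoring the corresponding operator inequality on $L^2(M_{2n}(\C))$ with $\mathrm{id}_{L^2(\cM^{1/2n})}$ and taking the ultralimit produces the statement of the lemma. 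Expanding the commutators blockwise, $2\sum_{j=1}^3\norm{[\hat Y_j^{(n)}, A]}_2^2$ is the sum, over all four blocks $(i,j)$, of $\norm{[U_1,A_{ij}]}_2^2 + \norm{[U_2,A_{ij}]}_2^2$, together with $\norm{[U_3,A_{11}]}_2^2$, $\norm{U_3 A_{12} - A_{12}U_4}_2^2$, $\norm{U_4 A_{21} - A_{21}U_3}_2^2$, and $\norm{[U_4,A_{22}]}_2^2$ (I suppress the superscripts $(n)$).

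For the diagonal blocks I would invoke Corollary~\ref{cor: Hastings spectral gap} with $d = 3$ twice: applied to the independent Haar triple $\{U_1,U_2,U_3\}$ it gives, almost surely for large $n$, $\norm{A_{11}^\circ}_2^2 \leq \tfrac34\bigl(\norm{[U_1,A_{11}]}_2^2 + \norm{[U_2,A_{11}]}_2^2 + \norm{[U_3,A_{11}]}_2^2\bigr)$, and applied to $\{U_1,U_2,U_4\}$ it gives the analogous bound for $A_{22}^\circ$. Since $\tfrac34 < 7$, these terms are absorbed with room to spare.

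The off-diagonal blocks are the crux, since for $A_{12}$ one has only the honest commutators $[U_1,A_{12}]$, $[U_2,A_{12}]$ plus the \emph{twisted} commutator $U_3 A_{12} - A_{12}U_4$. Following Ioana's suggested simplification, I would argue in two steps. Step one: Corollary~\ref{cor: Hastings spectral gap} with $d=2$ applied to $\{U_1,U_2\}$ gives $\norm{A_{12}^\circ}_2^2 < 2\bigl(\norm{[U_1,A_{12}]}_2^2 + \norm{[U_2,A_{12}]}_2^2\bigr)$, so $A_{12}$ is close to the scalar $\lambda 1_n$ with $\lambda = \tr_n(A_{12})$. Step two: from $U_3 A_{12} - A_{12}U_4 = (U_3 A_{12}^\circ - A_{12}^\circ U_4) + \lambda(U_3 - U_4)$ and the triangle inequality, $|\lambda|\,\norm{U_3-U_4}_2 \leq \norm{U_3 A_{12} - A_{12}U_4}_2 + 2\norm{A_{12}^\circ}_2$. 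The crucial input is that $\norm{U_3^{(n)}-U_4^{(n)}}_2^2 = 2 - 2\re\tr_n\bigl((U_3^{(n)})^* U_4^{(n)}\bigr) \to 2$ almost surely, since $(U_3^{(n)})^* U_4^{(n)}$ is Haar distributed and $\re\tr_n$ of a Haar unitary tends to $0$ almost surely by concentration (Proposition~\ref{prop: concentration}) and Borel--Cantelli. Hence, for large $n$, dividing by $\norm{U_3-U_4}_2$ and combining with $\norm{A_{12}}_2^2 = |\lambda|^2 + \norm{A_{12}^\circ}_2^2$ and a split $2\sqrt{pq} \leq \theta p + \theta^{-1} q$ with $\theta$ tuned in, say, $[\tfrac14,\tfrac12]$, yields $\norm{A_{12}}_2^2 \leq 7\bigl(\norm{[U_1,A_{12}]}_2^2 + \norm{[U_2,A_{12}]}_2^2 + \norm{U_3 A_{12} - A_{12}U_4}_2^2\bigr)$, and symmetrically for $A_{21}$. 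Summing the blockwise bounds and dividing by $2$ gives the displayed inequality with constant $7$. The ``in particular'' claim is then immediate: $\cB \subseteq \{\mathbf Y\}' \cap \cM^\cU$ holds by construction (each $Y_j$ commutes with block-scalar elements tensored with $\cM^{1/2n}$), and if $b$ commutes with $\mathbf Y$ then $d(b,\cB)^2 \leq 7\sum_j\norm{[Y_j,b]}_2^2 = 0$, so $b \in \cB$.

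The main difficulty is exactly this off-diagonal estimate together with the constant bookkeeping: one must run Corollary~\ref{cor: Hastings spectral gap} at both $d=2$ and $d=3$ and exploit that $\norm{U_3-U_4}_2^2$ is nearly $2$ in order to force the two coefficients produced by the off-diagonal argument below $7$, so the precise order of the two steps --- first pushing $A_{12}$ toward a scalar, then bounding the scalar --- is what makes the constant come out.
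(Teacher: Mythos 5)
Your proposal follows essentially the same route as the paper: the same blockwise reduction in $M_{2n}(\C)$, Corollary~\ref{cor: Hastings spectral gap} with $d=3$ for the diagonal blocks and $d=2$ for the off-diagonal ones, and the same two-step treatment of $A_{12}$ (first push it toward the scalar $\tr_n(A_{12})$, then control that scalar via the twisted commutator and the almost sure limit $\norm{U_3^{(n)}-U_4^{(n)}}_2^2 \to 2$). The only slip is quantitative: with the split $(2x+y)^2 \le (1+\theta)4x^2+(1+\theta^{-1})y^2$, a value of $\theta$ in $[\tfrac14,\tfrac12]$ gives a coefficient strictly larger than $7$ on the commutator sum (e.g.\ $\theta=\tfrac14$ yields $\tfrac{68}{9}$); one needs roughly $\theta\in[\tfrac{5}{58},\tfrac18]$, and the paper takes $\theta=\tfrac18$ together with the bound $\norm{U_3^{(n)}-U_4^{(n)}}_2^2\ge \tfrac95$ to land exactly on $7$.
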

	
	\begin{proof}
		To prove the estimate for $\cB$, the same tensorization and ultralimit argument as in  the proof of Lemma~\ref{lem: first distance estimate} apply, and so it suffices to show that for $B = \begin{bmatrix} B_{1,1} & B_{1,2} \\ B_{2,1} & B_{2,2} \end{bmatrix} \in M_{2n}(\C)$, we have
		\begin{align*}
			\norm{\begin{bmatrix} B_{1,1} & B_{1,2} \\ B_{2,1} & B_{2,2} \end{bmatrix} - \begin{bmatrix} \tr_n(B_{1,1}) & 0 \\ 0 & \tr_n(B_{2,2}) \end{bmatrix}}_2^2
			&\leq 7 \sum_{j=1}^3 \norm{[B,Y_j^{(n)}]}_2^2 \\
			&= 7 \Bigg( \sum_{j=1}^2 \norm{ \begin{bmatrix}
					U_j^{(n)} B_{1,1} - B_{1,1} U_j^{(n)} & U_j^{(n)} B_{1,2} - B_{1,2} U_j^{(n)} \\
					U_j^{(n)} B_{2,1} - B_{2,1} U_j^{(n)} & U_j^{(n)} B_{2,2} - B_{2,2} U_j^{(n)}
			\end{bmatrix}}_2^2 \\
			&+ \norm{ \begin{bmatrix}
					U_3^{(n)} B_{1,1} - B_{1,1} U_3^{(n)} & U_3^{(n)} B_{1,2} - B_{1,2} U_4^{(n)} \\
					U_4^{(n)} B_{2,1} - B_{2,1} U_3^{(n)} & U_4^{(n)} B_{2,2} - B_{2,2} U_4^{(n)}
			\end{bmatrix}}_2^2 \Bigg)
		\end{align*}
		Equivalently, we want to show that
		\begin{align*}
			\norm{B_{1,1} - \tr_n(B_{1,1})}_2^2 &+ \norm{B_{2,2} - \tr_n(B_{2,2})}_2^2 + \norm{B_{1,2}}_2^2 + \norm{B_{2,1}}_2^2 \\
			&\leq 7 \Big( \sum_{j=1}^3 \norm{[B_{1,1},U_j^{(n)}]}_2^2 +  \sum_{j=1}^3  \norm{[B_{2,2},U_j^{(n)}]}_2^2 \\
			& \quad +  \sum_{j=1}^2 \norm{[B_{1,2},U_j^{(n)}]}_2^2 + \norm{U_3^{(n)} B_{1,2} - B_{1,2} U_4^{(n)}}_2^2 \\
			& \quad +  \sum_{j=1}^2 \norm{[B_{2,1},U_j^{(n)}]}_2^2 + \norm{U_4^{(n)} B_{2,1} - B_{2,1} U_3^{(n)}}_2^2 \Big)
		\end{align*}
		From Corollary \ref{cor: Hastings spectral gap}, we already know
		\[
		\norm{B_{1,1} - \tr_n(B_{1,1})}_2^2 \leq \frac{3}{4} \sum_{j=1}^3 \norm{[B_{1,1},U_j^{(n)}]}_2^2,
		\]
		and similarly for the $B_{2,2}$ term.  Thus, it remains to estimate the $B_{1,2}$ and $B_{2,1}$ terms.  We will handle the $B_{1,2}$ term and show that
		\begin{equation} \label{eq: desired B12 estimate}
			\norm{B_{1,2}}_2^2 \leq 7 \left( \sum_{j=1}^2 \norm{[B_{1,2},U_j^{(n)}]}_2^2 + \norm{U_3^{(n)} B_{1,2} - B_{1,2} U_4^{(n)}}_2^2 \right);
		\end{equation}
		the argument for the $B_{2,1}$ term is symmetrical.  First, we note that by Corollary \ref{cor: Hastings spectral gap} with $d = 2$, we have almost surely for sufficiently large $n$,
		\begin{equation}\label{eq.B12}
			\norm{B_{1,2} - \tr_n(B_{1,2})}_2^2 \leq 2 \sum_{j=1}^2 \norm{[B_{1,2},U_j^{(n)}]}_2^2.
		\end{equation}
		Thus, it remains to estimate $\tr_n(B_{1,2})$.  We note that
		\begin{align*}
			|\tr_n(B_{1,2})| \norm{U_3^{(n)} - U_4^{(n)}}_2 &= \norm{U_3^{(n)} \tr_n(B_{1,2}) - \tr_n(B_{1,2}) U_4^{(n)} }_2 \\
			&\leq \norm{U_3^{(n)}(B_{1,2} - \tr_n(B_{1,2})) - (B_{1,2} - \tr_n(B_{1,2})) U_4^{(n)}}_2 \\
			& \quad + \norm{U_3^{(n)} B_{1,2} - B_{1,2} U_4^{(n)}}_2 \\
			&\leq 2 \norm{B_{1,2} - \tr_n(B_{1,2})}_2 + \norm{U_3^{(n)} B_{1,2} - B_{1,2} U_4^{(n)}}_2.
		\end{align*}
		Note that $\mathbb{E} \tr_n((U_3^{(n)})^* U_4^{(n)}) = 0$, and so by Proposition \ref{prop: concentration}, we have $\tr_n((U_3^{(n)})^* U_4^{(n)}) \to 0$ almost surely, and thus $\norm{U_3^{(n)} - U_4^{(n)}}_2^2=\norm{1-(U_3^{(n)})^* U_4^{(n)}}_2^2 \to 2$ almost surely, and hence is eventually larger than $9/5$.
		Hence, we have that for sufficiently large $n$,
		\[
		|\tr_n(B_{1,2})| \leq  \sqrt{5/9} \left( 2 \norm{B_{1,2} - \tr_n(B_{1,2})}_2 + \norm{U_3^{(n)} B_{1,2} - B_{1,2} U_4^{(n)}}_2 \right).
		\]
		By the Cauchy-Schwarz inequality and our previous estimate for $\norm{B_{1,2} - \tr_n(B_{1,2})}_2$,
		\begin{align*}
			|\tr_n(B_{1,2})|^2 &\leq  \frac{5}{9} (1 + 1/8) \left( 4 \norm{B_{1,2} - \tr_n(B_{1,2})}_2^2 + 8 \norm{U_3^{(n)} B_{1,2} - B_{1,2} U_4^{(n)}}_2^2 \right) \\
			&\leq  \frac{5}{8} \left( 4 \cdot 2 \sum_{j=1}^2 \norm{[B_{1,2},U_j^{(n)}]}_2^2 + 8 \norm{U_3^{(n)} B_{1,2} - B_{1,2} U_4^{(n)}}_2^2 \right) \\
			&= 5 \sum_{j=1}^2 \norm{[B_{1,2},U_j^{(n)}]}_2^2 + 5 \norm{U_3^{(n)} B_{1,2} - B_{1,2} U_4^{(n)}}_2^2
		\end{align*}
		Hence, using this and \eqref{eq.B12},
		\begin{align*}
			\norm{B_{1,2}}_2^2 &= \norm{B_{1,2} - \tr_n(B_{1,2})}_2^2 + \tr_n(B_{1,2})^2 \\
			&\leq  \left(2 + 5 \right) \sum_{j=1}^2 \norm{[B_{1,2},U_j^{(n)}]}_2^2 + 5 \norm{U_3^{(n)} B_{1,2} - B_{1,2} U_4^{(n)}}_2^2 \\
			&\leq 7 \left( \sum_{j=1}^2 \norm{[B_{1,2},U_j^{(n)}]}_2^2 + \norm{U_3^{(n)} B_{1,2} - B_{1,2} U_4^{(n)}}_2^2 \right)
		\end{align*}
		as desired.
	\end{proof}
	
	\begin{remark}
		In the spirit of Goldbring's work on spectral gap and definability \cite{Goldbring2023spectralgap}, our bound on the distance to the relative commutant in Lemma \ref{lem: first distance estimate} shows that $\cA = \{\mathbf{X}\}' \cap \cM^\cU$ is a definable set with parameters $\mathbf{X}$ (see \S \ref{sec: definable}).  Similarly, Lemma \ref{lem: second distance estimate} implies that $\{\mathbf{Y}\}' \cap \cM^{\cU}$ is definable with parameters $\mathbf{Y}$.
	\end{remark}
	
	\subsection{Conclusion of the proof of Theorem \ref{thm: main MC} in the $\mathrm{II}_1$ factor case} \label{sec: MC factor proof conclusion}
	
	\begin{proof}[Proof of Theorem \ref{thm: main MC} in the $\mathrm{II}_1$ factor case]
		Referring to the outline of the proof stated in \S\ref{sec: outline}, we have shown (1) in Lemma \ref{lem: inf formula limit}, (2) in Lemma \ref{lem: first distance estimate}, and (3) in Lemma \ref{lem: second distance estimate}.  Item (1) shows that, almost surely, $\phi^{\cM^{\cU}}(\mathbf{X}) \leq \phi^{\cM^{\cU}}(\mathbf{Y})$ for all $\inf$-formulas.  If $\cM$ were model complete, then $\mathbf{X}$ and $\mathbf{Y}$ would have the same type by Lemma~\ref{lem: type MC}.  Hence, to finish the argument, it suffices to show that $\mathbf{X}$ and $\mathbf{Y}$ do not have the same type.
		
		In fact, we claim that $\mathbf{X}$ and $\mathbf{Y}$ do not even have the same two-quantifier type.  Consider the formula
		\begin{multline*}
			\psi(x_1, x_2, x_3) = \\ \inf_{z_1} \left[ 1 - \norm{z_1}_2^2 + |\tr(z_1)|^2 + 7 \sum_{j=1}^3 \norm{[x_j,z_1]}_2^2 + \sup_{z_2} \left[ \norm{[z_1,z_2]}_2^2 \dot{-} 28 \sum_{j=1}^3 \norm{[x_j,z_2]}_2^2 \right] \right],
		\end{multline*}
		where $z_1$ and $z_2$ range over the unit ball. Then the condition $\psi(x_1,x_2, x_3)=0$ attempts to assert the existence of $z_1$ with $\|z_1\|_2=1$ and $\tr(z_1)=0$ such that $z_1$ commutes with $x_j$ for $j = 1,2,3$ and also commutes with every $z_2$ in the relative commutant of $\{x_1,x_2, x_3\}$.\footnote{The statement does not literally assert this, but it asserts the first two statements in an approximate sense, and the last part is necessarily imperfect because there is no implication in continuous logic, but we will see that it serves the purpose.}  We will find a self-adjoint unitary $z_1$ that commutes with $Y_j$ for $j = 1,2,3$, has zero trace, and commutes with everything in the relative commutant of $\{Y_1,Y_2, Y_3\}$; this will suffice to show that $\psi^{\cM^\cU}(\mathbf{Y})=0$. 
		
		Indeed, $\{\bf{Y}\}' \cap \cM^\cU = \cB$ is a direct sum of two copies of $\prod_{n \to \cU} \cM^{1/2n}$ so it has a central projection $p$ of trace $1/2$.  Let $z_1 = 2p - 1$, so that $\norm{z_1}_2^2 = 1$ and $\tr(z_1) = 0$ and $\sum_{j=1}^3 \norm{[Y_j,z_1]}_2 = 0$.  Also for every $z_2$, we have
		\[
		\norm{[z_1,z_2]}_2^2 \leq \left( \norm{[z_1,E_{\cB}[z_2]]}_2 + 2 \norm{z_1} d(z_2,\cB) \right)^2 = 4 d(z_2,\cB)^2 \leq 28 \sum_{j=1}^3 \norm{[Y_j,z_2]}_2^2,
		\]
		because of Lemma \ref{lem: second distance estimate} and the fact that $[z_1,E_{\cB}[z_2]] = 0$.
		
		On the other hand, we claim that $\psi^{\cM^{\cU}}(\mathbf{X}) = 1$.  Because the ultraproduct $\cM^{\cU}$ is countably saturated (see \S \ref{sec: model theory prelims}), there is some $z_1 \in \cM^{\cU}$ that attains the infimum in the formula.  Let $z_1' = E_{\cA}[z_1]$.  Because $\cA$ is a factor, a Dixmier averaging argument (see e.g.,  \cite[Lemma 4.2]{FHS2013}) shows that
		\[
		\norm{z_1'}_2^2 - |\tr(z_1')|^2 
		=\|z-\tr(z)\|_2^2\leq \sup_{z_2 \in \cA_1} \norm{[z_1',z_2]}_2^2,
		\]
		where $\cA_1$ is the unit ball of $\cA$.  Using choices of $z_2 \in \cA$ witnessing this inequality as candidates for the supremum in $\psi$, we conclude
		\[
		\psi^{\cM^{\cU}}(\mathbf{X}) \geq 1 - \norm{z_1}_2^2 + |\tr(z_1)|^2 + d(z_1,\cA)^2 + \left[ \norm{z_1'}_2^2 - |\tr(z_1')|^2 + 0 \right],
		\]
		where we have also applied the spectral gap inequality from Lemma \ref{lem: first distance estimate} to get the $d(z_1, \cA)^2$ term.  Noting that $d(z_1,\cA)^2 = \norm{z_1 - z_1'}_2^2 = \norm{z_1}_2^2 - \norm{z_1'}_2^2$ and that $\tr(z_1') = \tr(z_1)$, the entire expression evaluates to $1$.  For the upper bound $\psi^{\cM^{\cU}}(\mathbf{X}) \leq 1$, simply take $z_1 = 0$.

	\end{proof}
	
	\begin{remark}[Lack of quantifier elimination for $\mathrm{II}_1$ factors] \label{rem: II1 QE}
		Our argument also gives another proof of \cite[Theorem~1]{Farah2023}, that a $\mathrm{II}_1$ factor never admits quantifier elimination, even without the assumption that $M_2(\cM)$ embeds into $\cM^{\cU}$.  Indeed, this assumption was only used to relate the existential types of $\mathbf{X}$ and $\mathbf{Y}$.  It is immediate from Lemma \ref{lem: type convergence} that the quantifier-free type of $(U_1^{(n)},U_2^{(n)},U_3^{(n)})$ converges almost surely as $n \to \cU$, and the quantifier-free type of $(U_1^{(n)},U_2^{(n)},U_4^{(n)})$ converges to the same limit, hence so does the quantifier-free type of $(U_1^{(n)}\oplus U_1^{(n)},U_2^{(n)} \oplus U_2^{(n)},U_3^{(n)} \oplus U_4^{(n)})$.  Therefore, $\mathbf{X}$ and $\mathbf{Y}$ have the same quantifier-free type almost surely.  In fact, by Voiculescu's asymptotic freeness theory \cite{Voiculescu1991,Voiculescu1998}, $\mathbf{X}$ and $\mathbf{Y}$ are almost surely triples of freely independent unitaries whose spectral measures are uniform over the circle.  However, the argument given above shows that $\mathbf{X}$ and $\mathbf{Y}$ do not have the same type, so that $\cM$ does not admit quantifier elimination.  
	\end{remark}
	
	\begin{remark}[Alternative approaches to the proof]
		Theorem \ref{thm: main MC} in the $\mathrm{II}_1$ factor case can be proved in various ways using other constructions of quantum expanders, similar to how IF used spectral gap property (T) groups to show a lack of quantifier elimination for $\mathrm{II}_1$ factors in \cite[Lemma~2.1]{Farah2023}.  Let $U_1^{(n)}$, \dots, $U_d^{(n)}$ be a sequence of deterministic matrices such that $U_1^{(n)}$, \dots, $U_d^{(n)}$ and their adjoints are a $(2d,\epsilon)$-quantum expander.  Let $U_{d+1}^{(n)}$ and $U_{d+2}^{(n)}$ be independent Haar random unitaries.  Then the above argument for Theorem \ref{thm: main MC} in the factor case could also be done using
		\[
		\mathbf{X}^{(n)} = (U_1^{(n)} \otimes 1_{\cM^{1/n}}, \dots, U_d^{(n)} \otimes 1_{\cM^{1/n}}, U_{d+1}^{(n)} \otimes 1_{\cM^{1/n}}),
		\]
		and
		\[
		\mathbf{Y}^{(n)} = ((U_1^{(n)} \oplus U_1^{(n)}) \otimes 1_{\cM^{1/2n}}, \dots, (U_d^{(n)} \oplus U_d^{(n)}) \otimes 1_{\cM^{1/2n}}, (U_{d+1}^{(n)} \oplus U_{d+2}^{(n)}) \otimes 1_{\cM^{1/2n}}).
		\]
		Indeed, concentration of measure (Proposition \ref{prop: concentration} and the proof of  Lemma \ref{lem: type convergence}) still apply to a mixture of deterministic matrices and Haar random unitaries, and hence Lemma \ref{lem: inf formula limit} still goes through.  The arguments for Lemma \ref{lem: first distance estimate} and \ref{lem: second distance estimate} only use the fact that $U_1^{(n)}$, \dots, $U_d^{(n)}$ is an expander and that $\norm{U_{d+1}^{(n)} - U_{d+2}^{(n)}}_2$ converges to $\sqrt{2}$ as $n \to \infty$.  Further comments on alternative proofs can be found in the first arXiv version of this paper.
	\end{remark}
	
	\section{Model completeness for tracial von Neumann algebras} \label{sec: MC general case}
	
	It is now straightforward to extend Theorem \ref{thm: main MC} from II$_1$ factors to arbitrary tracial von Neumann algebras as outlined in the introduction.
	
	\subsection{Model completeness and direct sums} \label{sec: MC direct sums}
	
	\begin{lem} \label{lem: direct sum} If the theory of a tracial von Neumann algebra $\cM$ is model-complete, then the theory of every direct summand of $\cM$ is model-complete. 
	\end{lem}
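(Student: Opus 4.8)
The plan is to verify the embedding criterion (1) of Lemma~\ref{lem: type MC}. Every direct summand of $\cM$ has the form $p\cM$ for a central projection $p$, so, setting $\cM_1 := p\cM$ and $\cM_2 := (1-p)\cM$ and discarding the trivial cases $p\in\{0,1\}$, it suffices to show that $\Th(\cM_1)$ is model complete, where $\cM_1$ and $\cM_2$ carry the traces $\tfrac1\lambda\tau_\cM$ and $\tfrac1{1-\lambda}\tau_\cM$ restricted to them, with $\lambda:=\tau_\cM(p)\in(0,1)$. So I would fix models $\cA_1,\cB_1$ of $\Th(\cM_1)$ and an embedding $\iota\colon\cA_1\to\cB_1$ and show that $\iota$ is elementary. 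Endowing $\cA_1\oplus\cM_2$ and $\cB_1\oplus\cM_2$ with the weighted traces $\lambda\tau_{\cA_1}\oplus(1-\lambda)\tau_{\cM_2}$ and $\lambda\tau_{\cB_1}\oplus(1-\lambda)\tau_{\cM_2}$, the map $\iota\oplus\id_{\cM_2}$ is an embedding of tracial von Neumann algebras.

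The first claim is that $\cA_1\oplus\cM_2\equiv\cM$ and $\cB_1\oplus\cM_2\equiv\cM$. Since $\cA_1\equiv\cM_1$, the Keisler--Shelah theorem for metric structures provides an ultrafilter $\cV$ with $\cA_1^{\cV}\cong\cM_1^{\cV}$; finite direct sums commute with ultrapowers and preserve the weights (the weighted $2$-norm on $\cN_1\oplus\cN_2$ vanishes along $\cV$ iff each coordinate does, because $\lambda,1-\lambda>0$), so $(\cA_1\oplus\cM_2)^{\cV}\cong\cM_1^{\cV}\oplus\cM_2^{\cV}\cong(\cM_1\oplus\cM_2)^{\cV}=\cM^{\cV}$ as tracial von Neumann algebras, whence $\cA_1\oplus\cM_2\equiv\cM$, and likewise for $\cB_1$. (Alternatively, that $\Th(\cN_1\oplus\cN_2)$ depends only on $\Th(\cN_1)$, $\Th(\cN_2)$ and the weight is a Feferman--Vaught-type fact.) As $\Th(\cM)$ is model complete, $\iota\oplus\id_{\cM_2}$ is therefore an \emph{elementary} embedding.

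It remains to descend elementarity of $\iota\oplus\id_{\cM_2}$ to $\iota$, which I would do by relativizing formulas to the first summand using its central projection. For an $\Lang$-formula $\phi(x_1,\dots,x_n)$, recursively define an $\Lang$-definable predicate $\phi^{\sharp}(x_1,\dots,x_n,p)$ by replacing each atomic subformula $\re\tr(q(\mathbf{x}))$ with $\tfrac1\lambda\re\tr\!\big(p\,q(\mathbf{x})\big)$ and each atomic $d(x_i,x_j)$ with $\tfrac1{\sqrt\lambda}\,d(x_i,x_j)$, and relativizing every quantifier over an operator-norm ball $B_R$ to the set $\{z\in B_R : z=pz\}$, which is definable over $\{p\}$ since $d\big(z,\{w:w=pw\}\big)=\norm{z-pz}_2$ (see \S\ref{sec: definable}). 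For any direct sum $\cN_1\oplus\cN_2$ with weights $\lambda,1-\lambda$ and $P:=1_{\cN_1}\oplus 0$, the corner $P(\cN_1\oplus\cN_2)$ equals $\cN_1\oplus 0$ and carries the trace $\lambda\tau_{\cN_1}$ together with the restriction of the operator norm, so a routine induction yields
\[
(\phi^{\sharp})^{\cN_1\oplus\cN_2}\big(c_1\oplus 0,\dots,c_n\oplus 0,\,P\big)=\phi^{\cN_1}(c_1,\dots,c_n),\qquad c_1,\dots,c_n\in\cN_1 .
\]
Applying this in $\cA_1\oplus\cM_2$ and $\cB_1\oplus\cM_2$, and using that $\iota\oplus\id_{\cM_2}$ carries the tuple $(a_1\oplus 0,\dots,a_n\oplus 0,\,1_{\cA_1}\oplus 0)$ to $(\iota(a_1)\oplus 0,\dots,\iota(a_n)\oplus 0,\,1_{\cB_1}\oplus 0)$ and is elementary, one gets $\phi^{\cA_1}(a_1,\dots,a_n)=\phi^{\cB_1}(\iota(a_1),\dots,\iota(a_n))$. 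Since $\phi$ and the tuple were arbitrary, $\iota$ is elementary, so $\Th(\cM_1)$ is model complete by Lemma~\ref{lem: type MC}.

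The one step needing real care is the last one: one must confirm that the recursion yields a legitimate $\Lang$-definable predicate with correctly matched sorts --- in particular that quantification over the definable set $\{z:z=pz\}$ is admissible --- and keep track of the fixed scalar $\tfrac1\lambda$, which is harmless since multiplication by a real constant is among the connectives. The direct-sum and ultrapower manipulations in the middle step are routine, and no deeper obstruction arises.
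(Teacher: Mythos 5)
Your proposal is correct and follows essentially the same route as the paper: promote $\iota$ to an embedding of direct sums, use the Feferman--Vaught-type result of \cite{FG2023} (your Keisler--Shelah detour is an acceptable substitute) to see both direct sums model $\Th(\cM)$, apply model completeness there, and then relativize formulas to the corner cut by the central projection. The only cosmetic difference is in the relativization step: the paper avoids quantifying over a definable set by simply substituting $z y_i$ for each quantified variable $y_i$ inside the polynomials (with $z$ a new free variable later evaluated at $1 \oplus 0$), which sidesteps the definable-set bookkeeping you correctly flag as the delicate point.
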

	
	\begin{proof} 	
		Let $\cM$ be a tracial von Neumann algebra which decomposes as a direct sum $\cM_1 \oplus \cM_2$ with weights $\alpha$ and $1 - \alpha$.  Assume the theory of $\cM$ is model complete; we will prove that  the theory of each one of $\cM_1$ and $\cM_2$ is model complete.
		
		Let $\cN_1 \equiv \cM_1$ and $\cN_2 \equiv \cM_2$, and $\iota_1: \cM_1 \to \cN_1$ and $\iota_2: \cM_2 \to \cN_2$ be trace-preserving $*$-homomorphisms; we need to show that $\iota_1$ and $\iota_2$ are elementary.  Let $\cN$ be the direct sum of $\cN_1$ and $\cN_2$ with weights $\alpha$ and $1 - \alpha$. Note that by \cite{FG2023}, $\cN \equiv \cM$ since the theory of $\cN$ is uniquely determined by the theories of the direct summands.  By model completeness of $\cM$, the map $\iota = \iota_1 \oplus \iota_2: \cM \to \cN$ is elementary.
		
		Let $\phi(x_1,\dots,x_n)$ be an $\Lang_{\tr}$-formula, and we will show that $\phi^{\cN_1}(\iota_1(\mathbf{a})) = \phi^{\cM_1}(\mathbf{a})$ for ${\mathbf{a} = (a_1,\dots,a_n) \in \cM_1^n}$.  Because prenex formulas are dense in the space of all formulas \cite[\S 6]{BYBHU2008}, assume without loss of generality that
		\[
		\phi(x_1,\dots,x_n) = \inf_{y_1} \sup_{y_2} \dots \inf_{y_{2m-1}} \sup_{y_{2m}} F(\re \tr(p_1(\mathbf{x},\mathbf{y})),\dots,\re \tr(p_k(\mathbf{x},\mathbf{y})))
		\]
		where $y_1$, \dots, $y_{2m}$ are variables in the unit ball, $F: \R^k \to \R$ is continuous, and $p_1$, \dots, $p_k$ are non-commutative $*$-polynomials.  Define
		\[
		\psi(x_1,\dots,\tilde{x}_n,z) = \inf_{y_1} \sup_{y_2} \dots \inf_{y_{2m-1}} \sup_{y_{2m}} F\left(\frac{1}{\alpha} \re \tr(p_1(\mathbf{x},z\mathbf{y})),\dots, \frac{1}{\alpha} \re \tr(p_k(\mathbf{x},z\mathbf{y})) \right),
		\]
		where $z\mathbf{y} = (zy_1,\dots,zy_{2m})$.  Observe that
		\[
		\phi^{\cM_1}(a_1,\dots,a_n) = \psi^{\cM}(a_1 \oplus 0,\dots,a_n \oplus 0, 1 \oplus 0),
		\]
		because $(1 \oplus 0)(y \oplus y') = y \oplus 0$.  Similarly,
		\[
		\phi^{\cN_1}(\iota_1(a_1),\dots,\iota_1(a_n)) = \psi^{\cN}(\iota(a_1 \oplus 0),\dots,\iota(a_n \oplus 0), \iota(1 \oplus 0)).
		\]
		The mapping $\iota: \cM \to \cN$ is elementary, and hence
		\[
		\psi^{\cN}(\iota(a_1 \oplus 0),\dots,\iota(a_n \oplus 0), \iota(1 \oplus 0)) = \psi^{\cM}(a_1 \oplus 0,\dots,a_n \oplus 0, 1 \oplus 0).
		\]
		This shows $\phi^{\cN_1}(\iota_1(\mathbf{a})) = \phi^{\cM_1}(\mathbf{a})$, so the mapping $\iota_1$ is elementary as desired. The same argument applies to $\iota_2$.  Therefore, $\cM_1$ and $\cM_2$ are model complete.
	\end{proof}
	
	\begin{remark} \label{rem: QE direct sums}
		Similarly, if  $\cM = (\cM_1,\alpha) \oplus (\cM_2,1-\alpha)$ and if $\Th(\cM)$ admits quantifier elimination, then $\Th(\cM_j)$ admits quantifier elimination for $j = 1$, $2$.  To see this, consider $n$-tuples $\mathbf{x}$ and $\mathbf{y}$ in $\cM_1$ that have the same quantifier-free type in $\cM_1$ (i.e.\ they have the same $*$-moments).  Then $(x_1 \oplus 0, \dots, x_n \oplus 0, 1 \oplus 0)$ and $(y_1 \oplus 0, \dots, y_n \oplus 0,1\oplus 0)$ have the same quantifier-free type in $\cM$.  Therefore, by Lemma \ref{lem: type QE}, they have the same type in $\cM$.  As we saw above, for each formula $\phi$, there exists $\psi$ such that $\phi^{\cM_1}(x_1,\dots,x_n) = \psi^{\cM}(x_1 \oplus 0, \dots, x_n \oplus 0, 1 \oplus 0)$ (and similarly for the $y_j$'s), and hence $\mathbf{x}$ and $\mathbf{y}$ have the same type in $\cM_1$, and so $\Th(\cM_1)$ has quantifier elimination by Lemma \ref{lem: type QE}.
	\end{remark}
	
	\begin{remark}
		The relationship between model theoretic properties and direct sums/integrals is an important topic of recent study; \cite{FG2023} showed how to determine the theory of the direct integral from that of the integrands, and the opposite direction was studied for von Neumann algebras in \cite{GaoJekelIntegral}, both of which are now special cases of the general theory of direct integrals developed by Ben Yaacov, Ibarluc{\'\i}a, and Tsankov \cite{BYIT2024}.  Based on these works, it is plausible that model completeness of a direct \emph{integral} implies model completeness of the integrands almost everywhere in general, but we leave this as a question for future research.
	\end{remark}
	
	\subsection{Conclusion of the proof of {Theorem \ref{thm: main MC}}}
	
	By Lemma~\ref{lem: direct sum}, because we already proved Theorem \ref{thm: main MC} in the case of $\mathrm{II}_1$ factors, we can eliminate any direct summands that are $\mathrm{II}_1$ factors satisfying that $M_2(\cM_\omega)$ embeds into $\cM_\omega^{\cU}$.  It remains to handle the diffuse part of the direct integral decomposition for $\cM$, which actually turns out to be much easier.
	
	\begin{lem} \label{lem: diffuse MC}
		Let $(\cM,\tau) = \int_{[0,1]} (\cM_\omega,\tau_\omega)\,d\omega$, where $\cM_\omega$ is a separable $\mathrm{II}_1$ factor such that $M_2(\cM_\omega)$ embeds into $\cM_\omega^{\cU}$.  Then $\cM$ is not model complete.
	\end{lem}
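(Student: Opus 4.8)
The plan is to reprise the construction of Section~\ref{sec: MC factor proof}, with the matrix compressions of a single $\mathrm{II}_1$ factor replaced by matrix compressions of the direct integral $\cM$. By Lemma~\ref{lem: type MC} it suffices to produce, in $\cM^\cU$ for a suitable free ultrafilter $\cU$, two $3$-tuples $\mathbf X,\mathbf Y$ with $\phi^{\cM^\cU}(\mathbf X)\le\phi^{\cM^\cU}(\mathbf Y)$ for every $\inf$-formula $\phi$ but $\tp^{\cM^\cU}(\mathbf X)\ne\tp^{\cM^\cU}(\mathbf Y)$.

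First I would set up the direct-integral bookkeeping. For each $n$, a measurable field of systems of $n\times n$ matrix units in the fibers yields an identification $\cM\cong M_n(\C)\bar\otimes\cM^{1/n}$ with $\cM^{1/n}=\int_{[0,1]}\cM_\omega^{1/n}\,d\omega$ again a direct integral of separable $\mathrm{II}_1$ factors over $([0,1],\mathrm{Leb})$; and a measurable selection argument, together with the identification $\bigl(\int_{[0,1]}\cM_\omega\bigr)^\cU\cong\int_{[0,1]}\cM_\omega^\cU\,d\omega$, promotes the fiberwise embeddings $M_2(\cM_\omega)\hookrightarrow\cM_\omega^\cU$ to an embedding $M_2(\cM)\hookrightarrow\cM^\cU$. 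Defining $\mathbf X^{(n)},\mathbf Y^{(n)},\mathbf X,\mathbf Y$ exactly as in Section~\ref{sec: MC factor proof} from independent Haar random unitaries $U_1^{(n)},\dots,U_4^{(n)}$ (independent of $\omega$), the proofs of Lemma~\ref{lem: type convergence} and Lemma~\ref{lem: inf formula limit} use only concentration of measure and the embedding $M_2(\cM)\hookrightarrow\cM^\cU$, while the proofs of Lemmas~\ref{lem: first distance estimate} and \ref{lem: second distance estimate} use only Hastings's quantum expander theorem and the tensorization of operator inequalities; none of this sees the internal structure of $\cM^{1/n}$, so all four arguments go through with only notational changes, giving almost surely that $\phi^{\cM^\cU}(\mathbf X)\le\phi^{\cM^\cU}(\mathbf Y)$ for every $\inf$-formula, that $\{\mathbf X\}'\cap\cM^\cU=\cA:=\prod_{n\to\cU}(\C 1_{M_n(\C)}\bar\otimes\cM^{1/n})$, and that $\{\mathbf Y\}'\cap\cM^\cU=\cB:=\prod_{n\to\cU}((\C 1\oplus\C 1)\bar\otimes\cM^{1/2n})$, both definable over their respective parameters.

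The remaining, and crucial, step is to separate $\tp^{\cM^\cU}(\mathbf X)$ from $\tp^{\cM^\cU}(\mathbf Y)$. Since $\cM$ is not a factor, $\cM^\cU$ already carries a large center and neither $\cA$ nor $\cB$ is a factor, so the sentence $\psi$ of Section~\ref{sec: MC factor proof conclusion} no longer distinguishes the two tuples. The feature to exploit is that $\cB$ contains the block projection $p:=(1_{M_n(\C)}\oplus 0)\otimes 1_{\cM^{1/2n}}$, with $\tau(p)=\tfrac12$ and $p\in Z(\cB)$, which is nevertheless \emph{not} central in $\cM^\cU$: conjugating $p$ by the unitary swapping the two matrix blocks moves it by $\norm{\cdot}_2$-distance $1$. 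One therefore modifies $\psi$ to a $\sup$-$\inf$-$\sup$ sentence $\psi_0$ by adjoining a term that rewards the inner witness $z_1$ for \emph{failing} to commute with all of $\cM^\cU$; then $z_1 = 2p-1$ — a self-adjoint unitary of trace $0$ commuting with $\mathbf Y$ and with $\cB$ — still gives $\psi_0^{\cM^\cU}(\mathbf Y)=0$, via Lemma~\ref{lem: second distance estimate} exactly as before. It remains to see that $\psi_0^{\cM^\cU}(\mathbf X)$ is bounded below by a positive constant: the spectral-gap bound of Lemma~\ref{lem: first distance estimate} together with a Dixmier-averaging argument inside $\cA$ force any near-optimal $z_1$ to be $\epsilon$-close to the center of $\cA$, so it would suffice to prove that $Z(\cA)\subseteq Z(\cM^\cU)$ (which follows, for instance, from $\{\mathbf X\}''\vee\cA=\cM^\cU$); concretely, this says that a near-central element of $\C 1_{M_n(\C)}\bar\otimes\cM^{1/n}$ remains near-central in all of $M_n(\C)\bar\otimes\cM^{1/n}$ with a constant independent of $n$. \emph{This uniform estimate, bounding how much an ultraproduct of matrix amplifications can enlarge the center of $\cA$ — a genuine danger precisely when the fibers have property Gamma — is the main obstacle.} Once it is in hand, no $z_1$ can simultaneously be near-central in $\cA$, non-central in $\cM^\cU$, of norm $1$, and of trace $0$; hence $\psi_0$ separates $\mathbf X$ and $\mathbf Y$, and Lemma~\ref{lem: type MC} gives that $\Th(\cM)$ is not model complete.
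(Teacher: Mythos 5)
Your proposal has a genuine gap, and you have in fact flagged it yourself: the entire argument hinges on the sentence $\psi_0$ separating $\tp^{\cM^{\cU}}(\mathbf X)$ from $\tp^{\cM^{\cU}}(\mathbf Y)$, and this reduces to a uniform-in-$n$ spectral gap estimate for the inclusion $Z(\cM^{1/n})\subseteq \cM^{1/n}$ (equivalently, that a $z_1$ nearly commuting with the unit ball of $\cA$ is nearly central in $\cM^{\cU}$, with constants independent of $n$). You offer no proof of this, and it is not a routine verification: a direct integral of $\mathrm{II}_1$ factors need not have spectral gap over its center at all (e.g.\ when the fibers have property Gamma), which is exactly the danger you name. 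Until that estimate is supplied, the two types are not separated and the proof does not close. A secondary issue: the identification $\bigl(\int_{[0,1]}\cM_\omega\,d\omega\bigr)^{\cU}\cong\int_{[0,1]}\cM_\omega^{\cU}\,d\omega$ is false (the left side has center $L^\infty[0,1]^{\cU}$, which is much larger than $L^\infty[0,1]$); the consequence you actually need, namely an embedding $M_2(\cM)\hookrightarrow\cM^{\cU}$, can be salvaged by a measurable selection argument, but not via that isomorphism.

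The paper avoids all of this: in the diffuse case the random matrix machinery is unnecessary. Since the base measure is diffuse, $\cN:=L^\infty[0,1]\otimes\cM$ is again a direct integral over $[0,1]^2$ with the same distribution of fiber theories, so $\cN\equiv\cM$ by the direct-integral elementary equivalence theorem of Farah--Ghasemi; moreover $\cN\oplus\cN\cong\cN$, and the fiberwise hypothesis gives an embedding $M_2(\cN)\hookrightarrow\cN^{\cU}$. The composite
\[
\cN\;\xrightarrow{\;\cong\;}\;\cN\oplus\cN\;\hookrightarrow\;M_2(\cN)\;\hookrightarrow\;\cN^{\cU}
\]
sends the central projection $1\oplus 0$ to a non-central projection, and since the center is a definable set this embedding between two models of $\Th(\cM)$ cannot be elementary. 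You should either adopt this short argument or supply the missing uniform spectral gap estimate; as written, your proof is incomplete precisely at its decisive step.
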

	
	\begin{proof}
		Let $\cN = L^\infty[0,1] \otimes \cM$.  Note that
		\[
		\cN = \int_{[0,1]^2} \cM_\omega \,d\omega \,d\omega'.
		\]
		Thus, the distribution of $\Th(\cM_\omega)$ over $[0,1]^2$ is the same as the distribution of the $\Th(\cM_\omega)$ over $[0,1]$.  Therefore, it follows from \cite[Theorem 2.3]{FG2023} that $\cM \equiv \cN$.  Moreover, $\cN \oplus \cN \cong \cN$.  Now fix an ultrafilter $\cU$ on $\N$ and note that $M_2(\cM_\omega)$ embeds into $\cM_\omega^{\cU}$ for all $\omega$, hence $M_2(\cN)$ embeds into $\cN^{\cU}$.  Consider a trace preserving $*$-homomorphism
		\[
		\cN \to \cN \oplus \cN \to M_2(\cN) \to \cN^{\cU},
		\]
		where the first map is an isomorphism and the second map is the block diagonal embedding.  Then $1 \oplus 0$ is central in $\cN \oplus \cN$ but $1 \oplus 0$ is not central in $M_2(\cN)$.  Hence, our homomorphism does not map $Z(\cN)$ into $Z(\cN^{\cU})$, so it is not elementary.
	\end{proof}
	
	\begin{proof}[Proof of Theorem \ref{thm: main MC}]
		Suppose $\cM$ has a direct integral decomposition where $\cM_\omega$ is a $\mathrm{II}_1$ factor such that $M_2(\cM_\omega)$ embeds $\cM_{\omega}^{\cU}$, for $\omega$ in some positive measure set.  If the positive measure set has an atom, then $\cM$ has a direct summand $\cN$ which is a $\mathrm{II}_1$ factor such that $M_2(\cN)$ embeds into $\cN^{\cU}$.  The results of the previous section show that $\cN$ is not model complete, hence by Lemma \ref{lem: direct sum}, $\cM$ is not model complete.
		
		If there is no atom in our positive measure set, then $\cM$ has a direct summand of the form $\cN = \int_{[0,1]} \cN_\alpha\,d\alpha$ where the integral occurs with respect to Lebesgue measure and $\cN_\alpha$ is a $\mathrm{II}_1$ factor such that $M_2(\cN_\alpha)$ embeds into $\cN_\alpha^{\cU}$.  Hence, by Lemma \ref{lem: diffuse MC}, $\cN$ is not model complete, and so by Lemma \ref{lem: direct sum}, $\cM$ is not model complete.
	\end{proof}
	
	\begin{remark} \label{rem: alternate QE proof}
		A similar argument recovers the result of the first author that the theory of any separable tracial von Neumann algebra with a type $\mathrm{II}_1$ summand never admits quantifier elimination \cite{Farah2023}.  An algebra satisfying the assumptions of Theorem \ref{thm: main MC} either has a $\mathrm{II}_1$ factor as a direct summand, or it has a type $\mathrm{II}_1$ direct summand with diffuse center.  If there is a type $\mathrm{II}_1$ direct summand $\cN$, then $\Th(\cN)$ does not have quantifier elimination by Remark \ref{rem: II1 QE} and hence by Remark \ref{rem: QE direct sums}, $\Th(\cM)$ does not have quantifier elimination.  On the other hand, suppose $\cN$ is a type $\mathrm{II}_1$ direct summand of $\cM$ with diffuse center.  In this case, we argue similarly to Lemma \ref{lem: eliminate diffuse matrix term}; $\cN$ has a central projection of trace $1/2$, and also a non-central projection of trace $1/2$, 
		%(the direct integral of projections of trace $1/2$ in each of the $\mathrm{II}_1$ factors), 
		and hence $\Th(\cN)$ does not have quantifier elimination.  So by Remark \ref{rem: QE direct sums}, $\Th(\cM)$ does not have quantifier elimination.
	\end{remark}
	
	\section{Further remarks} \label{sec: further remarks}
	
	\subsection{Topological properties} \label{sec: topological}
	
	In this section, we study the topological properties of the set of theories that admit quantifier elimination (and those that are model complete), and in particular we will see that quantifier elimination is generic among purely atomic tracial von Neumann algebras (though a lack of quantifier elimination is generic for tracial von Neumann algebras in general).
	
	There is a natural topology on the space of complete theories, where basic open sets have the form
	\[
	\{ \mathrm{T} \models |\phi_1 - c_1| < \epsilon_1, \dots, |\phi_k - c_k| < \epsilon_k \}
	\]
	for some finite list of formulas $\phi_1$, \dots, $\phi_k$, real numbers $c_1$, \dots, $c_k$, and positive $\epsilon_1$, \dots, $\epsilon_k$.  In fact, this topology can be understood in functional analytic terms as follows.  The sentences of a fixed language $\Lang$ form a real algebra that has a natural norm (see the last sentence of \cite[Definition D.2.4]{Fa:STCstar}). A complete theory in language $\Lang$ is naturally identified with a bounded homomorphism from this algebra into $\R$ (\cite[Definition~D.2.8]{Fa:STCstar}), and the topology on the space of complete theories then agrees with the weak-$*$ topology.  The space of theories is metrizable whenever the language $\Lang$ is separable (which is the case for tracial von Neumann algebras).  Moreover, if $\mathcal{C}$ is a class of $\Lang$-structures that is closed under elementary equivalence, then $\mathcal{C}$ is axiomatizable if and only if $\Th_{\mathcal{C}}=\{\Th(\cM): \cM \in \mathcal{C}\}$ is a closed set and every model of some theory in  $\Th_{\mathcal{C}}$ belongs to $\mathcal{C}$. 
	
	A very basic observation is that quantifier elimination and model completeness define sets that are neither open nor closed in the space of theories of tracial von Neumann algebras.
	
	\begin{prop}\label{P.dense}
		The following sets of theories of tracial von Neumann algebras are not closed (equivalently, the corresponding classes are not axiomatizable):
		\begin{enumerate}[(1)]
			\item Those which admit quantifier elimination.
			\item Those which do not admit quantifier elimination.
			\item Those which are model complete.
			\item Those which are not model complete.
		\end{enumerate}
	\end{prop}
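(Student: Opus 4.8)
The plan is to prove the four non-closedness statements uniformly. Since the space of complete theories is compact and metrizable (the language being separable), and each of the four sets is closed under elementary equivalence, it suffices, for each item, to exhibit a sequence of tracial von Neumann algebras $\cM_k$ ($k\ge 2$) whose theories all lie in the set, together with a nonprincipal ultrafilter $\cU$ on $\N$ such that $\prod_{k\to\cU}\cM_k$ is elementarily equivalent to a fixed algebra whose theory is \emph{not} in the set; then $\lim_{k\to\cU}\Th(\cM_k)=\Th(\prod_{k\to\cU}\cM_k)$ is an accumulation point of the set lying outside it. All the sequences I would use have the shape $\cM_k=(\cA,1-s_k)\oplus(\cB,s_k)$ with $\cA,\cB$ fixed and $s_k\to s$, and the ultraproduct is read off from two elementary facts: if $s_k\to 0$, then the central projection onto the $\cB$-summand acquires trace $0$ in $\prod_{k\to\cU}\cM_k$ and hence vanishes, so $\prod_{k\to\cU}\cM_k\cong\cA^{\cU}\equiv\cA$ (the weight of $\cA$ tending to $1$ does not affect the ultrapower); and if $s_k\to\tfrac12$, then $\prod_{k\to\cU}\cM_k\cong\cA^{\cU}\oplus\cB^{\cU}$ with weights $(\tfrac12,\tfrac12)$.

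For (1) and (2) I would take $\cA=L^\infty[0,1]$ and $\cB=\C$. For (1), let $\cM_k=(L^\infty[0,1],\tfrac12-\tfrac1k)\oplus(\C,\tfrac12+\tfrac1k)$; each $\Th(\cM_k)$ admits quantifier elimination by Proposition \ref{prop: QE linear inequality}, since the single one-dimensional summand has weight $\tfrac12+\tfrac1k$ strictly exceeding $\alpha_0=\tfrac12-\tfrac1k$. The limit $(L^\infty[0,1],\tfrac12)\oplus(\C,\tfrac12)$ fails condition (2) of Theorem \ref{thm: main QE}: the two central summand projections both have trace $\tfrac12$ but are not conjugate by an automorphism, since an automorphism cannot interchange a diffuse summand with an atom. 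For (2), let $\cM_k=(L^\infty[0,1],1-\tfrac1k)\oplus(\C,\tfrac1k)$; here the nonzero projection in the atomic part has trace $\tfrac1k\le 1-\tfrac1k=\alpha_0$ yet differs from $0$, so $\Th(\cM_k)$ does not admit quantifier elimination by Theorem \ref{thm: main QE} (equivalently, Proposition \ref{prop: QE linear inequality} fails), while the limit $L^\infty[0,1]$ does admit it.

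For (3) and (4) I would argue with model completeness. For (3), let $\cM_k=M_k(\C)$, each of which is type I and hence model complete by \cite{Farah2023}. The ultraproduct $\cN:=\prod_{k\to\cU}M_k(\C)$ is a $\mathrm{II}_1$ factor: it is infinite-dimensional with a faithful trace, and its center is trivial because $\C\subseteq M_k(\C)$ has spectral gap with a constant independent of $k$, as provided by Corollary \ref{cor: Hastings spectral gap}. Moreover $\cN$ is Connes-embeddable (each $M_k(\C)$ embeds trace-preservingly into $\cR$), so, exactly as in the remarks following Theorem \ref{thm: main MC} and using that every $\mathrm{II}_1$ factor contains a copy of $\cR$, the amplification $M_2(\cN)$ embeds into $\cN^{\cV}$ for some ultrafilter $\cV$; hence $\Th(\cN)$ is not model complete by Theorem \ref{thm: main MC}. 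For (4), let $\cM_k=(L^\infty[0,1],1-\tfrac1k)\oplus(\cR,\tfrac1k)$. Since $M_2(\cR)\cong\cR$ embeds into $\cR^{\cU}$, Theorem \ref{thm: main MC} shows $\Th(\cR)$ is not model complete, so $\Th(\cM_k)$ is not model complete by Lemma \ref{lem: direct sum}; and the limit $L^\infty[0,1]$ is type I, hence model complete.

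The steps requiring the most care are the ultraproduct identifications of $\lim_{k\to\cU}\Th(\cM_k)$ — in particular checking that a summand of vanishing weight genuinely drops out, and, in (3), that the matrix ultraproduct really is a $\mathrm{II}_1$ factor to which the hypothesis of Theorem \ref{thm: main MC} applies — and, on the other side, confirming that the chosen sequences remain inside the relevant set. Given Theorems \ref{thm: main QE} and \ref{thm: main MC}, Proposition \ref{prop: QE linear inequality}, and Lemma \ref{lem: direct sum}, all of this is routine bookkeeping; no new ideas beyond the explicit criteria established earlier seem to be needed. (The equivalence with non-axiomatizability recorded in the statement then follows from the criterion for axiomatizability recalled just before the proposition, since each of the four classes is closed under elementary equivalence and contains every model of any of its theories.)
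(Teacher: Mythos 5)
Your proposal is correct and follows essentially the same strategy as the paper: exhibit sequences of algebras whose theories converge (via ultraproducts, or equivalently via the continuity of $\Th((\cM_0,1-\alpha)\oplus(\cM_1,\alpha))$ in $\alpha$, which is the form the paper uses) to a theory on the wrong side of each set, invoking Theorems \ref{thm: main QE} and \ref{thm: main MC} exactly as the paper does. The only differences are cosmetic choices of witnesses — the paper uses $M_n(\C)\to\prod_{n\to\cU}M_n(\C)$ for (1) and (3) and $(M_n(\C),1-\alpha)\oplus(\cR,\alpha)$ for (2) and (4), whereas you substitute $L^\infty[0,1]$-based examples in (1), (2), and (4) — and these are all verified correctly.
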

	
	\begin{proof}
		We use the following observation several times:  For any two tracial von Neumann algebras $\cM_0$ and $\cM_1$, the theory of $\cM_\alpha = (\cM_0,1-\alpha) \oplus (\cM_1,\alpha)$ depends continuously on $\alpha \in [0,1]$.  This idea was used in \cite[Proposition 5.1]{GH2016}.  Indeed, one can show by induction that for each formula $\phi$, the quantity $\phi^{\cM_\alpha}(x_1 \oplus x_1', \dots, x_n \oplus x_n')$ is continuous in $\alpha$ uniformly over $x_j$ and $x_j'$ in the unit ball.
		
		Now we proceed to the main claims:
		\begin{enumerate}[(1)]
			\item $M_n(\C)$ admits quantifier elimination.  Fixing an ultrafilter $\cU$ on the natural numbers, $\lim_{n \to \cU} \Th(M_n(\C)) = \Th(\prod_{n \to \cU} M_n(\C))$, which does not admit quantifier elimination by \cite{Farah2023} since the matrix ultraproduct is a $\mathrm{II}_1$ factor.\footnote{This also follows from \cite[\S 3]{GHS2013} since the matrix ultraproduct is Connes embeddable and not elementarily equivalent to $\cR$, because it does not have property Gamma.}
			\item Consider $(M_n(\C),1-\alpha) \oplus (\cR,\alpha)$.  This does not admit quantifier elimination when $\alpha > 0$ but does admit quantifier elimination when $\alpha = 0$.
			\item This follows from the same argument as (1).
			\item This follows from the same argument as (2) since $(M_n(\C),1-\alpha) \oplus (\cR,\alpha)$ is not model complete by Theorem \ref{thm: main MC}.  \qedhere
		\end{enumerate}
	\end{proof}
	
	While the sets of theories defined by quantifier elimination and model completeness are not open or closed, they are $G_\delta$-sets. In fact, this holds for separable metric languages in general.  We remark that the analogous statement also holds for countable languages in discrete model theory (and the analog of Proposition~\ref{P.dense} is true for some languages).  Hence, the descriptive complexity of these sets does not increase when we pass from discrete structures to metric structures (in stark contrast, there is a bizarre increase in complexity for sets of omissible types \cite{farah2018omitting}).
	
	\begin{prop}\label{P.Gdelta}
		Let $\Lang$ be a separable language of metric structures. Both  the set of complete theories that admit quantifier elimination and the set of complete theories that are model complete are $G_\delta$ sets.
	\end{prop}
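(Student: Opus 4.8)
The plan is to realize each of the two sets as a countable intersection of open subsets of the (metrizable) space of complete $\Lang$-theories, using the uniform-approximation characterizations of quantifier elimination and of model completeness together with the separability of $\Lang$.

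The crucial point is that for any two $\Lang$-formulas $\phi(\bar x)$ and $\psi(\bar x)$ with the same free variables, the expression $\theta_{\phi,\psi} := \sup_{\bar x} |\phi(\bar x) - \psi(\bar x)|$ is an $\Lang$-\emph{sentence}. Hence $T \mapsto \theta_{\phi,\psi}(T)$ is continuous on the space of complete theories (whose topology is that of pointwise evaluation on sentences, equivalently the weak-$*$ topology), so $\{T : \theta_{\phi,\psi}(T) < \epsilon\}$ is open for every $\epsilon > 0$. Moreover, since a complete theory $T$ pins down the value of $\theta_{\phi,\psi}$ in all of its models, the inequality $\theta_{\phi,\psi}(T) < \epsilon$ is exactly the assertion that $|\phi - \psi| < \epsilon$ holds uniformly over all models of $T$ and uniformly in $\bar x$. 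Thus, by the definition of quantifier elimination, $T$ admits quantifier elimination iff for every $\Lang$-formula $\phi$ and every $\epsilon > 0$ there is a quantifier-free $\psi$ with $\theta_{\phi,\psi}(T) < \epsilon$; and by Lemma~\ref{lem: type MC}(3), $T$ is model complete iff the same holds with $\psi$ allowed to be an $\inf$-formula.

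Next I would use separability of $\Lang$ to fix countable families $\mathcal{D}$, $\mathcal{Q}$, $\mathcal{E}$ of formulas (ranging over all finite arities), dense respectively --- in the norm of uniform approximation over \emph{all} $\Lang$-structures --- among all formulas, among quantifier-free formulas, and among $\inf$-formulas. A routine triangle-inequality argument (approximate $\phi$ by an element of $\mathcal{D}$, approximate a witnessing quantifier-free, resp.\ $\inf$-, formula by an element of $\mathcal{Q}$, resp.\ $\mathcal{E}$, and absorb both errors into the ``for all $\epsilon$'' clause) then gives
\[
\{T : T \text{ admits quantifier elimination}\} \;=\; \bigcap_{\phi \in \mathcal{D}}\;\bigcap_{m \geq 1}\;\bigcup_{\psi \in \mathcal{Q}} \bigl\{ T : \theta_{\phi,\psi}(T) < 1/m \bigr\},
\]
together with the analogous identity for model completeness, where $\mathcal{Q}$ is replaced by $\mathcal{E}$. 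The right-hand side is a countable intersection of open sets, so both sets are $G_\delta$.

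The argument is mostly bookkeeping. The only step requiring a bit of attention is the reduction to the countable dense families: one must check that ``$|\phi - \psi| < \epsilon$ uniformly over all models of $T$'' may be replaced, at no cost, by the same statement with $\phi \in \mathcal{D}$ and $\psi \in \mathcal{Q}$ (resp.\ $\mathcal{E}$). This is fine because $\mathcal{D}$, $\mathcal{Q}$, $\mathcal{E}$ were chosen dense for the stronger norm of uniform approximation over all structures (which dominates uniform approximation over models of $T$), and because quantifier-freeness, resp.\ being an $\inf$-formula, is preserved when a formula is replaced by a nearby member of $\mathcal{Q}$, resp.\ $\mathcal{E}$.
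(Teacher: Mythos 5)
Your proposal is correct and follows essentially the same route as the paper: both reduce to a countable dense family of formulas via separability of $\Lang$, observe that $\sup_{\bar x}|\phi(\bar x)-\psi(\bar x)|<1/m$ is an open condition on complete theories, and express each set as a countable intersection of the resulting open sets (the paper simply takes the union over \emph{all} quantifier-free, resp.\ $\inf$-, formulas $\psi$ rather than over a countable dense subfamily, which spares the extra density check but changes nothing essential).
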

	
	\begin{proof}
		Consider quantifier elimination.  Since the language is separable, choose for each $n$ a countable dense set $\mathcal{F}_n$ of formulas in $n$ variables (if there are multiple sorts, then we choose such a set for each tuple of sorts).  For each $n$ and $\phi \in \mathcal{F}_n$, for each $k \geq 1$, let $G_{\phi,k}$ be the set of complete theories $\mathrm{T}$ such that there exists a quantifier-free formula $\psi$ such that $\mathrm{T}$ models
		\[
		\sup_{x_1,\dots,x_n} |\phi(x_1,\dots,x_n) - \psi(x_1,\dots,x_n)| < \frac{1}{k}.
		\]
		Then $G_{\phi,k}$ is open and $\bigcap_{\phi, k} G_{\phi,k}$ is precisely the set of theories that admit quantifier elimination, since it suffices to approximate a \emph{dense} subset of formulas by quantifier-free formulas.  The argument for model completeness works the same way using Lemma \ref{lem: type MC} (3).
		% Next, consider model completeness.  Our characterization of model completeness in Lemma \ref{lem: type MC} was that if $\phi^{\cM}(\mathbf{x}) \leq \phi^{\cM}(\mathbf{y})$ for all inf-formulas, then $\mathbf{x}$ and $\mathbf{y}$ have the same type.  By a straightforward compactness argument, we see that this is equivalent to the following:  For every $n$-variable formula $\phi$ and $\epsilon > 0$, there exists an inf-formula $\psi$ and $\delta > 0$ such that for all $\cM \models \mathrm{T}$ and all $n$-tuples $\mathbf{x}$ and $\mathbf{y}$, we have
		% \[
		% \psi^{\cM}(\mathbf{x}) \leq \psi^{\cM}(\mathbf{y}) + \delta \implies |\phi^{\cM}(\mathbf{x}) - \phi^{\cM}(\mathbf{y})| < \epsilon.
		% \]
		% In other words, for every $n$-variable formula $\phi$ and $\epsilon > 0$, there exists an inf-formula $\psi$ such that
		% \[
		% \sup_{x_1,\dots,x_n,y_1,\dots,y_n} \min \left( \psi(y_1,\dots,y_n) + \delta - \psi(x_1,\dots,x_n), |\phi(x_1,\dots,x_n) - \phi(y_1,\dots,y_n)| - \epsilon \right) < 0.
		% \]
		% (Here the inequality is strict because the supremum is achieved in some model by compactness; alternatively, one can get strict inequality by shrinking $\epsilon$.)  Let $G_{\phi,k}$ be the set of theories such that there exists some $\delta > 0$ and some inf-formula $\psi$ making the above statement true for $\epsilon = 1/k$.  Then $G_{\phi,k}$ is open.  Moreover, $\bigcap_{\phi \in \mathcal{F}_n, k \geq 1} G_{\phi,k}$ is the set of model complete theories.
	\end{proof}
	
	So the set of theories of tracial von Neumann algebras with quantifier elimination is non-closed, non-open, and $G_\delta$.  We now show it is meager, since in fact the set of theories of type I von Neumann algebras is meager.  Our proof goes by way of spectral gap.
	
	\begin{lem} \label{lem: spectral gap closed}
		Let $d \in \N$ and $C > 0$.  The complete theories of tracial von Neumann algebras with $(C,d)$-spectral gap form a closed set with dense complement.
	\end{lem}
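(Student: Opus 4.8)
The plan is to realize $(C,d)$-spectral gap as the zero set of a single continuous function on the space of theories of tracial von Neumann algebras, and to obtain the dense complement by an explicit deformation. For the closedness half, I would work with the sentence (really a definable predicate with no free variables)
\[
\sigma_{C,d} \;=\; \inf_{x_1,\dots,x_d \in B_1^{\cM}}\ \sup_{y \in B_1^{\cM}}\left(d(y,Z(\cM))^2 \;\dot{-}\; C\sum_{j=1}^d \norm{[x_j,y]}_2^2\right),
\]
where $B_1^{\cM}$ denotes the operator-norm unit ball and $d(\cdot,Z(\cM))$ the $\norm{\cdot}_2$-distance to the center. This is legitimate because $Z(\cM)$ is a definable set uniformly across all tracial von Neumann algebras (\cite[Lemma 4.2]{FHS2013}, together with the characterization of definable sets recalled in \S\ref{sec: definable}), so $d(\cdot,Z(\cM))^2$ is a definable predicate and hence $\mathrm{T}\mapsto\sigma_{C,d}^{\mathrm{T}}$ is a well-defined continuous $[0,\infty)$-valued function on the (closed) space of theories of tracial von Neumann algebras, in the sense of \S\ref{sec: topological}.

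The key step is to show that a theory $\mathrm{T}$ of a tracial von Neumann algebra satisfies $\sigma_{C,d}^{\mathrm{T}}=0$ if and only if $\mathrm{T}$ has a model with $(C,d)$-spectral gap. One direction is immediate: if $x_1,\dots,x_d$ witness $(C,d)$-spectral gap in $\cM$, then since both sides of \eqref{eqn: spectral gap def} are homogeneous of degree two in $y$, the inequality holds for all $y\in B_1^{\cM}$, so every term in the supremum vanishes and $\sigma_{C,d}^{\cM}=0$. Conversely, suppose $\sigma_{C,d}^{\mathrm{T}}=0$ and pass to a countably saturated model $\cM^\star\models\mathrm{T}$ (for instance an ultrapower). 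Then $\sigma_{C,d}^{\cM^\star}=0$, so the condition on a tuple $(x_1,\dots,x_d)\in(B_1^{\cM^\star})^d$ stating that $\sup_{y\in B_1^{\cM^\star}}\!\big(d(y,Z(\cM^\star))^2\dot{-}C\sum_j\norm{[x_j,y]}_2^2\big)=0$ is approximately satisfiable, hence realized by countable saturation; the realizing tuple witnesses $(C,d)$-spectral gap for $\cM^\star$ after rescaling $y$ to the whole algebra. Thus the set of theories of tracial von Neumann algebras admitting a model with $(C,d)$-spectral gap is exactly $\{\mathrm{T}\colon\sigma_{C,d}^{\mathrm{T}}=0\}$, the zero set of a continuous function, which is closed.

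For the dense complement I would reuse the deformation from the proof of Proposition \ref{P.dense}: given a tracial von Neumann algebra $\cM$, set $\cM_\alpha=(\cM,1-\alpha)\oplus(\cR,\alpha)$ for $\alpha\in(0,1]$, with $\cR$ the hyperfinite $\mathrm{II}_1$ factor. Writing elements of a direct sum $\cA\oplus\cB$ in block form and using $Z(\cA\oplus\cB)=Z(\cA)\oplus Z(\cB)$, a direct computation shows that $\cA\oplus\cB$ has $(C,d)$-spectral gap if and only if both $\cA$ and $\cB$ do (test \eqref{eqn: spectral gap def} against elements supported on a single summand). Since $\cR$ has property Gamma, for any $x_1,\dots,x_d\in B_1^{\cR}$ there is a trace-zero unitary $u\in\cR$ with $\sum_j\norm{[x_j,u]}_2^2$ arbitrarily small while $d(u,Z(\cR))^2=\norm{u-\tau(u)}_2^2=1$, so $\cR$, and therefore $\cM_\alpha$, has no $(C,d)$-spectral gap for any $\alpha>0$. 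On the other hand $\Th(\cM_\alpha)\to\Th(\cM)$ as $\alpha\to0^+$ by the continuity in $\alpha$ noted in the proof of Proposition \ref{P.dense}. Hence every theory of a tracial von Neumann algebra is a limit of theories with no $(C,d)$-spectral gap, giving density of the complement.

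The main obstacle is the equivalence ``$\sigma_{C,d}^{\mathrm{T}}=0\iff\mathrm{T}$ has a model with $(C,d)$-spectral gap'': the bare equality $\sigma_{C,d}^{\cM}=0$ is a priori weaker than $(C,d)$-spectral gap for $\cM$ itself, since the infimum over $x_1,\dots,x_d$ need not be attained and the truncation $\dot{-}$ conceals an additive error that does not scale with $\sum_j\norm{[x_j,y]}_2^2$; the passage to a countably saturated model is precisely what upgrades the approximate witnesses to genuine ones. A secondary point requiring care is the uniform definability of $d(\cdot,Z(\cM))$, which is what makes $\sigma_{C,d}$ a bona fide continuous function on the space of theories.
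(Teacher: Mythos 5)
Your proof is correct and follows essentially the same route as the paper: closedness via the sentence $\inf_{\mathbf{x}}\sup_y\bigl(d(y,Z(\cM))^2\,\dot{-}\,C\sum_j\norm{[x_j,y]}_2^2\bigr)=0$ (using definability of the center and countable saturation to upgrade approximate witnesses to genuine ones), and density of the complement via the deformation $(\cM,1-\alpha)\oplus(\cR,\alpha)$. The only difference is that you spell out why the $\cR$-summand destroys spectral gap (property Gamma) and why the equivalence between the sentence and spectral gap requires passing to a saturated model, both of which the paper leaves implicit.
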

	
	\begin{proof}
		By \cite[Lemma 4.2]{FHS2013}, the center $Z(\cM)$ is definable relative to the theory of tracial von Neumann algebras.  Hence, similar to \cite[Definition 3.2.3; Lemma 3.2.5]{FHLRTVW2021} in the $\mathrm{C}^*$-algebra case, $d(y,Z(\cM))^2$ is a definable predicate (or it is a formula in an expanded language with a sort added for $Z(\cM)$).  Thus, consider the sentence
		\[
		\inf_{x_1,\dots,x_d \in B_1^{\cM}} \sup_{y \in B_1^{\cM}} \left( d(y,Z(\cM))^2 \dot{-} C \sum_{j=1}^d \norm{[x_j,y]}_2^2 \right) = 0.
		\]
		Note that $\cM$ has $(C,d)$-spectral gap, then $\cM$ satisfies this sentence. The converse holds when $\cM$ is countably saturated because we can choose some $x_1$, \dots, $x_d$ that realize the infimum.  Since every complete theory had a countably saturated model, the set of theories of von Neumann algebras with $(C,d)$-spectral gap is equal to the set of theories satisfying this sentence, hence is closed.  To see that its complement is dense, note that for every tracial von Neumann algebra $\cM$, the direct sum $(\cM,1-\alpha) \oplus (\cR,\alpha)$ does not have spectral gap, and $\Th((\cM,1-\alpha) \oplus (\cR,\alpha)) \to \cM$ as $\alpha \to 0$.
	\end{proof}
	
	\begin{prop}
		The following properties define meager sets in the space of complete theories of tracial von Neumann algebras.
		\begin{enumerate}[(1)]
			\item Tracial von Neumann algebras with spectral gap.
			\item Type $\mathrm{I}$ tracial von Neumann algebras.
			\item Tracial von Neumann algebras whose theory admits quantifier elimination.
		\end{enumerate}
	\end{prop}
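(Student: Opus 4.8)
The plan is to derive all three statements from Lemma~\ref{lem: spectral gap closed} by showing that the algebras in each of the three classes have spectral gap, the only real work being the verification for type $\mathrm{I}$ algebras via Hastings's quantum expander theorem. For positive integers $C$ and $d$, let $S_{C,d}$ be the set of complete theories of tracial von Neumann algebras that model the sentence
\[
\inf_{x_1,\dots,x_d \in B_1^{\cM}} \sup_{y \in B_1^{\cM}} \left( d(y,Z(\cM))^2 \dot{-} C \sum_{j=1}^d \norm{[x_j,y]}_2^2 \right) = 0
\]
from Lemma~\ref{lem: spectral gap closed}; that lemma tells us each $S_{C,d}$ is closed with dense complement, hence nowhere dense. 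Since the space of complete theories of tracial von Neumann algebras is compact and metrizable, hence a Baire space, the countable union $\Gamma := \bigcup_{C,d \in \N} S_{C,d}$ is meager. It therefore suffices to show that, in each of the three classes, every algebra has its theory in $\Gamma$.

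For item (1): if $\cM$ has spectral gap then it has $(C_0,d)$-spectral gap for some real $C_0 > 0$ and some $d \in \N$, and rounding $C_0$ up to an integer $C$ still yields $(C,d)$-spectral gap since the inequality \eqref{eqn: spectral gap def} only weakens; evaluating the displayed sentence at a witnessing tuple then shows $\Th(\cM) \in S_{C,d} \subseteq \Gamma$. (The reverse inclusion also holds by passing to a countably saturated model exactly as in the proof of Lemma~\ref{lem: spectral gap closed}, so the set of theories of algebras with spectral gap is in fact exactly $\Gamma$.) For item (3): by Theorem~\ref{thm: main QE}, any tracial von Neumann algebra whose theory admits quantifier elimination is type $\mathrm{I}$, so once (2) is known, (3) follows because a subset of a meager set is meager.

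The heart of the matter, and the step I expect to be the main obstacle, is item (2): I claim that every type $\mathrm{I}$ tracial von Neumann algebra has $(C,3)$-spectral gap for one constant $C$ that does not depend on the algebra, so that its theory lies in $S_{C,3} \subseteq \Gamma$. I would write such an algebra as $\bigoplus_k \left( M_k(\C) \otimes A_k \right)$ with each $A_k$ abelian; on the abelian summands the relative commutant of the whole algebra is the summand itself, so no condition is imposed there. For the matrix factors, Corollary~\ref{cor: Hastings spectral gap} with $d = 3$ produces, for all sufficiently large $k$, unitaries $U_{1,k}, U_{2,k}, U_{3,k} \in M_k(\C)$ with $\norm{A - \tr_k(A)}_2^2 \le \frac{3}{4} \sum_{j=1}^3 \norm{[A,U_{j,k}]}_2^2$, while the finitely many remaining $k$ can be handled individually: each $M_k(\C)$ is a finite-dimensional factor and admits three unitaries whose joint commutant is $\C 1$, which by finite-dimensionality gives $(C_k,3)$-spectral gap for some finite $C_k$, and one lets $C$ be the maximum of $3/4$ and these finitely many $C_k$'s. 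Tensoring each matrix inequality with $\id_{A_k}$ exactly as in the proof of Lemma~\ref{lem: first distance estimate} and then assembling the summands with their trace weights shows that the elements $x_j := \bigoplus_k \left( U_{j,k} \otimes 1_{A_k} \right) \in B_1^{\cM}$ witness $(C,3)$-spectral gap for $\cM$. The delicate point is precisely that $C$ must not depend on $\cM$; this works because Hastings's bound holds with a fixed constant past a fixed threshold, and both that threshold and the constants $C_k$ for the $k$ below it are absolute. Hence $\Th(\cM) \in S_{C,3} \subseteq \Gamma$ for every type $\mathrm{I}$ tracial von Neumann algebra, which establishes (2) and completes the proof.
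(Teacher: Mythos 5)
Your proposal is correct and follows essentially the same route as the paper: both deduce meagerness from Lemma~\ref{lem: spectral gap closed} by taking a countable union over $(C,d)$ and then show every type $\mathrm{I}$ algebra has spectral gap with uniform constants via Hastings/Corollary~\ref{cor: Hastings spectral gap}, with quantifier elimination reduced to the type $\mathrm{I}$ case. The only differences are cosmetic --- the paper uses $(C,d)=(2,2)$ and phrases the assembly step as closure of $(2,2)$-spectral gap under direct integrals, whereas you use $d=3$ and explicitly patch in the finitely many matrix sizes below Hastings's threshold, which is a slightly more careful treatment of the same point.
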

	
	\begin{proof}
		(1) By Lemma \ref{lem: spectral gap closed} the $(C,d)$-spectral gap property defines a closed set whose complement is dense.  Taking the union over $C$ and $d$ in $\N$ yields a meager $F_\sigma$ set.
		
		(2) Hastings's result (see Theorem \ref{thm: Hastings} and Corollary \ref{cor: Hastings spectral gap} above) shows that matrix algebras $M_n(\C)$ have spectral gap for a fixed $C$ and $d$ (for instance one can take $d = 2$ and $C = 2$).  It is straightforward to check that a direct integral of tracial von Neumann algebras with $(2,2)$-spectral gap also has $(2,2)$-spectral gap.  Hence, all separable type $\mathrm{I}$ tracial von Neumann algebras have $(2,2)$-spectral gap, so their theories are contained in the meager set from (1).
		
		(3) Quantifier elimination can only hold for the theories of type I tracial von Neumann algebras \cite[Theorem 1]{Farah2023}.
	\end{proof}
	
	As the set of theories of von Neumann algebras with quantifier elimination is meager in the space of all theories, we now consider its topological properties \emph{within} the space of theories of type I von Neumann algebras.  In light of Theorem \ref{thm: main QE}, tracial von Neumann algebras $\cM$ whose theories admit quantifier elimination come in two varieties, those with an $L^\infty[0,1]$ summand and those without.  First, those $\cM$ with an $L^\infty[0,1]$ summand can only have finitely many matrix algebra summands, since projections in the atomic part cannot have trace smaller than the weight $\alpha_0$ of the $L^\infty[0,1]$ summand by Proposition~\ref{prop: QE obstructions} (3).  Fix natural numbers $k$ and $n_1$, \dots, $n_k$, and consider
	\[
	\cM = (L^\infty[0,1],\alpha_0) \oplus \bigoplus_{j=1}^k (M_{n_j}(\C),\alpha_j).
	\]
	From \ref{prop: QE linear inequality}, we can see that the set of weights $(\alpha_0,\dots,\alpha_k)$ such that $\cM$ admits quantifier elimination is an open subset of the $k$-simplex, as we can see from Proposition \ref{prop: QE linear inequality}.  However, it is not dense since everything in the closure must satisfy $\alpha_j/n_j \geq \alpha_0$ for $j \geq 1$.
	
	Second, we have purely atomic $\cM$.  As noted in \cite[\S 3]{FG2023}, purely atomic algebras can be parameterized by $\rho_{\cM}(m,n)$ for $m, n \geq 1$, where for each $m \in \N$, the values $\rho_{\cM}(m,1) \geq \rho_{\cM}(m,2) \geq \dots$ are the weights of the central projections associated to $M_m(\C)$ terms in the direct sum decomposition.  If there are only finitely many $M_m(\C)$ terms, we set $\rho_{\cM}(m,n) = 0$ for $n$ larger than the number of such terms.  Let
	\[
	\Delta = \left\{ (\alpha_{m,n})_{m,n\geq 1}: \alpha_{m,n} \geq \alpha_{m,n+1} \geq 0, \sum_{m,n \geq 1} \alpha_{m,n} = 1 \right\}.
	\]
	We view $\Delta$ as a metric space with respect to the $L^1$ metric.  The resulting topology on $\Delta$ agrees with the topology of pointwise convergence (however, $\Delta$ is not compact because elements of $\Delta$ can converge pointwise to zero).
	
	\begin{lem}
		For $\vec{\alpha} = (\alpha_{m,n})_{m,n \geq 1}$, let
		\[
		\cM_{\vec{\alpha}} = \bigoplus_{m,n \geq 1} (M_m(\C), \alpha_{m,n})
		\]
		be the associated purely atomic tracial von Neumann algebra.  The map $\vec{\alpha} \mapsto \Th(\cM_{\vec{\alpha}})$ is a homeomorphism onto its image.
	\end{lem}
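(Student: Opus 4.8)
The plan is to check that the map $\Phi\colon\vec\alpha\mapsto\Th(\cM_{\vec\alpha})$ is injective, continuous, and open onto its image. Injectivity and continuity of $\Phi^{-1}$ will both come from the structure theory of purely atomic algebras. For $\vec\alpha\in\Delta$ the algebra $\cM_{\vec\alpha}$ is a separable purely atomic tracial von Neumann algebra whose $M_m(\C)$-summands have weights $\alpha_{m,1}\ge\alpha_{m,2}\ge\cdots$ (this is where the ordering built into $\Delta$ is used), so $\rho_{\cM_{\vec\alpha}}(m,n)=\alpha_{m,n}$ for all $m,n$. By \cite[Lemma~3.2]{FG2023} (see the discussion preceding this lemma and the footnote to \eqref{eq.alpha-j}), the numbers $\rho_{\cM}(m,n)$ are computed from $\Th(\cM)$; in particular, for each $m,n$ the assignment $\Th(\cM)\mapsto\rho_{\cM}(m,n)$ is continuous for the logic topology (equivalently, $\rho_\cM(m,n)$ is the value on $\cM$ of a $0$-ary definable predicate expressing, in an approximate sense, the existence of $n$ orthogonal minimal central projections whose corners are copies of $M_m(\C)$ and maximizing the least of their traces). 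Hence $\Th(\cM_{\vec\alpha})=\Th(\cM_{\vec\beta})$ forces $\vec\alpha=\vec\beta$, and if $\Th(\cM_{\vec\alpha^{(i)}})\to\Th(\cM_{\vec\alpha})$ then $\alpha^{(i)}_{m,n}\to\alpha_{m,n}$ for every $m,n$; since the $L^1$-topology on $\Delta$ coincides with the topology of pointwise convergence (all elements of $\Delta$ being probability vectors, by Scheff\'e's lemma), this gives $\vec\alpha^{(i)}\to\vec\alpha$ in $\Delta$. So $\Phi^{-1}$ is continuous.

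It remains to show that $\vec\alpha\mapsto\phi^{\cM_{\vec\alpha}}$ is continuous on $(\Delta,\norm{\cdot}_1)$ for each sentence $\phi$. The first ingredient is the finite-summand case, which is the induction underlying Proposition~\ref{P.dense}: for fixed matrix algebras $\cB_1,\dots,\cB_k$, the value $\phi^{\bigoplus_{j=1}^{k}(\cB_j,\gamma_j)}$ depends continuously on the weight vector $\vec\gamma$ over the closed $k$-simplex, uniformly over tuples from the unit balls. Indeed, $\vec\gamma$ enters only through traces $\tr(p(\mathbf x))=\sum_{j}\gamma_j\tr_{\cB_j}(p(\mathbf x^{(j)}))$, which are affine in $\vec\gamma$; the connectives are uniformly continuous on compact sets; and the unit ball of a direct sum, over which the quantifiers range, is the product of the unit balls of the summands and so is independent of $\vec\gamma$.

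The second ingredient is a small-weight perturbation estimate: for every formula $\phi$ and every $\epsilon>0$ there is $\delta>0$ such that whenever $\cM=(\cA,1-r)\oplus(\cN,r)$ and $\cM'=(\cA,1-r)\oplus(\cN',r)$ with $r<\delta$, then $|\phi^{\cM}(\mathbf a)-\phi^{\cM'}(\mathbf a)|<\epsilon$ for every tuple $\mathbf a$ from the unit ball of $\cA$ (in particular $|\phi^\cM-\phi^{\cM'}|<\epsilon$ for sentences). This is proved by induction on the number of quantifiers in $\phi$: a contraction supported in a weight-$r$ summand has $\norm{\cdot}_2\le\sqrt r$, so replacing the $\cN$- (resp.\ $\cN'$-) component of any contraction by $0$ changes the value of $\phi$ by at most $\omega_\phi(\sqrt r)$, where $\omega_\phi$ is a modulus of uniform continuity of $\phi$ with respect to $\norm{\cdot}_2$ on operator-norm balls (such a modulus exists because traces of $*$-polynomials are Lipschitz in $\norm{\cdot}_2$ on norm balls and the connectives are uniformly continuous on compacta); and after such a replacement a quantifier-free formula takes the same value in $\cM$ as in $\cM'$, since the trace of a $*$-polynomial in elements of $\cA\oplus0$ equals $(1-r)$ times the corresponding $\cA$-trace plus $r$ times the constant term in either algebra. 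Each quantifier layer contributes one further error term $\omega_\phi(\sqrt r)$, and with $\phi$ fixed the total error tends to $0$ as $r\to0$. Now continuity follows: given $\vec\alpha\in\Delta$, a sentence $\phi$, and $\epsilon>0$, choose a finite index set $F$ with $\sum_{(m,n)\notin F}\alpha_{m,n}$ small enough that the perturbation estimate gives error below $\epsilon/3$; the same bound on the tail then holds for any $\vec\beta$ with $\norm{\vec\alpha-\vec\beta}_1$ small. Replacing, in $\cM_{\vec\alpha}$ and in $\cM_{\vec\beta}$ alike, all summands outside $F$ by a single copy of $\C$ of the same total weight changes $\phi$ by less than $\epsilon/3$ on each side; the two resulting algebras are finite direct sums over $F\cup\{\ast\}$ of the fixed algebras $M_{m(i)}(\C)$ ($i\in F$) together with $\C$ (at $\ast$), and their weight vectors lie at $L^1$-distance at most $\norm{\vec\alpha-\vec\beta}_1$, so the finite-summand case gives error below $\epsilon/3$ once $\norm{\vec\alpha-\vec\beta}_1$ is small. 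Chaining the three estimates yields $|\phi^{\cM_{\vec\alpha}}-\phi^{\cM_{\vec\beta}}|<\epsilon$; hence $\Phi$ is continuous and therefore a homeomorphism onto its image.

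The main obstacle is the passage from finitely to countably many summands in the continuity statement, and the substance there is the small-weight perturbation estimate: one needs that a summand of small total weight contributes uniformly little to the value of any fixed formula and that what remains after deleting it is insensitive to the isomorphism type of the deleted piece. Everything else is either a direct appeal to \cite{FG2023} (injectivity and continuous recovery of $\rho$) or a repetition of the formula-induction already used for Proposition~\ref{P.dense}.
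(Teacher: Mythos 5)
Your proof is correct, and for one half of the statement it coincides with the paper's: injectivity and continuity of the inverse are obtained, exactly as in the paper, from \cite[Lemma 3.2]{FG2023}, i.e.\ from the fact that the weights $\rho_{\cM}(m,n)$ are recovered continuously from $\Th(\cM)$, combined with the observation that pointwise convergence in $\Delta$ implies $L^1$-convergence. Where you diverge is the forward direction: the paper simply cites \cite[Theorem 2.3]{FG2023}, which gives continuity of the theory of a direct sum/integral as a function of the distribution of the fiber theories, whereas you prove the needed special case from scratch by (i) the finite-summand continuity in the weights (the same formula-induction the paper uses in Proposition \ref{P.dense}, exploiting that traces are affine in the weights and that the unit ball of the direct sum does not depend on them) and (ii) a small-weight perturbation estimate showing that a tail summand of total weight $r$ perturbs any fixed formula by an amount controlled by moduli of uniform continuity at $\sqrt{r}$, independently of the isomorphism type of the tail. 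Your induction for (ii) is sound: replacing the tail component of a quantified contraction by $0$ moves it by at most $\sqrt{r}$ in $\norm{\cdot}_2$, quantifier-free formulas at tuples supported on the complement of the tail depend on the tail only through its weight, and each quantifier layer contributes one error term that vanishes as $r \to 0$. The trade-off is the usual one: the citation is shorter and applies to general direct integrals, while your argument is self-contained and makes the uniform estimates explicit. (Two harmless imprecisions: after truncating, the two finite weight vectors differ in $L^1$ by at most twice $\norm{\vec{\alpha}-\vec{\beta}}_1$ rather than $\norm{\vec{\alpha}-\vec{\beta}}_1$, since the tail weights may differ; and one should note that a summand formally assigned weight $0$ does not affect the value of any formula, so the finite-simplex continuity statement makes sense up to the boundary.)
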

	
	\begin{proof}
		\cite[Theorem 2.3]{FG2023} implies that the theory of $\cM_{\vec{\alpha}}$ depends continuously on the weights $\vec{\alpha}$.  The construction in \cite[Lemma 3.2]{FG2023} shows that $\alpha_{m,n} = \rho_{\cM_{\vec{\alpha}}}(m,n)$ can be recovered from $\Th(\cM_{\vec{\alpha}})$.  In particular, one can see from this that for each $m, n \geq 1$, if $\vec{\alpha} \in \Delta$ and the theory of $\cN$ is sufficiently close to that of $\cM_{\vec{\alpha}}$, then $\rho_{\cN}(m,n)$ will be close to $\alpha_{m,n}$.
	\end{proof}
	
	\begin{prop}
		The set of $\vec{\alpha} \in \Delta$ such that $\Th(\cM_{\vec{\alpha}})$ has quantifier elimination is comeager.
	\end{prop}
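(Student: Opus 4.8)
The plan is to combine Proposition \ref{prop: QE linear inequality} with a Baire category argument inside $\Delta$, which is a closed subset of $\ell^1$ and hence a complete metric space. First I would rephrase the failure of quantifier elimination as a countable union of hyperplane conditions on $\vec\alpha$. Since $\cM_{\vec\alpha}$ is purely atomic we have $\alpha_0=0$, and grouping the one-dimensional summands of a common weight always yields a decomposition meeting conditions (1) and (2) of Proposition \ref{prop: QE linear inequality}; hence $\Th(\cM_{\vec\alpha})$ fails quantifier elimination if and only if condition (3) of that proposition fails for this grouped decomposition. Unfolding a failing integer relation — splitting each coefficient attached to a grouped one-dimensional block $(\C,w)^{\oplus\mu}$ into $\pm 1$'s, and leaving the coefficients attached to matrix summands unchanged — shows that $\Th(\cM_{\vec\alpha})$ fails quantifier elimination precisely when there is a finite set $S\subseteq\N\times\N$ with $|S|\geq 2$, integers $(r_{m,n})_{(m,n)\in S}$ with $0<|r_{m,n}|\leq m$, and $\alpha_{m,n}>0$ for all $(m,n)\in S$, such that
\[
L_{S,\vec r}(\vec\alpha):=\sum_{(m,n)\in S}\frac{r_{m,n}}{m}\,\alpha_{m,n}=0
\]
(the case $|S|=1$ is excluded, since it would force $\alpha_{m,n}=0$ on that single slot). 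There are only countably many pairs $(S,\vec r)$, and each $Z_{S,\vec r}:=\{\vec\alpha\in\Delta: L_{S,\vec r}(\vec\alpha)=0\}$ is closed, because $L_{S,\vec r}$ depends on finitely many coordinates and coordinate projections are $1$-Lipschitz on $\ell^1$. So it suffices to show each $Z_{S,\vec r}$ is nowhere dense.

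To see $Z_{S,\vec r}$ has empty interior, take $\vec\alpha\in\Delta$ and $\epsilon>0$ with $L_{S,\vec r}(\vec\alpha)=0$, and perturb $\vec\alpha$ slightly inside $\Delta$ so as to make $L_{S,\vec r}$ nonzero. Fix a dimension $m_0$ occurring in $S$ and let $n_0$ be the least $n$ with $(m_0,n)\in S$, so that $(m_0,i)\notin S$ for $i<n_0$. Let $j\leq n_0$ be minimal with $\alpha_{m_0,j}=\alpha_{m_0,j+1}=\dots=\alpha_{m_0,n_0}$. For small $\delta>0$, let $\vec\alpha'$ be obtained from $\vec\alpha$ by adding $\delta$ to each of $\alpha_{m_0,j},\dots,\alpha_{m_0,n_0}$; then $\vec\alpha'$ has nonnegative entries, still satisfies the monotonicity constraints defining $\Delta$ (the new value on $[j,n_0]$ is $\leq\alpha_{m_0,j-1}$, since $\alpha_{m_0,j-1}>\alpha_{m_0,j}$ when $j\geq 2$, and is $\geq\alpha_{m_0,n_0+1}$ trivially), and satisfies $L_{S,\vec r}(\vec\alpha')=\delta\,r_{m_0,n_0}/m_0\neq 0$ because $(m_0,n_0)$ is the only element of $S$ among the perturbed coordinates. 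Rescaling, $\vec\beta:=\vec\alpha'/\|\vec\alpha'\|_1\in\Delta$, and by homogeneity $L_{S,\vec r}(\vec\beta)=L_{S,\vec r}(\vec\alpha')/\|\vec\alpha'\|_1\neq 0$, while $\|\vec\beta-\vec\alpha\|_1\to 0$ as $\delta\to 0$. Hence $\Delta\setminus Z_{S,\vec r}$ is dense, so $Z_{S,\vec r}$ is nowhere dense.

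Combining the two steps, the non-quantifier-elimination locus is contained in the meager set $\bigcup_{(S,\vec r)}Z_{S,\vec r}$, so the quantifier-elimination locus is comeager in $\Delta$. The main obstacle is the nowhere-density step: conceptually it just says that the ``infinite simplex'' $\Delta$ is not locally contained in any proper finite-codimension affine subspace, but one must carry out the perturbation while respecting the order constraints $\alpha_{m,n}\geq\alpha_{m,n+1}\geq 0$ — which is exactly why one perturbs a whole constant block $[j,n_0]$ of a single column rather than one coordinate, and why the minimality of $n_0$ is used, namely to guarantee that no cancellation occurs among the terms of $S$ that are touched.
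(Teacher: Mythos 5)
There is a genuine gap in your reduction of the failure of quantifier elimination to a \emph{countable} union of hyperplanes. Condition (3) of Proposition \ref{prop: QE linear inequality} quantifies over all integer vectors $(r_j)_{j\in J_1\cup J_2}$ with $|r_j|\le n_j$, and since $J_1\cup J_2$ may be infinite, a failing relation may have \emph{infinitely many} nonzero coefficients, with no finite sub-relation holding. Concretely, take $\vec\alpha$ with $\alpha_{1,n}=2^{-n}$ (so $\cM_{\vec\alpha}=\bigoplus_{n\ge 1}(\C,2^{-n})$): the projections $1\oplus 0\oplus 0\oplus\cdots$ and $0\oplus 1\oplus 1\oplus\cdots$ both have trace $1/2$ but are not automorphically conjugate (all weights are distinct, so every automorphism fixes each summand), hence quantifier elimination fails; yet for any \emph{finite} $S$ and signs $r_n\in\{\pm 1\}$ one has $\bigl|\sum_{n\in S}r_n2^{-n}\bigr|\ge 2^{-\max S}>0$ by uniqueness of binary expansions, so no finite relation among the positive coordinates vanishes. (This particular point happens to land in one of your $Z_{S,\vec r}$ only via the zero coordinates $\alpha_{m,n}=0$, $m\ge 2$; a variant with all coordinates positive and chosen to avoid the countably many finite relations, but still satisfying $\alpha_{1,1}=\tfrac12=\sum_{(m,n)\ne(1,1)}\alpha_{m,n}$, lies in the non-QE locus and in none of your $Z_{S,\vec r}$.) So the asserted inclusion of the non-QE locus into $\bigcup_{(S,\vec r)}Z_{S,\vec r}$ is false, and the "unfolding" step does not produce a finite $S$. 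Allowing infinite $S$ destroys countability of the union, so meagerness no longer follows.

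The nowhere-density argument itself (perturbing a constant block of a single column and rescaling) is fine, and this is genuinely the delicate point once one has a correct countable covering; but the covering must also control infinitely supported coincidences of traces. The paper does this with two comeager sets: $F$, the $\vec\alpha$ whose coordinates are $\Q$-linearly independent (which kills all finite relations, as in your argument), together with $G=\bigcap_\ell\bigcup_{k\ge\ell}G_k$, where $G_k$ requires the tail mass $1-\sum_{m,n\le k}\alpha_{m,n}$ to be smaller than the minimal gap between distinct projection traces realizable in the $k\times k$ corner. The tail condition is exactly what reduces an arbitrary coincidence of traces to a finite one. To repair your proof you would need to add such a tail-control condition (or an equivalent), at which point you have essentially reconstructed the paper's argument.
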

	
	\begin{proof}
		Let $\cM_{\vec{\alpha},k} = \bigoplus_{1 \leq m,n \leq k} (M_m(\C),\alpha_{m,n}) \subseteq \cM_{\vec{\alpha}}$, and let $\cM_{\vec{\alpha},k}^{\perp}$ be the direct sum over the complementary indices.   Let $\tau_{\vec{\alpha}}$ be the trace on $\cM_{\vec{\alpha}}$.  Let
		\[
		\epsilon_k(\vec{\alpha}) = \min \{ |\tau_{\vec{\alpha}}(p) - \tau_{\vec{\alpha}}(q)|: p, q \text{ projections in } \cM_{\vec{\alpha},k} \text{ with } \tau(p) \neq \tau(q) \}. 
		\]
		Let
		\[
		G_k = \{ \vec{\alpha} \in \Delta:  1 - \sum_{1 \leq m,n \leq k} \alpha_{m,n} < \epsilon_k(\vec{\alpha}) \}.
		\]
		Note that $G_k$ is open in $\Delta$, hence also $\bigcup_{k \geq \ell} G_k$ is open.  Moreover, contains the set of $\vec{\alpha}$ such that $\vec{\alpha}$ is supported on $\{1,\dots,k\}^2$, and so $\bigcup_{k \geq \ell} G_k$ is dense. 
		Therefore,
		\[
		G = \bigcap_{\ell \in \N} \bigcup_{k \geq \ell} G_k
		\]
		is comeager.  Furthermore,
		\[
		F = \{\vec{\alpha} \in \Delta: \alpha_{m,n} \text{ are linearly independent over } \Q\}
		\]
		is comeager because non-vanishing of $\Q$-linear combinations is a countable family of open conditions. Hence, $F \cap G$ is comeager.
		
		We claim that if $\vec{\alpha} \in F \cap G$, then $\cM_{\vec{\alpha}}$ admits quantifier elimination.  Let $p$ and $q$ be projections of the same trace in $\cM_{\vec{\alpha}}$.  For each $k$, write $p = p_k \oplus p_k^{\perp}$ and $q = q_k \oplus q_k^{\perp}$ with respect to the decomposition $\cM_{\vec{\alpha}} = \cM_{\vec{\alpha},k} \oplus \cM_{\vec{\alpha},k}^{\perp}$.  If $\vec{\alpha} \in G_k$, then by construction of $G_k$, we have
		\[
		|\tau_{\vec{\alpha}}(p_k) - \tau_{\vec{\alpha}}(q_k)| = |\tau_{\vec{\alpha}}(p_k^{\perp}) - \tau_{\vec{\alpha}}(q_k^{\perp})| < \epsilon_k(\vec{\alpha}),
		\]
		which forces $\tau_{\vec{\alpha}}(p_k) = \tau_{\vec{\alpha}}(q_k)$ by definition of $\epsilon_k(\vec{\alpha})$.  Now let $p_{m,n}$ and $q_{m,n}$ be the components of $p$ and $q$ respectively in the direct summand $(M_m(\C),\alpha_{m,n})$. 
		Because the $\alpha_{m,n}$'s are linearly independent over $\Q$, the condition that $\tau_{\vec{\alpha}}(p_k) = \tau_{\vec{\alpha}}(q_k)$ forces that $\tr_m(p_{m,n}) = \tr_m(q_{m,n})$ for $m, n \leq k$.  Because $\vec{\alpha} \in G$, we know that $\vec{\alpha} \in G_k$ for infinitely many $k$, and thus $\tr_m(p_{m,n}) = \tr_m(q_{m,n})$ for all $m, n$, which means that $p$ and $q$ are conjugate by an automorphism.  Therefore, by Theorem \ref{thm: main QE}, $\Th(\cM_{\vec{\alpha}})$ admits quantifier elimination.
	\end{proof}

	\subsection{Matrix amplification and approximate embedding} \label{sec: amplification}
	
	In Theorem \ref{thm: main MC}, we assumed the condition that $M_2(\cM)$ embeds into $\cM^{\cU}$.  While this condition holds automatically if $\cM$ is Connes-embeddable or if $\cM$ is existentially closed, we do not know if it holds for all $\mathrm{II}_1$ factors.  In this section, we investigate this problem by giving a series of equivalent conditions.  This expands upon the results about the ``universal fundamental group'' by Goldbring and Hart \cite[Proposition 4.17]{GH2017}.\footnote{The reader should be warned that in this proposition, clauses (1) and (2) should start with `For any II$_1$ factor $\cM$,\dots’.} 
	
	Recall that for $\mathrm{II}_1$ factors $\cM$ and $\cN$, the statement $\Th_\exists(\cM) = \Th_\exists(\cN)$ means that for every $\inf$-sentence $\phi$, we have $\phi^{\cM} = \phi^{\cN}$.  An equivalent statement is that for some ultrafilter $\cU$, we have that $\cM$ embeds into $\cN^{\cU}$ and $\cN$ embeds into $\cM^{\cU}$.  For instance, when $\cM$ is Connes-embeddable, then $\Th_\exists(\cM) = \Th_\exists(\cR)$.  We will show that the condition of $M_2(\cM)$ embedding into $\cM^{\cU}$ is equivalent to $\Th_\exists(\cM^t) = \Th_\exists(\cM)$ for some or all $t \in (0,\infty) \setminus \{1\}$, where $\cM^t$ is the $t$th compression/amplification of $\cM$.
	
	\begin{prop} \label{prop: amplification limit}
		Let $\cM$ be a $\mathrm{II}_1$ factor.Then
		\begin{align*}
			\lim_{t \to \infty} \Th_\exists(\cM^t) &= \Th_\exists(\cM \otimes \cR), \\
			\lim_{t \to 0} \Th_\exists(\cM^t) & \text{ exists.}
		\end{align*}
	\end{prop}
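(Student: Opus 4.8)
The plan is to work one $\inf$-sentence at a time: fix an $\inf$-sentence $\phi$, write $f_\phi(t)=\phi^{\cM^t}$, and set $c_\phi=\phi^{\cM\otimes\cR}$. Since the topology on existential theories is that of pointwise evaluation on $\inf$-sentences, it suffices to prove that $\lim_{t\to\infty}f_\phi(t)=c_\phi$ and that $\lim_{t\to 0^+}f_\phi(t)$ exists, for every $\phi$ (the resulting limit theories are then automatically realized by ultraproducts $\prod_{n\to\cU}\cM^{t_n}$). I would first record the elementary facts. Tensoring with $1$ gives a trace-preserving embedding $\cM^t\hookrightarrow\cM^{nt}=M_n(\C)\otimes\cM^t$, hence $f_\phi(nt)\le f_\phi(t)$ for every positive integer $n$; and $\cM^t\hookrightarrow\cM^t\otimes\cR\cong\cM\otimes\cR$ (the isomorphism because $\cM\otimes\cR$ is McDuff, so has full fundamental group), hence $c_\phi\le f_\phi(t)$ for all $t$. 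Finally, combining $\cR\hookrightarrow\prod_{n\to\cU}M_n(\C)$ with $\cM^n=M_n(\C)\otimes\cM$ yields $\cM\otimes\cR\hookrightarrow\prod_{n\to\cU}\cM^n$, and together with $\cM^n\hookrightarrow\cM\otimes\cR$ this forces $\lim_{n\to\infty}f_\phi(n)=c_\phi$ along the integers; the same statement applied to $\cM^{t_0}$ in place of $\cM$ gives $\lim_{n\to\infty}f_\phi(nt_0)=c_\phi$ for any fixed $t_0>0$.

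The technical core is that $t\mapsto\Th_\exists(\cM^t)$ is continuous on $(0,\infty)$. Using $\cM^{t_0 s}=(\cM^{t_0})^s$, this reduces to continuity at $s=1$ for an arbitrary $\mathrm{II}_1$ factor $\cN$. For a sequence $s_k\to 1$ with $s_k\le 1$ I would write $\cN^{s_k}=p_k\cN p_k$ with $\tau(p_k)=s_k$, and consider the unital $*$-preserving maps $\Psi_k\colon\cN\to p_k\cN p_k$, $\Psi_k(x)=p_kxp_k$, and $\Phi_k\colon p_k\cN p_k\to\cN$, $\Phi_k(y)=y+\tau_{p_k\cN p_k}(y)(1-p_k)$. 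A direct computation shows both are asymptotically multiplicative and (exactly or asymptotically) trace-preserving as $k\to\infty$, all error terms being bounded in $2$-norm by a constant times $\tau(1-p_k)^{1/2}\to 0$ on bounded sets. Hence they induce trace-preserving embeddings $\cN^{\cU}\hookrightarrow\prod_{k\to\cU}\cN^{s_k}$ and $\prod_{k\to\cU}\cN^{s_k}\hookrightarrow\cN^{\cU}$, so $\cN\hookrightarrow\prod_{k\to\cU}\cN^{s_k}\hookrightarrow\cN^{\cU}$, which pins down $\Th_\exists(\prod_{k\to\cU}\cN^{s_k})=\Th_\exists(\cN)$. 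For $s_k\ge 1$ one runs the same argument using $\cN=q_k\cN^{s_k}q_k$ with $\tau_{\cN^{s_k}}(q_k)=1/s_k\to 1$, and a general sequence $s_k\to1$ splits into these two cases along $\cU$. As this holds for every $\cU$, $\lim_{s\to1}\Th_\exists(\cN^s)=\Th_\exists(\cN)$.

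With continuity available, both limits follow from a soft ``a long enough interval contains an integer multiple'' argument. For $t\to\infty$: given $\epsilon>0$, pick $N$ with $f_\phi(N)<c_\phi+\epsilon/2$; by continuity $f_\phi<c_\phi+\epsilon$ on $N(1-\eta,1+\eta)$ for some $\eta>0$, hence on $\bigcup_{n\ge1}nN(1-\eta,1+\eta)$; for $u$ large the interval $\bigl(u/(N(1+\eta)),u/(N(1-\eta))\bigr)$ has length $\to\infty$, so contains a positive integer, placing $u$ in that union. Thus $f_\phi(u)<c_\phi+\epsilon$ for all large $u$, and with $f_\phi\ge c_\phi$ we get $\lim_{t\to\infty}f_\phi(t)=c_\phi$. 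For $t\to 0^+$: let $d_\phi=\limsup_{t\to0^+}f_\phi(t)$ (finite, as $\phi$ is bounded); given $\epsilon>0$ choose arbitrarily small $s_0$ with $f_\phi(s_0)>d_\phi-\epsilon/2$, use continuity to get $f_\phi>d_\phi-\epsilon$ on $s_0(1-\eta,1+\eta)$ and hence, via $f_\phi(s/n)\ge f_\phi(s)$, on $\bigcup_{n\ge1}\tfrac1n s_0(1-\eta,1+\eta)$; as $u\to0^+$ the interval $\bigl(s_0(1-\eta)/u,\,s_0(1+\eta)/u\bigr)$ eventually contains a positive integer, so $\liminf_{t\to0^+}f_\phi(t)\ge d_\phi$ and the limit exists.

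The hard part is the continuity statement, i.e.\ the verification that $\Phi_k$ and $\Psi_k$ become $*$-homomorphisms in the limit; this is the only point where genuine structure of amplifications is used (that compressing a $\mathrm{II}_1$ factor by a projection of trace near $1$ changes it very little), beyond the trivial inclusions. I expect the remaining steps---the reduction to a single $\inf$-sentence, {\L}o\'s's theorem for $\inf$-sentences, and the interval combinatorics---to be routine, and I would only need to be careful with the ultraproduct bookkeeping (well-definedness and automatic isometry of the induced maps between $\mathrm{II}_1$ factor ultraproducts, and the harmless splitting of a sequence into its $\le 1$ and $>1$ parts). The group-theoretic framework of \cite{GH2017} provides an alternative language for part of this, but the direct route above seems cleanest.
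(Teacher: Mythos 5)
Your proof is correct, but it is organized quite differently from the paper's. The paper's proof rests on a single quantitative estimate: for $s<t$, writing $t=ms+\epsilon$ with $m\in\N$ and $0\le\epsilon<s$, the non-unital embedding $\iota_{s,t}\colon\cM^s\to\cM^t$, $x\mapsto x^{\oplus m}\oplus 0_{\cM^\epsilon}$, distorts traces of unit-ball elements by at most $s/t$, whence $\phi^{\cM^t}\le\phi^{\cM^s}+\omega_F(s/t)$ for a modulus of continuity $\omega_F$ attached to $\phi$; letting $t\to\infty$ (resp.\ $s\to0$) immediately identifies both limits as $\inf_t\phi^{\cM^t}$ and $\sup_t\phi^{\cM^t}$, and the identification of the first with $\phi^{\cM\otimes\cR}$ uses the same two embeddings you use. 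You instead split the paper's single estimate into its two ingredients --- exact monotonicity along integer multiples (the ``diagonal copying'' $x\mapsto x^{\oplus n}$) and continuity of $t\mapsto\Th_\exists(\cM^t)$ (the ``small corner'' perturbation, which you implement via the asymptotically multiplicative, asymptotically trace-preserving maps $\Psi_k,\Phi_k$ and ultraproducts) --- and then recombine them with an interval-covering argument. The underlying mechanism is identical, but your route is longer and requires more bookkeeping (well-definedness of the induced maps on ultraproducts, the case split $s_k\le1$ versus $s_k\ge1$, the combinatorics of which intervals contain integers); what it buys is an explicit intermediate statement, continuity of the existential theory of the amplification in $t$, which the paper's inequality also implies but never isolates. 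One small caution: the subsequent proposition in the paper uses the facts $\lim_{t\to\infty}\phi^{\cM^t}=\inf_t\phi^{\cM^t}$ and $\lim_{t\to0}\phi^{\cM^t}=\sup_t\phi^{\cM^t}$ explicitly; your argument does yield both (from $c_\phi\le f_\phi(t)$ for all $t$, and from $f_\phi(s)\le f_\phi(s/n)$ with $s/n\to0$, respectively), but you should record them if this proof is to feed into that later argument.
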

	
	\begin{proof}
		Consider an existential sentence $\phi = \inf_{x_1,\dots,x_n} \psi(x_1,\dots,x_n)$ where $\psi$ is a quantifier-free formula and $x_j$ ranges over the unit ball.  We can express
		\begin{equation} \label{eq: inf-formula expression}
			\psi(\mathbf{x}) = F(\re \tr(p_1(\mathbf{x})), \dots, \re \tr(p_k(\mathbf{x})))
		\end{equation}
		for some non-commutative $*$-polynomials $p_j$ and a continuous real-valued function $F$.  By rescaling the input variables to $F$, assume without loss of generality that $\norm{p_j(\mathbf{x})} \leq 1$ when $x_1$, \dots, $x_n$ are in the unit ball.  Let $\omega_F$ be the modulus of continuity of $F$ with respect to the $\ell^\infty$-norm on $[-1,1]^k$.  Suppose that $s < t$.  Write
		\[
		t = ms + \epsilon, \text{ where } m \in \N \text{ and } \epsilon \in [0,t/s).
		\]
		Let $\iota_{s,t}: \cM^s \to \cM^t$ be the non-unital $*$-homomorphism $\iota_{s,t}(x) = x^{\oplus m} \oplus 0_{\cM^\epsilon}$, and note that
		\[
		|\tr^{\cM^t}(\iota_{s,t}(y)) - \tr^{\cM^s}(y)| \leq \frac{\epsilon}{t} \norm{y} \leq \frac{s}{t} \norm{y}.
		\]
		Hence, from \eqref{eq: inf-formula expression} and the uniform continuity of $F$,
		\[
		\psi^{\cM^t}(\iota_{s,t}(\mathbf{x})) \leq \psi^{\cM^s}(\mathbf{x}) + \omega_F(s/t), \text{ hence } \phi^{\cM^t} \leq \phi^{\cM^s} + \omega_F(s/t).
		\]
		For each $s \in (0,\infty)$, we have
		\[
		\limsup_{t \to \infty} \phi^{\cM^t} \leq \liminf_{t \to \infty} \left[ \phi^{\cM^s} + \omega_F(s/t) \right] = \phi^{\cM^s}.
		\]
		Since $s$ was arbitrary, it follows that $\lim_{t \to \infty} \phi^{\cM^t} = \inf_{t \in (0,\infty)} \phi^{\cM^t}$.  A similar argument shows that $\lim_{t \to 0^+} \phi^{\cM^t} = \sup_{t \in (0,\infty)} \phi^{\cM^t}$.
		It remains to show that the limit as $t \to \infty$ agrees with $\Th_\exists(\cM \otimes \cR)$.  First, note that $\cM$ embeds into $\cM \otimes \cR$, so also $\cM^t$ embeds into $(\cM \otimes \cR)^t = \cM \otimes \cR^t \cong \cM \otimes \cR$.  Thus, for each $\inf$-sentence $\phi$, we have $\phi^{\cM \otimes \cR} \leq \lim_{t \to \infty} \phi^{\cM^t}$.  For the opposite inequality, note that $\cM \otimes \cR$ embeds into $\cN = \prod_{n \to \cU} \cM \otimes M_n(\C)$.
	\end{proof}
	
	\begin{prop}
		Let $\cM$ be a $\mathrm{II}_1$ factor.  Then the following are equivalent:
		\begin{enumerate}[(1)]
			\item $M_2(\cM)$ embeds into $\cM^{\cU}$ for some ultrafilter $\cU$.
			\item $\alpha \cM \oplus (1 - \alpha) \cM$ embeds into $\cM^{\cU}$ for some ultrafilter $\cU$ and some $\alpha \in (0,1)$.
			\item $\Th_\exists(\cM) = \Th_\exists(\cM \otimes \cR)$.
			\item $\Th_\exists(\cM^t) = \Th_\exists(\cM)$ for all $t > 0$.
			\item $\Th_\exists(\cM^t) = \Th_\exists(\cM)$ for some $t \neq 1$.
			\item $\lim_{t \to \infty} \Th_\exists(\cM^t) = \lim_{t \to 0} \Th_\exists(\cM^t)$.
			\item There exists a McDuff $\mathrm{II}_1$ factor $\cN$ such that $\Th_\exists(\cM) = \Th_\exists(\cN)$.
			\item There exists a Gamma $\mathrm{II}_1$ factor $\cN$ such that $\Th_\exists(\cM) = \Th_\exists(\cN)$.
		\end{enumerate}
	\end{prop}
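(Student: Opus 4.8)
The plan is to translate every clause into a comparison of existential theories of amplifications $\cM^t$ and then route all eight through a single cycle. I will freely use: the criterion (recalled just before the proposition) that for separable $\cX$ one has $\cX\hookrightarrow\cY^{\cU}$ for some $\cU$ iff $\phi^{\cY}\leq\phi^{\cX}$ for every $\inf$-sentence $\phi$; the amplification identities $M_k(\cX)=\cX^k$, $(\cX^{\cU})^t\cong(\cX^t)^{\cU}$ for $\mathrm{II}_1$ factors, $\cX^a\otimes\cX'^{\,b}\cong(\cX\otimes\cX')^{ab}$, $\cR^t\cong\cR$; the fact that a unital trace-preserving embedding $\cX\hookrightarrow\cY$ amplifies to $\cX^t\hookrightarrow\cY^t$ for every real $t>0$; and Proposition \ref{prop: amplification limit}. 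Two consequences are recorded at the outset. First, since $\cM\hookrightarrow M_2(\cM)$ diagonally, clause (1) is equivalent to $\Th_{\exists}(\cM^2)=\Th_{\exists}(\cM)$ (and, for any $\mathrm{II}_1$ factor $\cX$, ``$M_2(\cX)\hookrightarrow\cX^{\cU}$ for some $\cU$'' $\iff$ ``$\Th_{\exists}(\cX^2)=\Th_{\exists}(\cX)$''). Second, a ``rescaling lemma'': $\Th_{\exists}(\cA)=\Th_{\exists}(\cB)$ implies $\Th_{\exists}(\cA^s)=\Th_{\exists}(\cB^s)$ for all $s>0$, since the two-sided defining embeddings amplify.

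\emph{The amplification block} (1) $\Leftrightarrow$ (3) $\Leftrightarrow$ (4) $\Leftrightarrow$ (5) $\Leftrightarrow$ (6). For (5) $\Rightarrow$ (3), replace $t$ by $1/t$ via the rescaling lemma if $t<1$, so assume $t>1$; iterating the rescaling lemma gives $\Th_{\exists}(\cM^{t^n})=\Th_{\exists}(\cM)$ for all $n$, and since $t^n\to\infty$, Proposition \ref{prop: amplification limit} yields $\Th_{\exists}(\cM)=\Th_{\exists}(\cM\otimes\cR)$. For (3) $\Rightarrow$ (1): (3) forces $\cM\otimes\cR\hookrightarrow\cM^{\cU}$, and $M_2(\cM)=\cM\otimes M_2(\C)\hookrightarrow\cM\otimes\cR\hookrightarrow\cM^{\cU}$. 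For (1) $\Rightarrow$ (4): from (1) we get $\Th_{\exists}(\cM^2)=\Th_{\exists}(\cM)$, hence $\Th_{\exists}(\cM^{2^n})=\Th_{\exists}(\cM)$ for all $n\in\Z$; for arbitrary $t$, pick $n$ with $2^{-n}\leq t\leq 2^n$ and use the quantitative estimate $\phi^{\cM^b}\leq\phi^{\cM^a}+\omega(a/b)$ from the proof of Proposition \ref{prop: amplification limit}, then let $n\to\infty$ to obtain $\phi^{\cM^t}=\phi^{\cM}$, which is (4). Finally (4) $\Rightarrow$ (5) is trivial, and (4) $\Leftrightarrow$ (6) because (6) asserts $\inf_t\phi^{\cM^t}=\sup_t\phi^{\cM^t}$, which forces $\phi^{\cM^t}$ to be constant in $t$, hence equal to $\phi^{\cM}$.

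\emph{Attaching} (2), (7), (8). For (1) $\Rightarrow$ (2), use $\tfrac12\cM\oplus\tfrac12\cM\hookrightarrow M_2(\cM)\hookrightarrow\cM^{\cU}$. For (2) $\Rightarrow$ (3): the given embedding restricts to a unital trace-preserving map $\cM\hookrightarrow p\cM^{\cU}p\cong(\cM^{\alpha})^{\cU}$ with $\tau(p)=\alpha$, so amplifying by $1/\alpha$ gives $\cM^{1/\alpha}\hookrightarrow\cM^{\cU}$; writing $\beta=1/\alpha>1$ and iterating via $(\cX^{\cU})^{\beta}=(\cX^{\beta})^{\cU}$ and functoriality of the ultrapower, one gets $\cM^{\beta^n}\hookrightarrow\cM^{(n)}$ where $\cM^{(n)}$ is the $n$-fold $\cU$-ultrapower; composing into the union $\cM^{(\infty)}$ of the elementary chain $\cM\prec\cM^{\cU}\prec(\cM^{\cU})^{\cU}\prec\cdots$ (so $\cM^{(\infty)}\equiv\cM$) yields $\phi^{\cM}\leq\phi^{\cM^{\beta^n}}$ for all $n$, and letting $n\to\infty$ gives $\phi^{\cM}\leq\phi^{\cM\otimes\cR}$; the reverse always holds, so (3). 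For (3) $\Rightarrow$ (7) take $\cN=\cM\otimes\cR$ (McDuff, $\Th_{\exists}(\cN)=\Th_{\exists}(\cM)$); (7) $\Rightarrow$ (8) because McDuff factors have property Gamma. For (8) $\Rightarrow$ (1): a Gamma factor $\cN$ has a trace-$\tfrac12$ projection $p\in\cN'\cap\cN^{\cU}$ (the $m=2$ case of the asymptotically central partition characterization of Gamma), so $x\mapsto xp$ embeds $\cN$ unitally and trace-preservingly into $p\cN^{\cU}p\cong(\cN^{1/2})^{\cU}$, and amplifying by $2$ gives $M_2(\cN)\hookrightarrow\cN^{\cU}$; thus $\Th_{\exists}(\cN^2)=\Th_{\exists}(\cN)$, which transfers to $\Th_{\exists}(\cM^2)=\Th_{\exists}(\cM)$ via $\Th_{\exists}(\cN)=\Th_{\exists}(\cM)$ and the rescaling lemma, i.e.\ (1).

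\emph{Main obstacle.} The analytic heart --- that amplifying by a large factor drives $\Th_{\exists}(\cM^t)$ to $\Th_{\exists}(\cM\otimes\cR)$ --- is already supplied by Proposition \ref{prop: amplification limit}, so the work is mostly bookkeeping; the delicate points I expect to fuss over are (i) pinning down the amplification/ultrapower identities for non-integer $t$ and the correct direction of the embedding-versus-existential-theory correspondence at each step, and (ii) in (2) $\Rightarrow$ (3), arranging the iteration to use only functoriality of the ultrapower and $(\cX^{\cU})^{\beta}=(\cX^{\beta})^{\cU}$ rather than any Fubini-type statement for ultrafilters --- which is exactly what the passage to the elementary-chain union $\cM^{(\infty)}$ buys us.
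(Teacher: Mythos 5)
Your proposal is correct and follows essentially the same route as the paper: both rest on Proposition \ref{prop: amplification limit}, the cut-down $p\cM^{\cU}p \cong (\cM^{\alpha})^{\cU}$ via projection lifting for (2), and the asymptotically central trace-$\tfrac12$ projection supplied by property Gamma for (8). The only differences are organizational (you close the (8)-arrow at (1) rather than (2), derive (1)$\Rightarrow$(4) directly from the quantitative estimate $\phi^{\cM^t}\leq\phi^{\cM^s}+\omega_F(s/t)$ instead of via (3)$\Leftrightarrow$(4), and make the paper's implicit iterated-ultrapower step in (2)$\Rightarrow$(3) explicit with an elementary chain), none of which changes the substance.
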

	
	\begin{proof}
		(1) $\implies$ (2) because $(1/2) \cM \oplus (1/2) \cM$ is contained in $M_2(\cM)$.
		
		(2) $\implies$ (3).  Let $\iota: \alpha \cM \oplus (1 - \alpha) \cM \to \cM^{\cU}$ be an embedding where $\cU$ is an ultrafilter on index set $I$.  Let $p = \iota(1 \oplus 0)$.  Let $\Delta: \cM \to \alpha \cM \oplus (1 - \alpha) \cM$ be the diagonal map.  Then $\Delta(\cM)$ commutes with $p$ and hence $\operatorname{Ad}_p \circ \iota \circ \Delta$ gives an embedding $\cM \to p(\cM^{\cU})p$ since $\cM$ is a $\mathrm{II}_1$ factor.  Now $p$ lifts to a family of projections $(p_i)_{i \in I}$ with $\tr^{\cM}(p_i) = \tr^{\cM^{\cU}}(p) = \alpha$.  Since $p_i$ is unitarily conjugate to some fixed projection $p_0 \in \cM$ for all $i$, we have $p \cM^{\cU} p = \prod_{i \to \cU} p_i \cM p_i = (p_0 \cM p_0)^{\cU}$.  In other words, $\cM$ embeds into an ultraproduct of $\cM^{\alpha}$.  This also implies that $\cM^t$ embeds into an ultraproduct of $\cM^{t\alpha}$ for each $t \in (0,\infty)$.  Hence, $\cM^{1/\alpha^k}$ embeds into an ultraproduct of $\cM$ for each $k \in \cN$.  Thus, for an $\inf$-formula $\phi$,
		\[
		\phi^{\cM \otimes \cR} = \lim_{t \to \infty} \phi^{\cM^t} = \lim_{k \to \infty} \phi^{\cM^{1/\alpha^k}} \leq \phi^{\cM} \leq \phi^{\cM \otimes \cR}.
		\]
		Hence, $\Th_{\exists}(\cM) = \Th_{\exists}(\cM \otimes \cR)$.
		
		(3) $\implies$ (1).  Note $M_2(\cM)$ embeds into $\cM \otimes \cR$, which embeds into $\cM^{\cU}$.
		
		(3) $\iff$ (4).  When (3) holds, $\cM$ and $\cM \otimes \cR$ are embeddable into each other's ultrapowers, which implies that $\cM^t$ and $(\cM \otimes \cR)^t \cong \cM \otimes \cR^t \cong \cM \otimes \cR$ are embeddable into each other's ultrapowers.  Hence, $\Th_{\exists}(\cM^t) = \Th_{\exists}(\cM \otimes \cR) = \Th_{\exists}(\cM)$ for all $t \in (0,\infty)$.  Conversely, if $\Th_{\exists}(\cM) = \Th_{\exists}(\cM^t)$ for all $t$, then we have $\Th_{\exists}(\cM \otimes \cR) = \lim_{t \to \infty} \Th_{\exists}(\cM^t) = \Th_{\exists}(\cM)$.
		
		(4) $\implies$ (5) is immediate.
		
		(5) $\implies$ (6).  As in the proof of Proposition \ref{prop: amplification limit} or in (2) $\implies$ (3), since $\cM^t$ and $\cM$ embed into each other's ultrapowers, the same holds for $\cM^{t^k}$ for each $k \in \Z$, which implies (6).
		
		(6) $\implies$ (4).  This follows immediately from the fact that for any $\inf$-sentence $\phi$, we have $\lim_{t \to \infty} \phi^{\cM^t} = \inf_{t \in (0,\infty)} \phi^{\cM^t}$ and $\lim_{t \to 0} \phi^{\cM^t} = \sup_{t \in (0,\infty)} \phi^{\cM^t}$, which we showed in the proof of Proposition \ref{prop: amplification limit}.
		
		(3) $\implies$ (7) $\implies$ (8) is immediate by definition.
		
		(8) $\implies$ (2).  By assumption $\cM$ embeds into $\cN^{\cU}$.  Since $\cN$ has property Gamma, there exists a projection $p \in \cN^{\cU}$ that commutes with the image of $\cM$ (provided that ultrafilter $\cU$ is on a sufficiently large index set).  Then $\cM$ and $p$ generate a copy of $\alpha \cM \oplus (1 - \alpha) \cM$ in $\cN^{\cU}$, where $\alpha = \tr^{\cN^{\cU}}(p)$.  Finally, $\cN^{\cU}$ embeds into $\cM^{\cV}$ for some ultrafilter $\cV$, hence (2) holds.
	\end{proof}
	
	\begin{remark}
		By the usual arguments concerning countable saturation, if $\cM$ is separable, then it suffices to consider some or all free ultrafilters on $\N$ for conditions (1) and (2).
	\end{remark}
	
	\begin{remark}
		Similar reasoning shows that if $\cM^t$ embeds into $\cM^{\cU}$ for some $t > 1$, then $\cM \models \Th_{\exists}(\cM \otimes \cR)$, and hence $\Th_{\exists}(\cM) = \Th_{\exists}(\cM \otimes \cR)$.  Therefore, if these conditions fail, then $\cM^s$ does not embed into $(\cM^t)^{\cU}$ for any $s > t$.  Thus, all the existential theories of $\cM^t$ for $t \in \R_+$ are distinct and the first-order fundamental group is trivial.
		
		Compare \cite[Proposition 4.16]{GH2017} which showed that if the first-order fundamental group of $\cM$ is not all of $\R_+$, then it is countable and hence there are continuum many non elementary equivalent matrix amplifications of $\cM$.  The same argument of course applies to the fundamental group for the existential theory.  Note also from \cite[Proposition 5.1]{GH2016} that a negative solution to Connes embedding immediately implies the existence of continuum many existential theories of type $\mathrm{II}_1$ algebras (but not factors).
	\end{remark}
	
	\subsection{The non-tracial setting} \label{sec: nontracial}
	
	What major elementary classes of self-adjoint operator algebras admit quantifier elimination? The question for \cstar-algebras (both unital and non-unital) has been resolved in \cite{EFKV2017} and the results of the present paper, together with \cite{Farah2023}, resolve the question in case of tracial von Neumann algebras. What remains is the case of von Neumann algebras with arbitrary faithful normal states, in particular type $\mathrm{III}$ von Neumann algebras.  Metric languages for the non-tracial setting were given in \cite{Dabrowski2019,AGHS2025}; see \cite{AH2014,Ando2023} for ultraproducts in the non-tracial setting.
	
	For non-tracial factors, quantifier elimination and model completeness can depend on the choice of state.  For instance, on 
	$M_3(\C)$ consider the state $\phi(A) = \tr_3(AH)$ where $H = \diag(h_1,h_2,h_3)$ with $h_1 > h_2 > h_3$.  Let $t \in (0,1)$ such that $h_2 = th_1 + (1-t)h_3$, and let
	\[
	P = \begin{bmatrix}
		t & 0 & t^{1/2}(1-t)^{1/2} \\
		0 & 0 & 0 \\
		t^{1/2}(1-t)^{1/2} & 0 & 1 - t
	\end{bmatrix}, \qquad Q = \begin{bmatrix}
		0 & 0 & 0 \\
		0 & 1 & 0 \\
		0 & 0 & 0
	\end{bmatrix}.
	\]
	Then $P$ and $Q$ are projections and $\phi(P) = \phi(Q)$ but they are not conjugate by a state-preserving automorphism of $M_3(\C)$.  Hence, the theory of  $(M_3(\C),\phi)$ does not admit quantifier elimination.
	
	However, in the type $\mathrm{III}_1$ setting, the Connes-St{\o}rmer transitivity theorem \cite{CS1978} implies that all states are approximately unitarily equivalent, and hence for any two states the associated Ocneanu ultraproducts $(\cM,\phi)^{\cU}$ and $(\cM,\psi)^{\cU}$ are isomorphic, and so $(\cM,\phi)$ and $(\cM,\psi)$ are elementarily equivalent.  In fact, we believe the random matrix argument given here likely will adapt to the type $\mathrm{III}_1$ setting.  Indeed, let $\mathrm{T}$ be the theory of some type $\mathrm{III}_1$ factor $(\cM,\phi)$.  Since $\cM$ is type $\mathrm{III}$, we have $\cM \cong \cM \otimes M_n(\C)$.  Thus, the ultraproduct $(\cN,\psi) = \prod_{n \to \cU} (M_n(\C),\tr_n) \otimes (\cM,\phi)$ is a model of $\mathrm{T}$.  The random matrix construction of \S \ref{sec: MC factor proof} yields two elements $\mathbf{X}$ and $\mathbf{Y}$ in this ultraproduct such that $f^{\cN,\psi}(\mathbf{Y}) \leq f^{\cN,\psi}(\mathbf{X})$ for inf-formulas $f$, $\{\mathbf{X}\}'$ and $\{\mathbf{Y}\}'$ are definable sets with respect to parameters $\mathbf{X}$ and $\mathbf{Y}$ respectively,\footnote{Technically, one has to check that appropriate sets of left/right bounded elements in the commutant are definable sets, which could require a small additional argument.} and $\{\mathbf{X}\}'$ is a $\mathrm{III}_1$ factor and $\{\mathbf{Y}\}'$ is not.  Because $\mathrm{III}_1$ factors are an axiomatizable class \cite[Proposition 6.5.7]{GHScorrespondences}, this means that $\mathbf{X}$ and $\mathbf{Y}$ cannot have the same type.
	
	In the type $\mathrm{III}_\lambda$ setting for $\lambda \in (0,1)$, we do not know if this argument goes through because we would have to pay more attention to the choice of state, and the random matrix argument requires having models with a tensor product decomposition $(M_n(\C),\tr_n) \otimes (\cM,\phi)$. In the type $\mathrm{III}_0$ and type $\mathrm{II}_\infty$ setting, another issue arises, namely that type $\mathrm{III}_0$ and type $\mathrm{II}_\infty$ factors are not axiomatizable classes \cite[Corollary 8.6 and Proposition 8.3]{AGHS2025}, so examining factoriality of the relative commutant of $\mathbf{X}$ and $\mathbf{Y}$ may not distinguish their types.  Likely, a different approach is needed in these cases.

	\appendix
	
	\section{Model completeness and inf-formulas}
	
	This section proves the characterization of model completeness for theories of metric structure in terms of types and formulas.
	
	\begin{lem} \label{lem: type MC 2}
		Let $\mathrm{T}$ be an $\Lang$-theory.  Then the following are equivalent:
		\begin{enumerate}
			\item $\mathrm{T}$ is model complete, i.e.\ if $\cM$ and $\cN$ are models of $\mathrm{T}$, then every embedding $\cM \to \cN$ of $\Lang$-structures is an elementary embedding.
			\item For every $n$ and every pair $\mu, \nu \in \mathbb{S}_n(\mathrm{T})$, if $\psi(\mu) \leq \psi(\nu)$ for every $\inf$-formula $\psi$, then $\mu = \nu$.
			\item For every $\Lang$-formula $\phi$ and $\epsilon > 0$, there exists an inf-formula $\psi$ such that $|\phi - \psi| < \epsilon$ (on the appropriate sort or domain) for all models of $\mathrm{T}$.
		\end{enumerate}
	\end{lem}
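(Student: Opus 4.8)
The plan is to establish the cycle $(1)\Rightarrow(2)\Rightarrow(3)\Rightarrow(1)$. (The remaining implication $(2)\Rightarrow(1)$ is then automatic; it also has a one-line direct proof: an embedding $\iota\colon\cM\to\cN$ can only decrease the value of an $\inf$-formula, since restricting the infimum to the image $\iota(\cM)$ and using that embeddings preserve quantifier-free formulas gives $\psi^{\cN}(\iota\mathbf a)\le\psi^{\cM}(\mathbf a)$, so $(2)$ forces $\tp^{\cN}(\iota\mathbf a)=\tp^{\cM}(\mathbf a)$.) I expect $(1)\Rightarrow(2)$ to be the main obstacle, since that is where model completeness must be converted into a statement about types via a compactness argument with diagrams; the other two implications are comparatively soft.

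For $(1)\Rightarrow(2)$, suppose $\mathrm T$ is model complete and $\mu,\nu\in\mathbb{S}_n(\mathrm T)$ satisfy $\psi(\mu)\le\psi(\nu)$ for every $\inf$-formula $\psi$. Realize $\mu$ by a tuple $\mathbf a$ in a model $\cM\models\mathrm T$ and $\nu$ by a tuple $\mathbf b$ in a model $\cN\models\mathrm T$. Work in the language with a constant for every element of $\cM$ and of $\cN$, with the constants naming $\mathbf a$ identified with those naming $\mathbf b$, and consider $\mathrm T\cup\operatorname{ElemDiag}(\cM)\cup\operatorname{QFDiag}(\cN)$. Given a finite subset, interpret the $\cM$-constants in an elementary extension of $\cM$ (which satisfies $\mathrm T$ and $\operatorname{ElemDiag}(\cM)$); it remains to interpret the finitely many new $\cN$-constants $\mathbf e$ appearing in quantifier-free conditions $\rho_j(\mathbf b,\mathbf e)\approx w_j$, and bundling these into the $\inf$-formula $\psi(\mathbf x):=\inf_{\mathbf z}\max_j|\rho_j(\mathbf x,\mathbf z)-w_j|$ we get $\psi^{\cN}(\mathbf b)=0$ (witnessed by $\mathbf z=\mathbf e$), hence $\psi(\mu)\le\psi(\nu)=0$, hence $\psi^{\cM}(\mathbf a)=0$, which provides the required witnesses in an elementary extension of $\cM$. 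So every finite subset is satisfiable, and a model of the whole set is an elementary extension $\cM^{*}\succeq\cM$ carrying an embedding $j\colon\cN\hookrightarrow\cM^{*}$ with $j(\mathbf b)=\mathbf a$; model completeness makes $j$ elementary, so $\nu=\tp^{\cN}(\mathbf b)=\tp^{\cM^{*}}(\mathbf a)=\tp^{\cM}(\mathbf a)=\mu$.

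For $(2)\Rightarrow(3)$, the idea is a lattice form of the Stone--Weierstrass theorem on the compact Hausdorff space $\mathbb{S}_n(\mathrm T)$, on which each formula is a continuous function. The $\inf$-formulas form a cone of such functions containing the constants and closed under positive scalar multiples, sums, pointwise $\min$, and pointwise $\max$ — the only nontrivial closure being $\max$, which works after renaming the quantified variables apart since $\inf_{\mathbf z_1,\mathbf z_2}\max(\chi_1(\mathbf x,\mathbf z_1),\chi_2(\mathbf x,\mathbf z_2))=\max(\inf_{\mathbf z_1}\chi_1,\inf_{\mathbf z_2}\chi_2)$. (They are \emph{not} closed under products or negation, so the subalgebra form of Stone--Weierstrass does not apply.) By $(2)$, for distinct $\mu,\nu\in\mathbb{S}_n(\mathrm T)$ there are $\inf$-formulas $\psi,\psi'$ with $\psi(\mu)>\psi(\nu)$ and $\psi'(\mu)<\psi'(\nu)$, so for any target values $a,b$ a suitable $\alpha\psi+\beta\psi'+\gamma$ with $\alpha,\beta\ge0$ is an $\inf$-formula taking the value $a$ at $\mu$ and $b$ at $\nu$; together with the constants this yields the two-point interpolation hypothesis of the lattice Stone--Weierstrass theorem. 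That theorem then gives that every continuous function on $\mathbb{S}_n(\mathrm T)$, in particular every formula, lies in the uniform closure of the $\inf$-formulas, which is precisely $(3)$.

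For $(3)\Rightarrow(1)$, note that applying $(3)$ to $-\phi$ shows every formula is also uniformly approximable by $\sup$-formulas. Given an embedding $\iota\colon\cM\to\cN$ of models of $\mathrm T$, a formula $\phi$, and $\epsilon>0$, pick an $\inf$-formula $\psi_1$ and a $\sup$-formula $\psi_2$ within $\epsilon$ of $\phi$ on all models; since $\inf$-formulas do not increase and $\sup$-formulas do not decrease under embeddings, $\phi^{\cN}(\iota\mathbf a)\le\psi_1^{\cN}(\iota\mathbf a)+\epsilon\le\psi_1^{\cM}(\mathbf a)+\epsilon\le\phi^{\cM}(\mathbf a)+2\epsilon$ and symmetrically $\phi^{\cN}(\iota\mathbf a)\ge\phi^{\cM}(\mathbf a)-2\epsilon$, so letting $\epsilon\to0$ makes $\iota$ elementary. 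The genuinely delicate point in the whole argument is $(1)\Rightarrow(2)$: arranging the two-sided diagram correctly and recognizing that its finite satisfiability is exactly the hypothesized $\inf$-formula inequality; the remaining care is routine bookkeeping with continuous-logic compactness and with the several sorts of a metric structure.
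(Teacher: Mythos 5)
Your proof is correct, and its skeleton (the cycle $(3)\Rightarrow(1)$, $(1)\Rightarrow(2)$, $(2)\Rightarrow(3)$) matches the paper's, but two of the three legs are executed with different machinery. The $(3)\Rightarrow(1)$ step is identical to the paper's. For $(1)\Rightarrow(2)$, the paper works inside a single $\kappa^+$-saturated model: it realizes both types there, cuts down to a small elementary substructure containing the realizer of $\nu$ by downward L\"owenheim--Skolem, and uses saturation to reproduce its quantifier-free type over the realizer of $\mu$; you instead run a compactness argument on $\mathrm{T}\cup\operatorname{ElemDiag}(\cM)\cup\operatorname{QFDiag}(\cN)$ with the tuples identified. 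These are interchangeable standard techniques, and your key observation --- that finite (approximate) satisfiability of the combined diagram is \emph{exactly} the hypothesis $\psi(\mu)\le\psi(\nu)=0$ for the bundled $\inf$-formula $\psi$ --- is the same mechanism the paper uses via its formula $\psi(u_1,\dots,u_n)$. For $(2)\Rightarrow(3)$, the paper gives a self-contained Urysohn-style argument (separating closed sets by $[0,1]$-valued $\inf$-formulas and then a layer-cake sum $\frac1k\sum\psi_j$), which is in effect a proof of the lattice Stone--Weierstrass theorem in this setting; you instead cite the Kakutani--Stone lattice form directly, after verifying the two nontrivial hypotheses: closure of $\inf$-formulas under $\max$ and $\min$ (your identity $\inf_{\mathbf z_1,\mathbf z_2}\max(\chi_1,\chi_2)=\max(\inf_{\mathbf z_1}\chi_1,\inf_{\mathbf z_2}\chi_2)$ after renaming variables apart is correct), and two-point interpolation, which you correctly extract from the contrapositive of $(2)$ applied in both orders to get $\psi$ with $\psi(\mu)>\psi(\nu)$ and $\psi'$ with $\psi'(\mu)<\psi'(\nu)$. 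Your route is shorter and makes the structural reason for the approximation explicit (a point-separating lattice cone is dense), at the cost of invoking an external theorem; the paper's is longer but self-contained. No gaps.
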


	\begin{proof}
		(3) $\implies$ (1).  Assume that (3) holds.  Let $\cM \to \cN$ be an inclusion of models of $\mathrm{T}$.  Let $\phi$ be an $n$-variable formula and let $\mathbf{x} = (x_1,\dots,x_n)$ be a tuple of the appropriate sort from $\cM$.  Let $\epsilon > 0$.  Then by (3), there exist inf-formulas $\psi_1$ and $\psi_2$ such that $|\psi_1 - \phi| < \epsilon$ and $|\psi_2 - (-\phi)| < \epsilon$ in all models of $\mathrm{T}$.  In particular,
		\[
		\phi^{\cN}(\mathbf{x}) \leq \psi_1^{\cN}(\mathbf{x}) + \epsilon \leq \psi_1^{\cM}(\mathbf{x}) + \epsilon \leq \phi^{\cM}(\mathbf{x}) + 2 \epsilon,
		\]
		and symmetrically $-\phi^{\cN}(\mathbf{x}) \leq -\phi^{\cM}(\mathbf{x}) + 2 \epsilon$.  Since $\epsilon$ was arbitrary, we have  $\phi^{\cM}(\mathbf{x}) = \phi^{\cN}(\mathbf{x})$, so the embedding $\cM \to \cN$ is elementary.
		
		(1) $\implies$ (2).  Suppose $\mathrm{T}$ is model complete.  Let $\mu$ and $\nu$ be $n$-types satisfying the hypothesis for (2).  Let $\kappa$ be the density character of $\Lang$, and fix a $\kappa^+$-saturated model $\cM$ of $\mathrm{T}$.  Then $\cM$ contains some $\mathbf{x}$ with type $\mu$ and some $\mathbf{y}$ with type $\nu$.  By the downward L{\"o}wenheim-Skolem theorem \cite[Proposition 7.3]{BYBHU2008}, there exists an elementary substructure $\cN \preceq \cM$ containing $\mathbf{y}$ with density character at most $\kappa$.  Let $\mathbf{z}$ be a family indexed by some  set $I$ of cardinality $\kappa$ that is dense in $\cN$.  For every finite $F \subseteq I$, every $k\geq 1$,  and every $k$-tuple of  quantifier-free formulas $\phi_1, \dots, \phi_k$ in $n + |F|$ variables, consider the formula
		\[
		\psi(u_1,\dots,u_n) = \inf_{(v_i)_{i \in F}} \max_{j=1,\dots,k} |\phi_j(u_1,\dots,u_n, (v_i)_{i \in F}) - \phi_j^{\cM}(y_1,\dots,y_n,(z_i)_{i \in F})|.
		\]
		By assumption $\psi^{\cM}(x_1,\dots,x_n)\leq \psi^{\cM}(y_1,\dots,y_n)  = 0$.  Therefore, for any $\epsilon > 0$, there exists $(w_i)_{i \in F}$ such that $|\phi_j^{\cM}(y_1,\dots,y_n,(w_i)_{i \in F}) - \phi_j^{\cM}(x_1,\dots,x_n,(z_i)_{i \in F})| < \epsilon$ for all $j = 1, \ldots, k$.  By saturation, this implies that there exists a family  $\mathbf{w}$ indexed by $I$ in $\cM$ such that $(\mathbf{x},\mathbf{w})$ has the same quantifier-free type as $(\mathbf{y},\mathbf{z})$.  In particular, the substructure $\tilde{\cN}$ of $\cM$ generated by $(\mathbf{x},\mathbf{w})$ is isomorphic to the substructure $\cN$ generated by $(\mathbf{y},\mathbf{z})$.  So $\tilde{\cN}$ is a model of $\mathrm{T}$ and by model completeness the inclusion $\tilde{\cN} \to \cM$ is elementary.  Therefore,
		\[
		\tp^{\cM}(\mathbf{x}) = \tp^{\tilde{\cN}}(\mathbf{x}) = \tp^{\cN}(\mathbf{y}) = \tp^{\cM}(\mathbf{y}),
		\]
		and $\mu = \nu$ as desired.

		(2) $\implies$ (3).  Our argument uses point-set topology on $\mathbb{S}_n(\mathrm{T})$ and is motivated by Urysohn's lemma and the Stone--Weierstrass theorem.
		
		\textbf{Claim 1:} \emph{For every type $\mu$ and neighborhood $\mathcal{O}$ of $\mu$, there exist inf-formulas $\psi_1,\dots, \psi_k$ and $\delta > 0$ such that for types $\nu$, if $\psi_j(\nu) > \psi_j(\mu) - \delta$ for $j = 1, \dots k$, then $\nu \in \mathcal{O}$.}
		
		Fix $\mu$ and a neighborhood $\mathcal{O}$, and suppose for contradiction that no such inf-formulas exist.  Then for every $\delta > 0$ and every finite collection of inf-formulas $\psi_1$, \dots, $\psi_k$, there exists some type $\nu \in \mathbb{S}_n(\mathrm{T}) \setminus \mathcal{O}$ satisfying $\psi_j(\nu) > \psi_j(\mu) - \delta$ for $j = 1, \dots k$.  Since $\mathbb{S}_n(\mathrm{T}) \setminus \mathcal{O}$ is compact, there exists some $\nu \in \mathbb{S}_n(\mathrm{T}) \setminus \mathcal{O}$ satisfying $\psi(\nu) \geq \psi(\mu)$ for all inf-formulas $\phi$.  By (3), this implies $\nu = \mu$, which contradicts $\nu \in \mathbb{S}_n(\mathrm{T}) \setminus \mathcal{O}$.
		
		\textbf{Claim 2:} \emph{For every type $\mu$ and neighborhood $\mathcal{O}$, there exists an inf-formula $\psi$ taking values in $[0,1]$ such that $\psi(\mu)>0$ and, for all types $\nu$, if $\psi(\nu) > 0$, then $\nu \in \mathcal{O}$.}
		
		Let $\psi_1$, \dots, $\psi_k$ and $\delta$ be as in Claim 1, and set
		\[
		\psi = \min_j \max(\psi_j - \psi_j(\mu) + \delta, 0),
		\]
		which is an inf-formula by the monotonicity of $\max$ and $\min$.
		
		\textbf{Claim 3:} \emph{Let $\mathcal{E}_0$ and $\mathcal{E}_1$ be disjoint closed subsets of $\mathbb{S}_n(\mathrm{T})$.  Then there exists an inf-formula $\psi$ taking values in $[0,1]$ such that $\psi|_{\mathcal{E}_0} = 0$ and $\psi|_{\mathcal{E}_1} = 1$.}
		
		By Claim 2, for each $\mu \in \mathcal{E}_1$, there exists a nonnegative inf-formula $\psi_\mu$ such that $\psi_\mu(\mu)>0$ and if $\psi_\mu(\nu) > 0$, then $\nu \in \mathbb{S}_n(\mathrm{T}) \setminus \mathcal{E}_0$.  Let $\mathcal{O}_\mu = \{\nu: \psi_\mu(\nu) > 0\}$.  These neighborhoods form an open cover of the compact set $\mathcal{E}_1$, and hence $\mathcal{E}_1$ can be covered by finitely many of these neighborhoods, say $\mathcal{O}_{\mu_1}$, \dots, $\mathcal{O}_{\mu_k}$.  Thus, $\sum_{j=1}^k \psi_j$ is strictly positive on $\mathcal{E}_1$ and attains some minimum $\delta > 0$ on this set.  Then
		\[
		\psi = \min \left( 1, \frac{1}{\delta} \sum_{j=1}^k \psi_j \right)
		\]
		is an inf-formula with the desired properties.
		
		\textbf{Claim 4:} \emph{For every formula $\phi$ and $\epsilon > 0$, there exists an inf-formula $\psi$ such that $|\phi - \psi| < \epsilon$ in every model of $\mathrm{T}$.}
		
		By affine transformation, assume without loss of generality that $0 \leq \phi \leq 1$.  Let $k \in \N$ with $1/k < \epsilon$.  For $j = 1$, \dots, $k$,  the sets $\{\phi \leq (j-1)/k\}$ and $\{ \phi \geq j/k\}$ are disjoint and closed in $\mathbb{S}_n(\mathrm{T})$.  By Claim 3, there exists an inf-formula $\psi_j$ such that $0 \leq \psi_j \leq 1$ and for $\nu \in \mathbb{S}_n(\mathrm{T})$,
		\[
		\phi(\nu) \leq (j-1)/k \implies \psi_j(\nu) = 0, \qquad \phi(\nu) \geq j/k \implies \psi_j(\nu) = 1.
		\]
		Let
		\[
		\psi = \frac{1}{k} \sum_{j=1}^k \psi_j.
		\]
		Then for types $\nu$, if $\phi(\nu) \in [(j-1)/k,j/k]$, then $\psi_1(\nu)$, \dots, $\psi_{j-1}(\nu)$ are $1$ and $\psi_{j+1}(\nu)$, \dots, $\psi_k(\nu)$ are zero, so that $\psi(\nu) \in [(j-1)/k,j/k]$.  Hence, $|\phi(\nu) - \psi(\nu)| \leq 1/k < \epsilon$ for all $\nu \in \mathbb{S}_n(\mathrm{T})$ as desired.
	\end{proof}
	
	\subsection*{Funding}
	
	IF and DJ were partially supported by the Discovery grant ``Logic and $\mathrm{C}^*$-algebras'' from the Natural Sciences and Engineering Research Council (Canada). JP was partially supported by the National Science Foundation (US), grant DMS-2054477. DJ acknowledges support from the Danish Independent Research Fund, grant 1026-00371B, and the Horizon Europe Marie Sk{\l}odowksa-Curie Action FREEINFOGEOM, grant 101209517.

	\bibliographystyle{abbrv}
	\bibliography{QEMC}

\end{document}